\PassOptionsToPackage{dvipsnames}{xcolor}
\documentclass[twoside]{article}
\usepackage[T1]{fontenc}
\usepackage{geometry}
	\geometry{a4paper,heightrounded,left=1in,right=1in,top=1.3in,bottom=1.3in}

\usepackage{amsmath,amsthm,amsfonts,amssymb}
\usepackage{bm}
\usepackage{todonotes}
\usepackage{proof}
\usepackage{authblk}
\usepackage{mathtools}
\usepackage{tikz,tikz-cd,tikzpeople}
\usepackage{tikzit}				% String diagrams

% TiKZ style file content generated by TikZiT
% Node styles
\tikzstyle{morphism}=[fill=white, draw=black, shape=rectangle]
\tikzstyle{medium box}=[fill=white, draw=black, shape=rectangle, minimum width=0.7cm, minimum height=0.7cm]
\tikzstyle{large morphism}=[fill=white, draw=black, shape=rectangle, minimum width=1.7cm, minimum height=1cm]
\tikzstyle{bn}=[fill=black, draw=black, shape=circle, inner sep=1.5pt]
\tikzstyle{state}=[fill=white, draw=black, regular polygon, regular polygon sides=3, minimum width=0.8cm, shape border rotate=180, inner sep=0pt]
\tikzstyle{medium state}=[fill=white, draw=black, regular polygon, regular polygon sides=3, minimum width=1.3cm, inner sep=0pt, shape border rotate=180]
\tikzstyle{large state}=[fill=white, draw=black, regular polygon, regular polygon sides=3, minimum width=2.2cm, shape border rotate=180, inner sep=0pt]
\tikzstyle{wide state}=[fill=white, draw=black, shape=isosceles triangle, minimum width=0.8cm, shape border rotate=270, inner sep=1.4pt, minimum height=0.5cm, isosceles triangle apex angle=80]
\tikzstyle{blue morphism}=[fill=white, draw={rgb,255: red,15; green,0; blue,150}, shape=rectangle, text={rgb,255: red,15; green,0; blue,150}, tikzit category=blue]
\tikzstyle{blue state}=[fill=white, draw={rgb,255: red,15; green,0; blue,150}, shape=circle, regular polygon, regular polygon sides=3, minimum width=0.8cm, shape border rotate=180, inner sep=0pt, text={rgb,255: red,15; green,0; blue,150}, tikzit category=blue]
\tikzstyle{blue node}=[fill={rgb,255: red,15; green,0; blue,150}, draw={rgb,255: red,15; green,0; blue,150}, shape=circle, tikzit category=blue, inner sep=1.5pt]
\tikzstyle{blue}=[text={rgb,255: red,15; green,0; blue,150}, tikzit draw={rgb,255: red,191; green,191; blue,191}, tikzit category=blue, tikzit fill=white, inner sep=0mm]
\tikzstyle{blue wide state}=[fill=white, draw={rgb,255: red,15; green,0; blue,150}, text={rgb,255: red,15; green,0; blue,150}, shape=isosceles triangle, minimum width=0.8cm, shape border rotate=270, inner sep=1.4pt, minimum height=0.5cm, isosceles triangle apex angle=80]
\tikzstyle{white morphism}=[fill=white, draw=white, shape=rectangle, tikzit draw={rgb,255: red,139; green,139; blue,139}]
\tikzstyle{horiz state}=[fill=white, draw=black, regular polygon, regular polygon sides=3, minimum width=1cm, shape border rotate=90, inner sep=0pt]
% Edge styles
\tikzstyle{arrow}=[->]
\tikzstyle{blue arrow}=[-, draw={rgb,255: red,15; green,0; blue,150}, tikzit category=blue]
\tikzstyle{dashed box}=[-, dashed]
\tikzstyle{mapsto}=[{|->}]
\tikzstyle{protected}=[-, preaction={line width=1.8pt,white,draw}]
\tikzstyle{double wire}=[-, draw, line width=0.8pt, white, preaction={-, draw, line width=1.6pt}]
\tikzstyle{protected double wire}=[-, draw, line width=0.8pt, white, preaction={-, draw, line width=1.6pt, preaction={line width=3pt,white,draw}}]
\tikzstyle{triple wire}=[-, draw, line width=0.4pt, preaction={-, draw, line width=2pt, white, preaction={-, draw, line width=2.8pt}}]
\tikzstyle{blue double arrow}=[-, draw, line width=0.8pt, white, preaction={-, draw={rgb,255: red,15; green,0; blue,150}, tikzit category=blue, line width=1.6pt}]
\tikzstyle{d-wire1 plate}=[-, draw={red!20!white}, line width=0.8pt, preaction={-, line width=1.6pt, draw}]
\tikzstyle{d-wire2 plate}=[-,draw={red!32!white}, line width=0.8pt, preaction={-, line width=1.6pt, draw}]
\tikzstyle{dotted_plate}=[-, densely dotted, draw=red, fill opacity=0.4, fill={red!50!white}, rounded corners, tikzit fill={rgb,255: red,255; green,194; blue,195}]
\tikzstyle{twire1}=[-, draw, line width=0.4pt, preaction={-, draw, line width=2pt, red!20!white, preaction={-, draw, line width=2.8pt}}]
\tikzstyle{twire2}=[-, draw, line width=0.4pt, preaction={-, draw, line width=2pt, red!32!white, preaction={-, draw, line width=2.8pt}}]
\tikzstyle{purple line}=[-, draw={rgb,255: red,120; green,0; blue,120}]
\tikzstyle{orange line}=[-, draw={rgb,255: red,255; green,100; blue,0}]
\tikzstyle{red line}=[-, draw={rgb,255: red,150; green,0; blue,2}]
\tikzstyle{blue line}=[-, draw={rgb,255: red,15; green,0; blue,150}]
\tikzstyle{red double arrow}=[-, draw, line width=0.8pt, white, preaction={-, line width=1.6pt, draw={rgb,255: red,150; green,0; blue,2}, tikzit category=red}]
\tikzstyle{blue double arrow}=[-, draw, line width=0.8pt, white, preaction={-, draw={rgb,255: red,15; green,0; blue,150}, tikzit category=blue, line width=1.6pt}]
\tikzstyle{purple double line}=[-, draw, line width=0.8pt, white, preaction={-, line width=1.6pt, draw={rgb,255: red,120; green,0; blue,120}}]
\tikzstyle{orange double line}=[-, draw, line width=0.8pt, white, preaction={-, line width=1.6pt, draw={rgb,255: red,255; green,100; blue,0}}]
\tikzstyle{protected red double arrow}=[-, draw, line width=0.8pt, white, preaction={-, line width=1.6pt, draw={rgb,255: red,150; green,0; blue,2}, tikzit category=red, preaction={line width=3pt,white,draw}}]
\tikzstyle{protected blue double arrow}=[-, draw, line width=0.8pt, white, preaction={-, draw={rgb,255: red,15; green,0; blue,150}, tikzit category=blue, line width=1.6pt, preaction={line width=3pt,white,draw}}]
\tikzstyle{protected purple double line}=[-, draw, line width=0.8pt, white, preaction={-, line width=1.6pt, draw={rgb,255: red,120; green,0; blue,120}, preaction={line width=3pt,white,draw}}]
\tikzstyle{protected orange double line}=[-, draw, line width=0.8pt, white, preaction={-, line width=1.6pt, draw={rgb,255: red,255; green,100; blue,0},preaction={line width=3pt,white,draw}}]

\usetikzlibrary{external,arrows,decorations.pathreplacing,calligraphy,calc,positioning} %for brace
\usepackage{enumitem}
	\setlist[enumerate]{label=(\roman*)}
	\setlist[enumerate,2]{label=(\alph*)}
\usepackage{cite}
\usepackage{xparse}				% Needed for \newfaktor command.
\usepackage{caption}
\usepackage{subcaption}
\usepackage{cancel}
\usepackage{bigints}

\definecolor{myurlcolor}{rgb}{0,0,0.3}
\definecolor{mycitecolor}{rgb}{0,0.3,0}
\definecolor{myrefcolor}{rgb}{0.3,0,0}
\usepackage[pagebackref,draft=false]{hyperref}
\hypersetup{colorlinks,
linkcolor=myrefcolor,
citecolor=mycitecolor,
urlcolor=myurlcolor}

\usepackage[capitalize,noabbrev]{cleveref}
\newcommand{\sref}[2]{\hyperref[#2]{#1~\ref{#2}}}	% A manual way to link theorems (and else) in a way so that links work in our figure 3

\newtheorem*{theorem*}{Theorem}
\newtheorem{theorem}{Theorem}[section]
\newtheorem{proposition}[theorem]{Proposition}
\newtheorem{lemma}[theorem]{Lemma}
\newtheorem{corollary}[theorem]{Corollary}
\newtheorem{definition}[theorem]{Definition}

\newtheorem{notation}[theorem]{Notation}
\theoremstyle{definition}
\newtheorem{example}[theorem]{Example}
\newtheorem{remark}[theorem]{Remark}

\newtheorem{assumption}[theorem]{Assumption}

\newcommand{\I}{\mathcal{I}}
\newcommand{\J}{\mathcal{J}}
\newcommand{\A}{\mathcal{A}}
\newcommand{\N}{\mathbb{N}}

\newcommand{\R}{\mathbb{R}}

\newcommand{\dr}{\mathrm{d}}
\newcommand{\eps}{\varepsilon}
	% finite power set
		% invariant subset

	% group of finite permutations on a set
	% having a permutation (1st argument) act on a Kolmogorov power of an object (2nd argument)
	% probability
	% expectation value
	% variance

		% arbitrary cat (diagram shape)

		% Kleisli category construction

		% subcat of injective maps
\newcommand{\id}{\mathrm{id}} 		% identity
		% adjunctions

\newcommand{\tensor}{\otimes}
\newcommand{\tail}{\mathrm{tail}}
	% category of partial maps, as in \cite{cockettlack2002partialmaps}
		% domain

\newcommand{\compl}[1]{\overline{#1}} %complement of a set for AH theorem
\newcommand{\complsm}[1]{\widehat{#1}} %complement of a singleton for AH theorem
\makeatletter %\bigperp, for conditional independence between multiple outputs.
\newcommand{\bigp@rp}[2]{%
  \vcenter{%
    \m@th\hbox{\scalebox{\ifx#1\displaystyle1.9\else1.3\fi}{$#1\perp$}}
  }%
}
\newcommand{\bigperp}{%
  \mathop{\mathpalette\bigp@rp\relax}%
  \displaylimits
}
\makeatother
\newcommand{\largeperp}{%
  \mathrel{\scalebox{1.4}{$\perp$}}%
}
\newcommand{\largemid}{%
  \mathrel{\scalebox{1.35}{$\vert$}}%
}

\newcommand{\ph}{\mathord{\rule[-0.05em]{0.6em}{0.05em}}}		% Argument placeholder
		% Scriptstyle argument placeholder

\DeclareMathOperator{\NonDesc}{\mathsf{NonDesc}}

\DeclareMathOperator{\Past}{\mathsf{Past}}
\DeclareMathOperator{\In}{\mathsf{in}}
\DeclareMathOperator{\Out}{\mathsf{out}}
\DeclareMathOperator{\timing}{\mathfrak{t}}

\newcommand{\tailcond}[1]{{#1}_{|\mathrm{tail}}}
\definecolor{parametrized}{RGB}{15,0,150}
\newcommand{\param}[1]{{\color{parametrized}#1}}

\newcommand{\newterm}[1]{\textbf{#1}}

\newcommand{\cC}{\mathsf{C}}		% Markov cat
	% deterministic morphisms
\newcommand{\samp}{\mathsf{samp}}	% sampling map

\newcommand{\Stoch}{\mathsf{Stoch}}
\newcommand{\BorelStoch}{\mathsf{BorelStoch}}
\newcommand{\AnaStoch}{\mathsf{AnaStoch}}
\newcommand{\UnivStoch}{\mathsf{UnivStoch}}
\newcommand{\as}[1]{% 					almost surely
		\def\relstate{#1}%
		\ifx\relstate\empty
		  \text{a.s.}%
		\else
		  {#1\text{-a.s.}}%
		\fi
	}
\newcommand{\ase}[1]{=_{#1\text{-a.s.}}}					% almost surely equal

\DeclareMathOperator{\cop}{copy}
\DeclareMathOperator{\discard}{del}
\newcommand{\comp}{ 		% Command for sequential composition
	\mathchoice{\,}{\,}{}{} 	% First two are for displaystyle and text style, the remaining two for smaller math.
}

 % extended morphism

	\providecommand{\given}{}			% Just to make sure the \given command exists.
	\newcommand{\SetSymbol}[1][]{%
		\nonscript\;\,#1\vert
		\allowbreak
		\nonscript\;\,
		\mathopen{}
	}
	\DeclarePairedDelimiterX{\Set}[1]{\{}{\}}{%
		\renewcommand{\given}{\SetSymbol[\delimsize]}
		#1
	}
	\makeatletter
		\let\oldSet\Set
		\def\Set{\@ifstar{\oldSet}{\oldSet*}}
	\makeatother

	\DeclarePairedDelimiterX{\Family}[1]{(}{)}{%
		\renewcommand{\given}{\SetSymbol[\delimsize]}
		#1
	}
	\makeatletter
		\let\oldFamily\Family
		\def\Family{\@ifstar{\oldFamily}{\oldFamily*}}

	\newsavebox{\numbox}%
	\newsavebox{\slashbox}%
	\newsavebox{\denbox}%
	\newlength{\slashlength}%
	\newlength{\faktorscale}%
	\DeclareDocumentCommand{\newfaktor}{m O{0.35} m O{-0.35}}{% \newfaktor{#1}[#2]{#3}[#4] -> #1/#3
		\savebox{\numbox}{\ensuremath{#1}}% Store numerator
		\savebox{\slashbox}{\ensuremath{\diagup}}% Store slash /
		\savebox{\denbox}{\ensuremath{#3}}% Store denominator
		\setlength{\faktorscale}{0.5\ht\numbox+0.5\ht\denbox}%
		\setlength{\slashlength}{2pt+0.8\faktorscale+#2\faktorscale-#4\faktorscale}%
		\raisebox{#2\ht\slashbox}{\usebox{\numbox}}% Numerator
		\mkern-2mu%
		\rotatebox{-30}{\rule[#4\ht\denbox]{0.4pt}{\slashlength}}% tilted rule as a slash
		\mkern9mu%
		\hspace{-0.44\slashlength}%
		\raisebox{#4\ht\denbox}{\usebox{\denbox}}% Denominator
	}
	\DeclareDocumentCommand{\linefaktor}{m O{0.08} m O{-0.08}}{% \newfaktor{#1}[#2]{#3}[#4] -> #1/#3
		\savebox{\numbox}{\ensuremath{#1}}% Store numerator
		\savebox{\slashbox}{\ensuremath{\diagup}}% Store slash /
		\savebox{\denbox}{\ensuremath{#3}}% Store denominator
		\setlength{\faktorscale}{0.5\ht\numbox+0.5\ht\denbox}%
		\setlength{\slashlength}{0.2\faktorscale+0.8\baselineskip}%
		\raisebox{#2\ht\slashbox}{\usebox{\numbox}}% Numerator
		\mkern-1mu%
		\raisebox{-0.8pt}{%
			\rotatebox{-30}{\rule[#4\ht\denbox]{0.4pt}{\slashlength}} % tilted rule as a slash
		}%
		\mkern-1mu%
		\hspace{-0.25\slashlength}%
		\raisebox{#4\ht\denbox}{\usebox{\denbox}}% Denominator
	}
	
 %multicolumn and multirows in tikzcd

\title{The Aldous--Hoover Theorem in Categorical Probability}

\author[1]{Leihao Chen}
\author[2]{Tobias Fritz}
\author[2]{Tom{\'a}{\v{s}} Gonda}
\author[3]{Andreas Klingler}
\author[ ]{Antonio Lorenzin}

\affil[1]{Korteweg-de Vries Institute for Mathematics, University of Amsterdam, Netherlands}
\affil[2]{Department of Mathematics, University of Innsbruck, Austria}
\affil[3]{Faculty of Mathematics, University of Vienna, Austria}

\usepackage{fancyhdr}
\pagestyle{fancy}

\fancyhf{}
\makeatletter
\newcommand{\runtitle}{The Aldous--Hoover Theorem in Categorical Probability}	% title repeated without line break
\makeatother
\fancyhead[OL]{\nouppercase\leftmark}
\fancyhead[ER]{\textsc\runtitle}
\fancyhead[EL,OR]{\thepage}

\begin{document}

\newgeometry{top=2cm,bottom=2cm,left=1in,right=1in} % make sure that the TOC...
\maketitle

\begin{abstract}
	The Aldous-Hoover Theorem concerns an infinite matrix of random variables whose distribution is %row-column exchangeable, which means 
		invariant under finite permutations of rows and columns. 
	It states that, up to equality in distribution, each random variable in the matrix can be expressed as a function only depending on four key variables: 
	one common to the entire matrix, one that encodes information about its row, one that encodes information about its column, and a fourth one specific to the matrix entry.

	We state and prove the theorem within a category-theoretic approach to probability, namely the theory of Markov categories. 
	This makes the proof more transparent and intuitive when compared to measure-theoretic ones.
	A key role is played by a newly identified categorical property, the Cauchy--Schwarz axiom, which also facilitates a new synthetic de Finetti Theorem.
	% approach allows for interesting insights into the underlying structure relevant to the proof, which could be generalized to different probabilistic settings. 
	
	We further provide a variant of our proof using the ordered Markov property and the d-separation criterion, both generalized from Bayesian networks to Markov categories.
	We expect that this approach will facilitate a systematic development of more complex results in the future, such as categorical approaches to hierarchical exchangeability.
\end{abstract}

%\ak{TODO: Add or check your affiliations and check your fundings in the acknowledgments. Are you happy with putting the acknowledgments as part of the Intro? I've never seen that.}

\tableofcontents
\restoregeometry % ... fits onto the title page

\section{Introduction}\label{sec:intro}
	
	The celebrated de Finetti Theorem is concerned with \newterm{exchangeable} probability measures $p$ on a countably infinite power $X^\N$ of a sample space $X$, where exchangeability means that $p$ is invariant under finite permutations of the factors.
	The theorem states that every such $p$ can be written as a mixture of iid measures.
%	\togo{I want to mention exchangeability in the context of de Finetti, so that the connection to AH is more smooth, and is clear that we consider just a different type of exchangeability in this case. Can you add it here in a way that you are happy with?}
%	\togo{On the other hand, I would remove suitably nice here. It begs the question of what this means, and distracts from the point we are trying to make. That is why I would prefer to say ``sample space $X$'' because it is vague enough that we don't piss off people who care about every sentence being technically correct when we omit some of the details.}
%	\tob{I think it's common and sensible to leave such details to the main text, and ``sample space'' seems to do pretty much the same thing. So I'm almost indifferent and reverted back}
	More precisely, for every exchangeable $p$ there exist a measure $q$ on some other space $A$ and a Markov kernel $f \colon A \to X$ such that
	\begin{equation}\label{eq:definetti_intro}
		\tikzfig{de_finetti_standard}
	\end{equation}
%	\tob{[deleted]}
%	\togo{I am confused by what you mean by ``otherwise don't use them in string diagrams''.
%	The reason I had the superscripts was precisely because we use them in similar string diagrams.
%	In particular, I want to make as close connection to the AH statement in the introduction, where we do use this and so I think to understand the AH notation, it is useful to be consistent and use the same notation for dF.}
%	\tob{Ah right. Okay, I've put them back in, but then we should also use them in \cref{cor:definetti}, and there may be other instances where we currently don't. Also maybe we want to switch to superscripts in~\eqref{eq:plate_definition} after all? Then we have an arbitrary family indexed by superscripts there, and the constant family with a dummy superscript otherwise}
	This equation is written in the string-diagrammatic notation of categorical probability.\footnotemark
	\footnotetext{The objects $X^1, X^2, \dots$ on the right-hand side are all equal to $X$, but we use labels in superscripts to indicate how these match up with the factors of the object $X^\N = \bigotimes_{n \in \N} X$ on the left-hand side.
	This notation will become more important in the two-dimensional case of the Aldous--Hoover Theorem discussed below.
	However, we omit the superscripts in other cases when the matching of outputs is unambiguous.}
	It can be read as follows: 
	Sampling a random sequence $(x_n) \in X^\N$ from $p$ can be achieved by first sampling $a \in A$ from $q$ and then, for each $n \in \N$, sampling $x_n \in X$ from the probability measure $f( \ph | a)$.
%	\togo{I would appreciate if I don't always have to replace the notations for empty argument. We have macros for this, namely ``$\backslash$ph''. We also generally use ``$\backslash$colon'' instead of ``:'' for morphism/function type declination, it would be nice if we use a consistent notation, because it always adds extra work when we don't.}
%	\tob{Sorry, I sometimes forget which macros are used in which paper when working on several things in parallel. I've tried to use \texttt{\textbackslash{}colon} throughout but didn't remember \texttt{\textbackslash{}ph} \dots}
	%where $X^i$ are different instances of the same sample space $X$.
%	In our string-diagrammatic notation (see \cref{sec:markov_cats}), an iid measure can be written as
%	\begin{equation}
%		\tikzfig{iid}
%	\end{equation}
%	if the distribution of an infinite sequence $(x_n)_{n \in \N}$ of random variables is invariant under finite permutations, then the variables are conditionally independent relative to a latent variable $u$ (which can be taken to be the tail of the sequence).
%	In other words, the variables can be taken to be a function of $u$ and iid random variables $v_n$,
%	\tob{mentioning the functional form has a couple of issues. Maybe we explain the theorem directly in terms of the string diagram? It seems to be the best anyway by being both intuitive and formally precise}
%	We leave it understood that 
	This recovers the traditional formulation of the de Finetti theorem in terms of conditional iid sequences of random variables \cite[Theorem 1.1]{kallenberg2005symmetries}. %, where the latent variable in our formulation is $y$.

	This result is among the most foundational theorems in probability theory with substantial technical and philosophical significance.
	For example, it plays central roles in Bayesian non-parametric statistics and in the longstanding debate on the subjective vs.\ objective view of probability \cite{oneill2009exchangeability}.
%	\tob{This should be backed up by a ref. How about \href{https://onlinelibrary.wiley.com/doi/full/10.1111/j.1751-5823.2008.00059.x}{onlinelibrary.wiley.com/doi/full/10.1111/j.1751-5823.2008.00059.x}?}

	In an earlier work \cite{fritz2021definetti}, the above formulation of the de Finetti Theorem is stated and proven in a purely \emph{synthetic} fashion.
	This means that neither the statement nor the proof uses measure theory.
	Instead, both live entirely in the categorical framework of probability theory based on \newterm{Markov categories} \cite{chojacobs2019strings,fritz2019synthetic}, which axiomatizes how probabilities \emph{behave} instead of defining what probabilities \emph{are}.
	This formalism also can be instantiated on other theories of uncertainty \cite{fritz2024causal}.
	Moreover, its string-diagrammatic language provides an intuitive account of probabilistic processes and the (in)dependencies present.
%	Specifically, the approach employed there uses the language of \newterm{Markov categories}, which offer a categorical framework for probability theory.
	Using extra axioms with a probabilistic interpretation (e.g.\ conditionals, representability, and others) on top of the definition of Markov category gives us the structure needed to express and prove categorical versions of classical results in probability theory and statistics \cite{fritzrischel2019zeroone, fritz2023representable, fritz2021definetti, fritz2022dseparation, fritz2024hidden,moss2022ergodic,ackerman2024randomgraphs,dilavore2023evidential,pvb2024martingales}.
	%By axiomatizing categories whose morphisms are abstract versions of Markov kernels, it provides a clean and comprehensive categorical setting that comprises all the structure needed to express and prove various classical results in probability theory and statistics~\cite{fritzrischel2019zeroone, fritz2021definetti, fritz2023representable, fritz2022dseparation, fritz2024hidden,moss2022ergodic}.
	% For example for the Synthetic de Finetti Theorem, the only measure-theoretic input comes in the form of ensuring that the Markov category $\BorelStoch${\,---\,}which has standard Borel spaces as objects and Markov kernels as morphisms{\,---\,}satisfies the relevant extra axioms.
	% Indeed, the only measure-theoretic input to the proof of the Synthetic de Finetti Theorem comes in the form of ensuring that the Markov category $\bm{\BorelStoch}${\,---\,}which has standard Borel spaces as objects and Markov kernels as morphisms{\,---\,}satisfies some of these axioms. % m that abstracting away from measure-theoretic details.
%	This categorical de Finetti Theorem recovers 
%	The proof of these results, including the Synthetic de Finetti Theorem, proceeds as general arguments and can be understood intuitively thanks to their string-diagrammatic nature.
%	\togo{I still want to say sth like ``The proof then proceeds as a general argument and can be understood intuitively thanks to its string-diagrammatic nature.'' here.} %language native to Markov categories.
%	\tob{I'm not sure what that adds that hasn't already been said, but I've included it}
	For example, the synthetic version of the de Finetti Theorem recovers the traditional statement when instantiated in the Markov category $\bm{\BorelStoch}$.
	Moreover, its application to other Markov categories yields new concrete versions of the theorem \cite[Corollary 5.47]{forre2021quasimeasurable}.

	The aim of this paper is to formulate and prove a synthetic version of the \newterm{Aldous--Hoover Theorem}.
%	However, in many situations there are fewer symmetries present. 
%	For example, random graph models are often assumed to be invariant under vertex permutations, but not under arbitrary permutations of edges.
%	There are various existing results extending the de Finetti Theorem to characterize such \emph{partially exchangeable} measures.
	Its classical version \cite{aldous1981representations,aldous1985exchangeability,hoover1982row} achieves a similar feat as the de Finetti Theorem, but for an infinite \emph{matrix} of random variables rather than a sequence.
	Exchangeability of the sequence is replaced by \newterm{row-column exchangeability} (also known as separate exchangeability), which means that the distribution is invariant under finite permutations of its rows and columns of the matrix.
	Using this assumption, the theorem concludes a plethora of conditional independence relations.
	These imply that the joint distribution can be decomposed in a way that is analogous to the de Finetti Theorem.
%	The matrix entries $x_{ij}$ satisfy
%	\begin{equation}
%		\label{eq:AH_standard}
%		x_{ij} \sim f(u, v_i, w_j, t_{ij}),
%	\end{equation}
%	where the latent variables are all independent, with a distribution which does not depend on the indices.
	%More concretely, $u$ can be taken to be the tail of the whole matrix, $v_i$ encodes additional information related to the tail of the $i$-th row, and similarly $w_j$ for the $j$-th column.
	
	Before presenting the decomposition abstractly, let us offer an intuitive explanation.
	Imagine two countably infinite groups of chess players consisting of Americans and Europeans respectively.
	They play a round-robin tournament, where each American plays against each European exactly once, so that the outcomes can be listed as in \cref{fig:rough_ah}.
%	To test the scoring system, the organizer $O$ wants to simulate what this table might look like.
	Suppose that we are interested in generating samples of what such a table might look like, provided that the only thing we know about each player is their membership in the respective team.
	So we intend to sample from a probability distribution over outcome tables which is invariant under permuting a finite number of Americans (or of Europeans), and hence row-column exchangeable.
%	\tob{I've rephrased this without ``probabilistic process'' because of the clash with ``stochastic process''. (There was an earlier occurrence that seems less problematic, but here I'd really like to avoid this term.)} 
%	This may happen, for example, if this is the first tournament for all the players involved.
%	This precisely means that the distribution $p$ over outcome tables resulting from the process is row-column exchangeable.
%	\footnote{In the literature, such a $p$ is also called \emph{separately exchangeable}.} 
%	\togo{Is separately exchangeable a better term, to highlight its distinction from joint exchangeability?
%	The name row-column exchangeable doesn't make this clear imo.}
%	\anto{What is joint exchangeability? To me, our name communicates that not all finite permutations are allowed. Also, row-column is very explicit about the relevant subdivision, while ``separately'' is not: it separates with respect to what?}
%	\togo{There are different subgroups of the permutation group of $\N \times \N$ that one can consider, separately exchangeable measures are invariant under
%	\begin{equation}
%		\Set{\sigma \times \tau \given \sigma, \tau \in S_\N }
%	\end{equation}
%	while jointly exchangeable measures are invariant under
%	\begin{equation}
%		\Set{\sigma \times \sigma \given \sigma \in S_\N }.
%	\end{equation}
%	The name row-column exchangeable does not suggest to me one or the other, which is why I think it is not ideal.}

	In such a situation, the Aldous--Hoover Theorem says the following: 
	One can randomly associate a number{\,---\,}let us call it the asymptotic Elo rating{\,---\,}to each player, such that the outcome of each game depends (probabilistically) only on the asymptotic Elo ratings of the two players together with possible external factors (e.g.\ location of the tournament, weather, etc.).
	Moreover, according to the theorem, the asymptotic Elo ratings similarly depend on the external factors only (possibly in a different way for each team).
%	An illustration of this situation can be found in \cref{fig:rough_ah}.
	
	\begin{figure}[ht]
	\centering
	\begin{tikzpicture}[scale=0.55]
		\tikzsetfigurename{rough_ah_2}
		\node [person,minimum size=0.4cm] (A) at (-0.5,-1.5) {};
		\node [alice,minimum size=0.4cm] (B) at (1,0) {};
		\node [bob,minimum size=0.4cm] (C) at (2,0) {};
		\node [charlie,minimum size=0.4cm] (D) at (3,0) {};
		\node (E) at (4.5,0) {$\dots$};
		\node (eu) at (2.7,1.2) {Europeans};
		\node (ElA) at (6.5,0) {Elo};
		\node [businessman, female, minimum size=0.4cm] (A1) at (-0.5,-2.7) {};
		\node [criminal, minimum size=0.4cm](A2) at (-0.5,-3.9){};
		\node (A3) at (-0.5,-5.1) {$\vdots$};
		\node[rotate=90] (am) at (-1.7,-3.5) {Americans};
		\node[rotate=90] (ElE) at (-0.5,-7.1) {Elo};
		\draw (0.25,0.8)--(0.25,-7.8);
		\draw (5.5,0.8) -- (5.5,-7.8);
		\draw (-1.25,-0.7)--(7.3,-0.7);
		\draw (-1.25,-6.1)--(7.3,-6.1);
		\node[draw, draw opacity=0,minimum size=0.5cm] (B1) at (1,-1.5) {E};
		\node (C1) at (2,-1.5) {E};
		\node (D1) at (3,-1.5) {A};
		\node (E1) at (4.5,-1.5) {$\dots$};
		\node[draw, draw opacity=0,minimum size=0.5cm] (ElA1) at (6.5,-1.5) {1568};
		\node (B2) at (1,-2.7) {A};
		\node (C2) at (2,-2.7) {A};
		\node (D2) at (3,-2.7) {A};
		\node (E2) at (4.5,-2.7) {$\dots$};
		\node (ElA2) at (6.5,-2.7) {1203};
		\node (B3) at (1,-3.9) {E};
		\node (C3) at (2,-3.9) {E};
		\node (D3) at (3,-3.9) {D};
		\node[draw, draw opacity=0,minimum size=0.5cm] (E3) at (4.5,-3.9) {$\dots$};
		\node (ElA3) at (6.5,-3.9) {1752};
		\node[draw, draw opacity=0,minimum size=0.5cm] (B4) at (1,-5.1) {$\vdots$};
		\node (C4) at (2,-5.1) {$\vdots$};
		\node[draw, draw opacity=0,minimum size=0.5cm] (D4) at (3,-5.1) {$\vdots$};
		\node[draw, draw opacity=0,minimum size=0.7cm] (E4) at (4.5,-5.1) {$\ddots$};
		\node (ElA4) at (6.5,-5.1) {$\vdots$};
		\node[rotate=90] (ElE1) at (1,-7.1) {1375};
		\node[rotate=90] (ElE2) at (2,-7.1) {1600};
		\node[rotate=90] (ElE3) at (3,-7.1) {1804};
		\node (ElE4) at (4.5,-7.1) {$\dots$};
		\node (Ext) at (6.5,-7.1) {\includegraphics[scale=0.005]{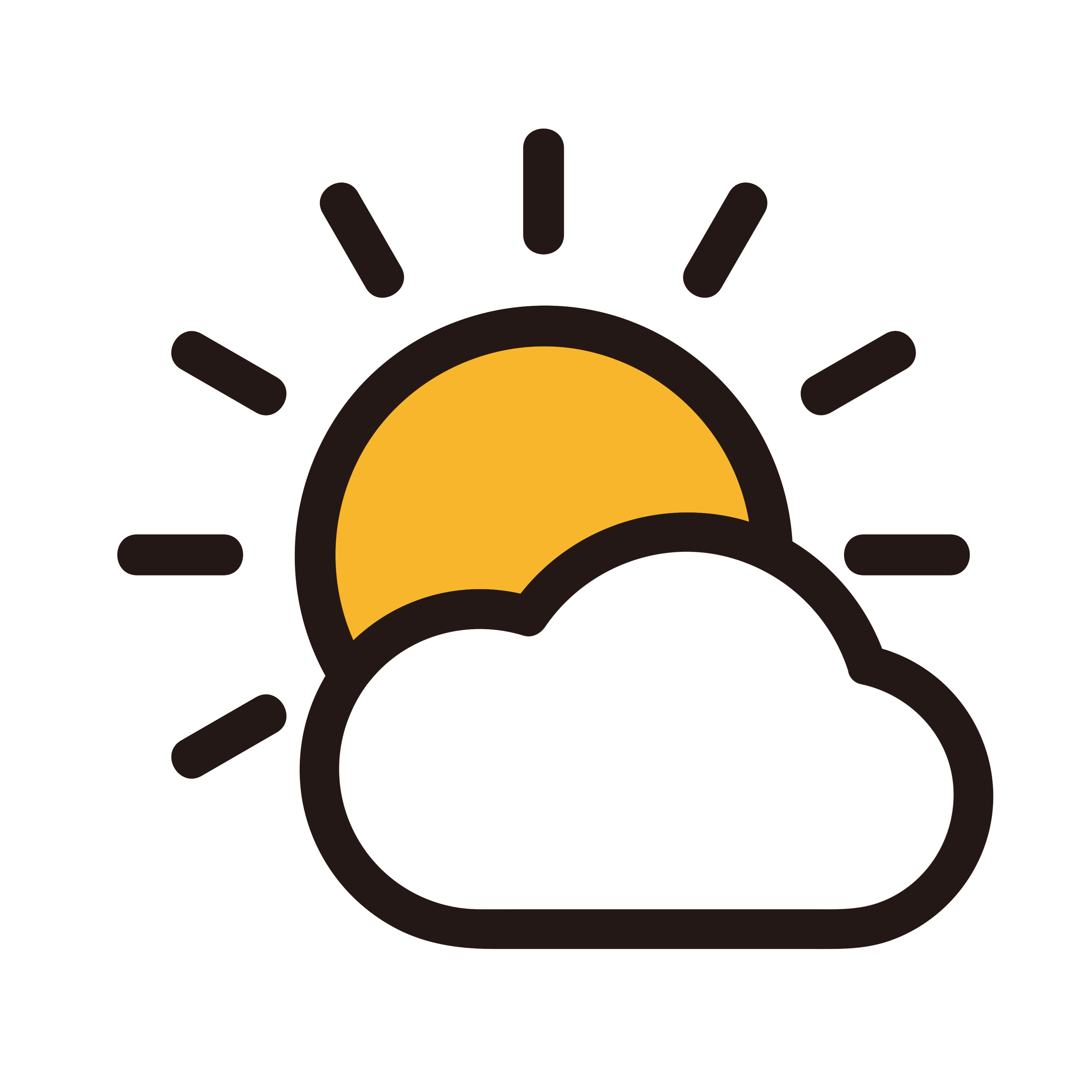}};
		\begin{scope}[on background layer]
			\draw[fill opacity=0.4, fill=blue, draw=none] (B1.north west) rectangle (B1.south east);
			\draw[fill opacity=0.4, fill=orange, draw=none] (ElA1.north west) rectangle (ElA1.south east);
			\draw[fill opacity=0.4, fill=orange, draw=none] (ElE1.north west) rectangle (ElE1.south east);
			\draw[fill opacity=0.4, fill=orange, draw=none] (Ext.north west) rectangle (Ext.south east);
		\end{scope}
	\end{tikzpicture}
	\caption{%An intuition for the Aldous--Hoover Theorem. 
		All the Americans play against all Europeans and we simulate the results of each game. 
		$A$ means that the American won, $D$ means a draw, and $E$ means that the European won. 
		The predicted result of a game (blue part) depends only on the orange part, i.e.\ on the simulation of the asymptotic Elo ratings of the two players involved and of the external factors.} % (depicted by a sun with a cloud here).}
	\label{fig:rough_ah}
	\end{figure}
%	\paragraph{The synthetic Aldous--Hoover theorem}
%	This example illustrates the following synthetic formulation of the Aldous--Hoover Theorem:

%	\begin{theorem}[Synthetic Aldous--Hoover Theorem]\label{thm:AldousHoover_informal}
%	\paragraph{Synthetic Aldous--Hoover Theorem}
	Turning to the formal statement, our \newterm{Synthetic Aldous--Hoover Theorem} (\cref{thm:AldousHooverWeak}) extends the de Finetti Theorem expressed via \cref{eq:definetti_intro} as follows.
	\begin{theorem*}
		In a Markov category with conditionals, countable Kolmogorov products and satisfying the Cauchy--Schwarz axiom,\footnotemark{}
		\footnotetext{See \cref{sec:prelims} for more detail on these assumptions.}%
		every row-column exchangeable morphism ${p \colon I \to X^{\mathbb{N} \times \mathbb{N}}}$ can be written as\footnotemark{}
		\footnotetext{The coloring of the wires has no significance other than being a visual aid. 
			In the main text, we avoid such complicated string diagrams and use the \newterm{plate notation} instead (\cref{sec:plate_notation}).}%
		\begin{equation}\label{eq:aldous_hoover_intro}
			\tikzfig{aldous_hoover_standard}
		\end{equation}
		for suitable morphisms $q$, $f$, $g$, and $h$.
	\end{theorem*}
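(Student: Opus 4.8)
The plan is to build the four-variable decomposition \eqref{eq:aldous_hoover_intro} by extracting the three layers of latent data one at a time, using conditionals to define the generating morphisms and the Cauchy--Schwarz axiom to certify the conditional independences that glue the two exchangeability directions together. Throughout I would work inside the Kolmogorov product $X^{\mathbb{N}\times\mathbb{N}}$, exploiting its universal property to manipulate the infinitely many outputs coherently, and I would encode row-column exchangeability as invariance of $p$ under the evident action of finitely supported permutations of rows and of columns, so that the whole argument stays at the level of morphisms rather than points.

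First I would treat the array as an exchangeable sequence in a single direction. Viewing $p$ as a state whose entries are the rows $R_i = (X_{ij})_j \in X^{\mathbb{N}}$, row-exchangeability makes $(R_i)_i$ an exchangeable sequence of $X^{\mathbb{N}}$-valued outputs, so the synthetic de Finetti Theorem (the one the Cauchy--Schwarz axiom makes available) yields a common object together with a state $q$ and a kernel generating the rows conditionally i.i.d.; a symmetric application does the same in the column direction. The point to appreciate is that the de Finetti latent obtained this way is large: it is the directing object of the sequence and morally records the entire family of latents from the \emph{other} direction. Hence neither application by itself isolates the small common variable $u$ of \eqref{eq:aldous_hoover_intro}, and a naive iteration would wrongly collapse the row and column structure into a single latent.

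The heart of the argument is therefore to reconcile the two one-dimensional representations. I would show that the genuinely common variable $u$ is the overlap of the row-directing and column-directing data, that conditioning on $u$ renders the row latents $(v_i)_i$ and column latents $(w_j)_j$ independent and separately i.i.d., and that conditioning on the triple $(u, v_i, w_j)$ makes the entry $X_{ij}$ independent of everything else. The generating morphisms are then read off as conditionals: $g$ and $h$ produce the row and column latents from $u$, while $f$ produces $X_{ij}$ from $(u, v_i, w_j)$ with the entry-specific randomness absorbed into $f$. Once these independences are in hand, assembling the conditionals and verifying that their composite reproduces $p$ is a bookkeeping exercise, most conveniently carried out in the plate notation.

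The main obstacle is exactly this reconciliation step: proving that a variable simultaneously determined by the row-invariant and the column-invariant data is already determined by their intersection, and dually that the two directional conditional-independence statements can be multiplied into a single two-dimensional factorization. This is where I expect the Cauchy--Schwarz axiom to do the real work, since it is the synthetic counterpart of the $L^2$ Cauchy--Schwarz inequality that classically forces the conditional expectations onto the two invariant fields to cooperate, and it is precisely what upgrades the pair of de Finetti representations into the joint decomposition. A secondary, more routine difficulty is the careful handling of the countably many simultaneous conditionals; for this the ordered Markov property and the d-separation criterion, generalized to Markov categories, offer an alternative and more systematic route to the same independences.
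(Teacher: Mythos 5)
Your scaffolding matches the paper's proof closely: the paper also applies its synthetic de Finetti Theorem separately to rows and to columns, realizes the latents concretely as tails of the array itself (the common variable is the array tail $X^{\compl{\A},\compl{\A}}$, the row and column latents are the row tails $X^{i,\compl{\A}}$ and column tails $X^{\compl{\A},j}$ — exactly your ``overlap of the row-directing and column-directing data''), establishes precisely the conditional independences you list as three independence lemmas, and reads off the generating morphisms as conditionals. However, there is a genuine gap at what you yourself identify as the heart of the argument: the reconciliation of the two one-dimensional representations. You state that you \emph{expect} the Cauchy--Schwarz axiom to force the two directions to cooperate, but you give no mechanism, and the formulation you propose — that a variable determined by both the row-invariant and the column-invariant data is determined by their intersection — is not what the paper proves and is not a statement that follows formally; classically, conditioning on an intersection of $\sigma$-algebras does not commute with the individual conditionings, so an argument of this shape would need substantial extra input. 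The paper's actual mechanism is a \emph{shift covariance} proposition: if $p \colon A \to X^{\N}$ is exchangeable and $f \colon X^{\N} \to Y^{\N}$ is permutation covariant ($Y^\sigma \comp f = f \comp X^\sigma$ up to $\as{p}$ equality), then $f$ is also shift covariant; this is where Cauchy--Schwarz enters, by exhibiting the three ``inner-product'' diagrams whose equality the axiom converts into $Y^s \comp f \ase{p} f \comp X^s$. Its corollary — every exchangeable state on $(X \otimes Y)^{\N}$ displays $Y^{\compl{\A}} \perp X^{\A} \mid X^{\compl{\A}}$ — applied to the array split as $X^{\N,\A} \otimes X^{\N,\compl{\A}}$ yields $X^{\A,\compl{\A}} \perp X^{\compl{\A},\A} \mid X^{\compl{\A},\compl{\A}}$, which is the cross-direction independence your plan needs; the rest is semigraphoid reasoning (contraction, weak union, decomposition) together with the Partition Lemma. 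Without this proposition or an equivalent substitute, your proof does not go through.

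A secondary omission: assembling the conditionals is not pure bookkeeping, because the entry-generating morphism produced by the independence lemmas a priori depends on the position $(i,j)$. The paper removes this dependence by applying row and column transpositions $\sigma_r = (i\ k)$, $\sigma_c = (j\ \ell)$ and invoking row-column exchangeability to identify $h_{ij}$ with $h_{k\ell}$ almost surely; a parallel argument (via the de Finetti theorem and shift invariance) is needed to choose $f$ and $g$ independently of the index. These are short steps, but they do require the exchangeability hypothesis once more and should be stated rather than absorbed into ``verifying that their composite reproduces $p$.''
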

%	We will state and prove this as \cref{thm:AldousHooverWeak}.
%	\tob{I think the theorem environment was better for visual emphasis, but ok}
%	\end{theorem}
%	\togo{Add a note that we show the standard form follows from this.}
%	\togo{Point to theorem 3.2 as the formal version.}
%	\togo{I think \cref{thm:AldousHoover_informal} should not be a separate theorem.
%	Either it's the same as some other result later on, and then they should have the same name, or it's not the same as some other result, but then we don't provide a direct proof and it shouldn't be expressed using the theorem environment.}

	In our chess tournament example, each $X^{i,j}$ stands for the sample space $\{A,D,E\}$ of the possible results of the game of the $i$-th European against the $j$-th American.
	The morphism $q$ gives a distribution over the possible values of external factors, while $f$ and $g$ simulate the asymptotic Elo ratings of Americans and Europeans respectively as random functions of external factors.
	The simulation of the result of an individual game is performed by $h$ as a random function taking the two relevant asymptotic Elo ratings as well as the external factors as inputs.

	There is another result that is often called Aldous--Hoover Theorem.
	It concerns so-called jointly exchangeable measures, which are invariant merely when \emph{the same} permutation is applied to both rows and columns.
%	If instead one thinks of the matrix entries as the colors of edges in an infinite complete bipartite graph, then the theorem characterizes these distributions as mixtures of distributions constructed as follows: assign to each vertex independently a random `label', and then assign to each edge a color that depends only on the labels of its endpoints.
	The two Aldous--Hoover theorems characterize exchangeable random arrays and random graphs \cite{diaconis2008graph,lovasz2012large}, respectively.
	They have a range of applications in combinatorics \cite{austin2008exchangeable} and statistical modelling \cite{orbanz2014bayesian}.
	Moreover, they can be extended to partial exchangeability of hypergraphs and higher-dimensional arrays \cite{austin2014hierarchical,lee2022deFinetti,jung2021generalization}.
%	These theorems are important in the study of random matrices, random graphs, and the many applications thereof.
%	This more recent result of measure-theoretic probability is an analogue of the de Finetti Theorem for matrices that display permutation invariance, now with respect to permuting rows and columns separately.
%	This formulation also underlines connections to the theory of random graphs \cite{diaconis2008graph}.
	In this paper, we focus on the Aldous--Hoover Theorem for row-column exchangeable arrays, and this is what we mean by ``Aldous--Hoover Theorem'' throughout.
	However, we believe that joint exchangeability can be also addressed in the future by similar methods.
%	\togo{I would remove the last sentence since we say essentially the same in the last sentence of the summary.}
%	\tob{Not necessarily, because hierarchical exchangeability (whatever exactly this is) might only specialize to row-column exchangeability rather than joint exchangeability}

	\subsection*{Summary}
	
	We start by providing some background material on Markov categories and conditional independence in \cref{sec:markov_cats,sec:cond_ind}, respectively.
	We then present a new \newterm{Synthetic de Finetti Theorem} in \cref{sec:definetti}.
	In comparison to the original version from \cite{fritz2021definetti}, the assumption of representability is replaced by a new information flow axiom called the \newterm{Cauchy--Schwarz Axiom} (\cref{sec:cauchy_schwarz}).
	A key step in the proof is \cref{prop:shift_perm_inv}, which establishes an equivalence between permutation-invariant and shift-invariant morphisms.
	This generalises \cite[Proposition 4.5]{fritz2021definetti} from deterministic to generic morphisms, and the Cauchy--Schwarz Axiom plays an important role here.
	Thanks to \cref{prop:shift_perm_inv} we also obtain new generalisations of the Synthetic de Finetti Theorem, namely one for dilations (\cref{thm:dF_dilation}) and one for \as{}-exchangeable morphisms (\cref{thm:dF_morphism}), with almost no extra work.
	The section then concludes by introducing the \newterm{plate notation} for string diagrams (\cref{sec:plate_notation}) and \emph{multivariate} conditional independence (\cref{sec:cond_ind_2}), both of which allow for cleaner statements.
	
	Several versions of the Synthetic Aldous--Hoover Theorem are then formulated in \cref{sec:AH_statements}, among which the strongest one is \cref{thm:AldousHoover}.
	When instantiated in measure-theoretic probability, modelled by the Markov category $\BorelStoch$, we recover the classical Aldous--Hoover Theorem for row-column exchangeable arrays of random variables taking values in standard Borel spaces (see \cref{rem:AH_traditional}).
	
	The proof of \cref{thm:AldousHoover}, illustrated in \cref{fig:strategy_ah}, begins in \cref{sec:shift_covariance}.
	The first important stepping stone is \cref{prop:shift_covariance}, which establishes that, under our assumptions, every permutation covariant morphism is also shift covariant. 
	The Cauchy--Schwarz Axiom is once again crucial to obtain this foundational result. 
	Next, our Synthetic de Finetti Theorem and the existence of conditionals are used to show a number of conditional independence relations in \cref{sec:3_cond_ind}.
	Finally, in \cref{sec:mainProof} we put these ingredients together to obtain the complete proof.
%	: together with the Cauchy--Schwarz axiom, the synthetic de Finetti theorem and the existence of conditionals, we obtain a number of conditional independence relations, and we derive the desired decomposition of the morphism $p$ from these.
%	We prove this theorem by several suitable applications of our synthetic de Finetti Theorem together with~\cref{def:cauchy_schwarz} to derive a number of conditional independence relations, and then use standard implications between conditional independence relations in order to prove compatibility with the relevant causal structure by an argument reminiscent of the $d$-separation criterion~\cite{fritz2022dseparation}.
	
	In \cref{sec:markov_prop}, we present a more systematic variant of the proof by introducing the \newterm{ordered Markov property} and using the d-separation criterion \cite{fritz2022dseparation}. 
	Although less direct, we believe that this approach will facilitate a characterization of hierarchical exchangeability for higher-dimensional arrays of random variables \cite{austin2014hierarchical,lee2022deFinetti,jung2021generalization} in the context of Markov categories.

	In \cref{sec:param}, we recall parametric Markov categories and prove that the Cauchy--Schwarz axiom is stable under moving to a parametric Markov category.
	This allows for parametric versions of the de Finetti and Aldous--Hoover Theorems, which are stated in \cref{sec:dF_further,sec:AH_further} respectively. 
	These two appendices further investigate a dilational and an almost sure version for both statements.
	As far as we know, these versions of the Aldous--Hoover Theorem are new even for standard Borel spaces.

	The final appendix (\cref{sec:omnirandom}) introduces the concept of omnirandomness, which we employ in the main text to derive the functional form of the Aldous--Hoover theorem (\cref{thm:AH_functional}).
	
	\subsection*{Acknowledgements}
	We thank Dario Stein for suggesting plate notation to us.
	%Tobias Fritz and Antonio Lorenzin are supported by the Austrian Science Fund (FWF) P 35992-N.
	This research was funded in part by the Austrian Science Fund (FWF) [doi:\href{https://www.doi.org/10.55776/P35992}{10.55776/P35992}, doi:\href{https://www.doi.org/10.55776/P33122}{10.55776/P33122}, doi:\href{https://www.doi.org/10.55776/Y1261}{10.55776/Y1261}]. For open access purposes, the authors have applied a CC BY public copyright license to any author accepted manuscript version arising from this submission. 
	Leihao Chen acknowledges Booking.com for support.
	Andreas Klingler further acknowledges funding of the Austrian Academy of Sciences (\"OAW) through the DOC scholarship 26547.

\section{Preliminaries} %on Markov categories and the de Finetti Theorem}
\label{sec:prelims}

	This section covers the preliminaries necessary to state and prove our Synthetic Aldous--Hoover Theorem. 
%	Following a brief review of the theory Markov categories and related notins, 
	In \cref{sec:markov_cats} we briefly review the relevant theory of Markov categories. %and related notions outlines the key assumptions of the main theorem. 
	An essential tool for our proofs is conditional independence, which we recall in \cref{sec:cond_ind}. 
	\cref{sec:cauchy_schwarz} is devoted to the Cauchy--Schwarz axiom, which is the only assumption present in our theorems that is original to this work. 
	In \cref{sec:definetti} we employ this axiom to prove the new Synthetic de Finetti Theorem. %, which is different from the original one presented in \cite{fritz2021definetti}, e.g.\ becuase there is no need to assume representability.
	
	As the informal statement of the Synthetic Aldous--Hoover Theorem from the introduction illustrates, it can be confusing, if not misleading, to consider traditional string diagrams with infinitely many wires.
	%Following the illustration in the introduction, the skill of a fixed European will be crucial to determine the outcome of any of his games, and this will translate into a morphism whose output must be copied a certain number of times. 
	%Since the same happens for a fixed American, we end up with a hard to parse connection in the string diagram (see \cref{ex:AH_2x2}). 
	In order to address this problem, \cref{sec:plate_notation} introduces plate notation for Markov categories.
	A first application of this notation is given in \cref{sec:cond_ind_2}, where we extend the notion of conditional independence to infinite families of objects.
	We also prove the \nameref{lem:partition}{\,---\,}a result that is important for the three independence lemmas discussed in \cref{sec:3_cond_ind}.
		
	\subsection{Markov Categories and Relevant Assumptions}
	\label{sec:markov_cats}
	
%	To ease the unexpert reader, 
	We start with a brief account of Markov categories before introducing new concepts.
	For a more detailed treatment, we refer the reader to previous works \cite{fritz2019synthetic,fritz2021definetti,perrone2024markov}. %, or one of the other references in this introduction, if some notions appear unclear.
	In essence, a \newterm{Markov category} is a symmetric monoidal category $(\cC, \otimes , I)$ such that the monoidal unit $I$ is terminal, and every object $X$ comes equipped with a commutative comonoid structure in a way that is compatible with the monoidal structure.
	The comonoid structure morphisms are called \newterm{copy}, denoted $\cop_X \colon X \to  X \otimes X$, and \newterm{delete}, denoted $\discard_X \colon X \to I$.
	In string diagrams, we write them as
	\begin{equation}
		\tikzfig{copy_del}
	\end{equation}
	It is important to keep in mind that the copy morphisms are not natural. A morphism $f$ with respect to which copying is natural, meaning that $f$ commutes with copy, is called \newterm{deterministic}.
	These morphisms form a cartesian monoidal subcategory of $\cC$.
	Additionally, morphisms whose domain is the monoidal unit $I$ are referred to as \newterm{states}.
	
%	These names are motivated by 
	The central example for this work is the Markov category $\BorelStoch$, whose objects are standard Borel spaces, morphisms are Markov kernels composed via the Chapman--Kolmogorov equation, and the monoidal structure is given by the usual products of measurable spaces (on objects) and Markov kernels (on morphisms).
	Its deterministic morphisms are exactly those Markov kernels which correspond to measurable maps, and we do not distinguish between a measurable map and its associated Markov kernel.
	With this in mind, $\cop_X \colon X \to X \otimes X$ is the diagonal map of $X$.
	States in $\BorelStoch$ are probability measures.
	Composing with the morphism $\discard_X \colon X \to I$ implements marginalization, in the sense that for any $p \colon I \to X \otimes Y$, the composite $(\discard_X \otimes \id_Y) \circ p$ is the marginal of the joint distribution $p$ on $Y$, drawn as
	\begin{equation}
		\tikzfig{marginal_p}
	\end{equation}
	
	Conditioning is a fundamental concept in probability theory. % and so it makes sense to ask whether this procedure is also available for Markov categories.
	In a Markov category, a conditional of a morphism $f \colon A \to X \otimes Y$ given $X$ is any morphism $f_{|X} \colon X \otimes A \to Y$ satisfying
	\begin{equation}
		\tikzfig{conditional}
	\end{equation}
	A Markov category is said to have \newterm{conditionals} if every $f$ as above has a conditional.
	In $\BorelStoch$, conditionals exist, and for $A = I$ they correspond to regular conditional probabilities.
	The existence of conditionals plays a key role in this work, as it implies the semigraphoid property known as \emph{weak union} for conditional independence relations in Markov categories (see \cref{sec:cond_ind}), which we use repeatedly in our proofs.
	%This result was established in \cite[Lemma 12.5]{fritz2019synthetic}.
%	In the present paper, this axiom will not play an explicit role, although we will refer to some results that are consequences of it. One example is the semigraphoid property called ``weak union'', which we will also recall in \cref{sec:cond_ind}, together with the notion of conditional independence.
	
%	As we discussed at the beginning of the introduction, the statements of the 
	Both the de Finetti and Aldous--Hoover Theorems concern infinitely many random variables. % tensor product.
%	In translating the statements to Markov categories, 
	In Markov categories, we correspondingly work with states on infinite tensor products of objects.
	These are formalized as {Kolmogorov products}, as introduced in \cite{fritzrischel2019zeroone}.
	Given a family of objects $(X_i)_{i \in \mathcal{I}}$, let us consider the collection of finite tensor products $X^F=\bigotimes_{i \in F} X_i$, where $F \subseteq \mathcal{I}$ is any finite subset, together with marginalization maps
	\[ 
		\pi_{F,G} = \id_{X^F} \otimes \discard_{X^{F\setminus G}} \colon X^F \to X^G
	\] 
	between them for any pair of finite subsets $F,G \subseteq \mathcal{I}$ satisfying $F \supseteq G$.
	The categorical limit of this diagram, denoted by $X^{\mathcal{I}}$, is called a \newterm{Kolmogorov product} if it is preserved by the tensoring functor $\ph \otimes Y$ for every object $Y$, and if all morphisms $X^{\mathcal{I}} \to X^{F}$ in the limit cone are deterministic.
	This terminology is motivated by the close connection with the Kolmogorov extension theorem, which states that joint probability measures of infinitely many random variables are in bijection with compatible families of joint probability measures for each finite subset of variables.
	By considering objects in place of random variables and states in place of probability measures, this result translates into a bijection between the set of states $I \to X^{\mathcal{I}}$ and the set of compatible families of states $(I \to X^F)_{F\subseteq \mathcal{I}\text{ finite}}$.
	In fact, the existence of Kolmogorov products is ensured by a parametric version of the Kolmogorov extension theorem; see \cite{fritzrischel2019zeroone} for details.	
%	Given a family of objects $(X_i)_{i \in \N}$, their Kolmogorov product $\bigotimes_{i \in N} X_{i}$ is the limit of the finite tensor products $X^F$ indexed by inclusion, if such a limit is preserved by tensoring with any other object.
%	If, additionally, all finite marginalizations $X^{\N} \to X^{F}$ are deterministic, then $X^{\N}$ is a (countable) Kolmogorov product.
	
	These axioms were previously used in the synthetic proof of the de Finetti Theorem \cite{fritz2021definetti}, alongside an additional assumption of representability \cite{fritz2023representable}. % concerning a monadic description of the category.
	Instead of the latter, we now make use of the Cauchy--Schwarz axiom, which we introduce and discuss in \cref{sec:cauchy_schwarz}.
% 	will properly define in the text (see \cref{def:cauchy_schwarz}).
%	For Markov categories with kernels as morphisms, such as $\BorelStoch$, it reduces to saying that whenever $(f-g)^2$ is $p$-almost surely equal to $0$, then $f$ is $p$-almost surely equal to $g$. %, which is evidently true (see \cref{lem:stoch_cs}).
%	For this reason, in this paper we assume to work under the following.
	
	\begin{assumption}\label{ass:de_finetti}
		Throughout the paper, we assume that $\cC$ is a Markov category that
		\begin{enumerate}
			\item has conditionals,
			\item has countable Kolmogorov products, and
			\item satisfies the Cauchy--Schwarz axiom (\cref{def:cauchy_schwarz}).
		\end{enumerate}
	\end{assumption}
	These three requirements are the only necessary ingredients in the proof of our synthetic Aldous--Hoover Theorem.
	For the sake of clarity, we restate them explicitly in our main result.

	By employing an axiomatic approach, this paper replaces long measure-theoretic calculations by the more visual tool of string diagrams.
	However, measure-theoretic aspects are certainly not circumvented, since they are required to prove that these axioms hold for the Markov categories capturing measure-theoretic probability.
	For instance, proving that these Markov categories have conditionals requires a parametric disintegration of measures, a deep result that relies on powerful measure-theoretic machinery.
	We provide references to the proofs in the literature in the following example; spelling them out here would require a lengthy excursion into measure theory.
	
	\pagebreak[3]
	\begin{example}[Markov categories satisfying the assumptions]\label{ex:assumptions}\hfill
		\begin{enumerate}
			\item The main example we have in mind is $\BorelStoch$, the Markov category of standard Borel spaces and Markov kernels. 
			It satisfies \cref{ass:de_finetti}, which can be shown by combining \cite[Example~11.7]{fritz2019synthetic}, \cite[Example 3.6]{fritzrischel2019zeroone}, and \cref{lem:stoch_cs}.
		\item  Let $\AnaStoch$ and $\UnivStoch$ be the full Markov subcategories of $\Stoch$ where we restrict to analytic measurable spaces\footnote{Also known as Souslin spaces.} \cite[Section 6.6]{bogachev2007measure} and universally measurable spaces \cite[Definitions B.1, B.2 and B.27]{forre2021conditional}\footnote{For an earlier use of the term, see \cite[p.~155]{kechris}.}, respectively.
			By definition, we have full Markov subcategory inclusions
			\[
				\BorelStoch \subseteq \AnaStoch \subseteq \UnivStoch \subseteq \Stoch.
			\]
			All four categories satisfy the Cauchy-Schwarz axiom by \cref{lem:stoch_cs}.
			To prove that the first three have countable Kolmogorov products, one can use the Kolmogorov Extension Theorem for universally measurable spaces \cite[Theorem 12.1.2]{dudley2018real}, the argument for the universal property from \cite[\mbox{Example 3.6}]{fritzrischel2019zeroone}, and the fact that both analytic and universally measurable spaces are closed under countable products \cite[Lemma B.29]{forre2021conditional}.
			They also have conditionals, as proven in \cite[Theorem 3.4]{bogachev20kant} and \cite[Theorems C.6 and C.7]{forre2021conditional}. 
			Thus, $\AnaStoch$ and $\UnivStoch$ also satisfy \cref{ass:de_finetti}.

			% \item Let $\cat{UQUS}$ be the category of universal quasi-universal spaces defined by Forr\'e in \cite[Definition/Lemma~5.22]{forre2021quasimeasurable}, and $\mathcal{P}$ be the strong probability monad given by \cite[Definition~3.17]{forre2021quasimeasurable}. Then \cite[Theorem~5.46]{forre2021quasimeasurable} tells us that the Kleisli category of $\mathcal{P}$ on $\cat{UQUS}$, which we denote by $\cat{UQUStoch}$, is an a.s.-compatibly representable Markov category with conditionals and countable Kolmogorov products and therefore satisfies \cref{ass:de_finetti}.
			% \anto{This is false, but if we can prove that the CS axiom holds, it would be a great addition. Else, just delete this item.}
			% \tob{By false, you just mean that a proof of the CS axiom is missing, right? Also since we're not concerned with representability in this paper, I would suggest removing mention of that}
		\end{enumerate}
	\end{example}
	
	\subsection{Conditional Independence}
	\label{sec:cond_ind}
		
		A key aspect of the significance of the de Finetti and Aldous--Hoover Theorems lies in the specific structure of conditional independences they establish.
		This is also relevant for the proof, which can be conducted by assembling several conditional independence relations to derive the final result \cite{aldous1985exchangeability}.
		%These ingredients consist of applications of de Finetti Theorem as well as additional conditional independences.
		Since we adopt a similar strategy in our approach (see \cref{fig:strategy_ah}), we give some background on conditional independence in Markov categories, as been introduced and studied in \cite[Section 6.2]{chojacobs2019strings} and \cite[Section~12]{fritz2019synthetic}.

		\begin{definition}
			\label{def:cond_ind}
			A morphism $p\colon I \to X \tensor W \tensor Y$ in a Markov category $\cC$ displays the \newterm{conditional independence}
			\begin{equation}
				X\perp Y \mid W
			\end{equation}
			if there exist morphisms $f\colon W \to X$ and $g \colon W \to Y$ satisfying
			\begin{equation}\label{eq:cond_ind}
				\tikzfig{display_cond_ind}
			\end{equation}
		\end{definition}

		Such $f$ is necessarily a conditional of the marginal of $p$ on $X \tensor W$ given $W$, and similarly for $g$ \cite[Remark 12.2]{fritz2019synthetic}.
		
		Whenever a marginal of $p$ displays a specific conditional independence, we also say that $p$ itself displays such a conditional independence. 
		This causes no confusion as long as the outputs have distinct names, since the relevant outputs are always explicitly specified.
		For instance, saying that the morphism $p$ from \cref{def:cond_ind} displays $X \perp Y \mid I$ (which is just plain independence) means that it satisfies
		\begin{equation}\label{eq:display_cond_ind_2}
			\tikzfig{display_cond_ind_2}
		\end{equation}
		for some states $f$ and $g$.
		
		\begin{notation}
			\label{nota:conditional_independence}
			For easier reading, we generally denote a tensor product within a conditional independence relation by a comma, e.g.\ writing $X_1 , X_2 \perp Y \mid W$ instead of $X_1 \otimes X_2 \perp Y \mid W$.
		\end{notation}
		
		One of the advantages of working with conditional independence relations is that they are more concise than fully writing out statements like \cref{eq:cond_ind}.
		%Moreover, they hide the identity of the morphisms in the decomposition, which is an advantage if we are not interested in these specifically.
		Deriving implications between such relations is key in our proofs.
		Important ingredients for them are the following well-known properties of conditional independence, which have been proven for Markov categories in \cite[Lemma 12.5]{fritz2019synthetic}.

		\begin{lemma}[Semigraphoid properties]\label{lem:semigraphoid}
			Let $\cC$ be a Markov category with conditionals.
			Then the following implications between conditional independence relations hold for any state in $\cC$:
			\begin{enumerate}[label=(\roman*)]
				\item\label{semi:symmetry} \newterm{Symmetry:}
				\begin{equation}
					X \perp Y \mid W \quad \Rightarrow \quad Y \perp X \mid W;
					\end{equation}
				\item\label{semi:decomposition} \newterm{Decomposition:}
				\begin{equation}
					X_1, X_2 \perp Y \mid W\quad \Rightarrow\quad X_1 \perp Y\mid W;
					\end{equation}
				\item\label{semi:contraction} \newterm{Contraction:}
				\begin{equation}
					X\perp Y \mid Z , W\ \ \text{and}\ \ X \perp Z \mid W \quad \Rightarrow \quad X \perp Z , Y \mid W;
					\end{equation}
				\item\label{semi:weak_union} \newterm{Weak union:}
				\begin{equation}
					X_1 , X_2 \perp Y \mid W \quad \Rightarrow \quad X_1 \perp Y \mid W , X_2.
				\end{equation}
			\end{enumerate}
		\end{lemma}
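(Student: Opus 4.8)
The plan is to unfold \cref{def:cond_ind} into its defining string-diagram equation \cref{eq:cond_ind} in each case and then manipulate it using only the comonoid (copy/delete) axioms, the symmetry of the monoidal product, and the existence of conditionals. Two of the four implications require no conditionals at all. For \textbf{symmetry}, if $p$ factors through $f \colon W \to X$ and $g \colon W \to Y$ attached to two copies of the $W$-marginal, then swapping the two output legs via the symmetry isomorphism $X \tensor Y \cong Y \tensor X$ exhibits the same factorization with the roles of $f$ and $g$ interchanged, which is exactly $Y \perp X \mid W$. For \textbf{decomposition}, starting from a factorization of $X_1, X_2 \perp Y \mid W$ through some $h \colon W \to X_1 \tensor X_2$ and $g \colon W \to Y$, post-composing $h$ with $\discard_{X_2}$ discards the $X_2$-leg; by the counit law this leaves $(\id \tensor \discard_{X_2}) \circ h \colon W \to X_1$ and $g$ witnessing $X_1 \perp Y \mid W$.

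The remaining two implications are where the assumption of conditionals enters. For \textbf{weak union}, I would begin with the factorization of $X_1, X_2 \perp Y \mid W$ through $h \colon W \to X_1 \tensor X_2$ and $g \colon W \to Y$, and take a conditional $h_{|X_2} \colon X_2 \tensor W \to X_1$ of $h$ given its $X_2$-output. The idea is that, relative to the conditioning context $(W, X_2)$, the $X_1$-leg is produced by $h_{|X_2}$ from $(X_2, W)$, while the $Y$-leg is still produced by $g$ from $W$ alone, i.e.\ by $g \circ \discard_{X_2}$. Substituting the defining equation of $h_{|X_2}$ back into the factorization of $p$ and reorganizing the copies of $W$ and $X_2$ then shows that, with $(W, X_2)$ held as the conditioning system, the $X_1$ and $Y$ legs are generated from independent copies, which is precisely $X_1 \perp Y \mid W, X_2$.

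\textbf{Contraction} is the step I expect to be the main obstacle, since it must genuinely combine two hypotheses. From $X \perp Y \mid Z, W$ we obtain a conditional $a \colon Z \tensor W \to X$ of $X$ given $(Z,W)$ together with $b \colon Z \tensor W \to Y$, while from $X \perp Z \mid W$ we obtain $c \colon W \to X$ and $d \colon W \to Z$. The target $X \perp Z, Y \mid W$ calls for one morphism producing $X$ from $W$ and another producing the pair $(Z, Y)$ from $W$; for the latter I would build $v \colon W \to Z \tensor Y$ by copying $W$, generating $Z$ via $d$, and then feeding $(Z, W)$ into $b$ to generate $Y$, taking $c$ itself for the $X$-leg. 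The crux is to show that $a$, which a priori depends on both $Z$ and $W$, may be replaced inside the diagram by $c \circ \discard_Z$, i.e.\ by a conditional of $X$ that ignores $Z$. This is exactly what $X \perp Z \mid W$ supplies: conditionally on $W$ the variables $X$ and $Z$ are independent, so $c \circ \discard_Z$ is itself a conditional of $X$ given $(Z,W)$, and by essential uniqueness of conditionals it agrees with $a$ almost surely with respect to the marginal $p_{ZW}$. The delicate point is precisely this substitution, because conditionals are unique only up to almost-sure equality; the real work lies in verifying that the almost-sure qualifier does not obstruct re-inserting the modified morphism into the full factorization of $p$, which is legitimate because the surrounding diagram provides exactly the marginal $p_{ZW}$ against which the two conditionals coincide. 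Once the substitution is justified, collecting the $X$-leg into $c$ and the $(Z,Y)$-legs into $v$ yields the desired factorization over $W$, establishing $X \perp Z, Y \mid W$.
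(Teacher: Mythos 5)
Your proposal is correct; note, though, that the paper does not prove \cref{lem:semigraphoid} at all---it defers to \cite[Lemma~12.5]{fritz2019synthetic}---so the comparison is with that cited proof. Your arguments for symmetry, decomposition, and weak union are the standard ones: the first two use only the symmetric monoidal and comonoid structure, and weak union is indeed the place where existence of conditionals is genuinely needed, namely to produce $h_{|X_2}$. Where you take a different route is contraction. Your argument---recognize $c \circ (\discard_Z \tensor \id_W)$ as a second conditional of $X$ given $(Z,W)$, conclude by essential uniqueness of conditionals that it agrees with $a$ almost surely with respect to $p_{ZW}$, and substitute it into the factorization of $p$, which is legitimate precisely because that factorization retains a copy of the $(Z,W)$-wires---is valid, but it leans on two nontrivial ingredients: the calculus of almost-sure equality, and essential uniqueness of conditionals, which is itself a theorem about Markov categories with conditionals (not a definitional fact) and would need to be quoted or proven. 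There is a more elementary argument that avoids both. By coassociativity of copying, the factorization witnessing $X \perp Y \mid Z, W$ can be rewritten as: sample $(x,z,w)$ jointly from the marginal $p_{XZW}$, copy the $(z,w)$-wires, and apply $b$ to the extra copy to produce $Y$; this absorbs $a$ into $p_{XZW}$. Now substitute for $p_{XZW}$ the factorization witnessing $X \perp Z \mid W$, namely $x \sim c(w)$ and $z \sim d(w)$ applied to copies of $w \sim p_W$, and regroup the copies to read off exactly your witnesses $c \colon W \to X$ and $v \colon W \to Z \tensor Y$. This substitution is an exact equality of string diagrams, with no almost-sure reasoning, and it shows in passing that contraction---like symmetry and decomposition---holds in an arbitrary Markov category, conditionals being required only for weak union. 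What your route buys instead is the conceptual reading that, under the second hypothesis, the conditional of $X$ given $(Z,W)$ ``does not depend on $Z$''; what the direct route buys is brevity and strictly weaker hypotheses.
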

		
%		\begin{notation}\label{nota:short_semigraphoid}
		We use the shorthands ``Dec'' for decomposition and ``WU'' for weak union to indicate which semigraphoid property we employ in our proofs.
%		\end{notation}
		
		In \cref{sec:cond_ind_2}, we introduce an extended definition of conditional independence between more than three outputs, which we use throughout our work.

	\subsection{The Cauchy--Schwarz Axiom}
	\label{sec:cauchy_schwarz}

%	\tob{should we recall \as{} equality somewhere?}
	
	\begin{definition}\label{def:cauchy_schwarz}
		A Markov category $\cC$ satisfies the \newterm{Cauchy--Schwarz axiom} if for all $A,X,Y \in \cC$, the implication
		\begin{equation}\label{eq:spat_causal}
			\tikzfig{spat_causal}
		\end{equation}
		holds for all morphisms $p\colon A \to X$ and $f,g \colon X \to Y$.
	\end{definition}
	The consequent of this implication says that $f$ and $g$ are $\as{p}$ equal \cite[Definition 13.1]{fritz2019synthetic}.

	%an informal ``inner product interpretation'', which can be formalized in Markov categories such as those involving commutative comonoids in the category of inner product spaces \cite[Section 9]{fritz2019synthetic}.
	%Roughly speaking, the above implication could be rewritten as
	Recall that $\Stoch$ is the Markov category with measurable spaces as objects and measurable Markov kernels as morphisms \cite[Section 4]{fritz2019synthetic}.
	Since $\BorelStoch$, $\AnaStoch$ and $\UnivStoch$ (see \cref{ex:assumptions}), are full Markov subcategories of $\Stoch$, the following lemma also shows that these categories satisfy the Cauchy--Schwarz axiom.
	\begin{lemma}
		\label{lem:stoch_cs}
		$\Stoch$ satisfies the Cauchy--Schwarz axiom.
	\end{lemma}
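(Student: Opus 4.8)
The plan is to unwind both sides of the implication \eqref{eq:spat_causal} into the language of Markov kernels and then to reduce the claim to the classical fact that a non-negative integrand with vanishing integral is almost everywhere zero{\,---\,}this being the measure-theoretic incarnation of the equality case in the Cauchy--Schwarz inequality.

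First I would fix an input $a \in A$ and write $\mu \coloneqq p(\ph \mid a)$ for the resulting probability measure on $X$. Since $\as{p}$ equality of $f$ and $g$ (in the sense of \cite[Definition 13.1]{fritz2019synthetic}) unwinds, for each such $a$, to the statement that $f(\ph \mid x) = g(\ph \mid x)$ for $\mu$-almost every $x$, it suffices to establish the latter for an arbitrary but fixed $a$. On the other side, the hypothesis of \eqref{eq:spat_causal}, once instantiated in $\Stoch$, asserts the coincidence of three morphisms $A \to Y \otimes Y$: namely those obtained by sampling $x$ from $p$ and then emitting two conditionally independent $Y$-values through $(f,f)$, through $(g,g)$, and through $(f,g)$. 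Evaluating these at $a$ and on a product set $C \times C$ for a measurable $C \subseteq Y${\,---\,}where the conditional independence built into the copy map makes the integrands factor{\,---\,}yields
\begin{equation*}
	\int_X f(C \mid x)^2 \, \mu(\dr x) \;=\; \int_X f(C \mid x)\, g(C \mid x)\, \mu(\dr x) \;=\; \int_X g(C \mid x)^2 \, \mu(\dr x).
\end{equation*}

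The Cauchy--Schwarz step is then immediate: combining the three equalities gives
\begin{equation*}
	\int_X \bigl( f(C \mid x) - g(C \mid x) \bigr)^2 \, \mu(\dr x) \;=\; 0,
\end{equation*}
so that $f(C \mid x) = g(C \mid x)$ for $\mu$-almost every $x$, for each fixed $C$. To assemble these per-set statements into the single required kernel identity, I would fix $C$ first and integrate over an arbitrary measurable $B \subseteq X$, obtaining $\int_B f(C \mid x)\,\mu(\dr x) = \int_B g(C \mid x)\,\mu(\dr x)$, i.e.\ agreement of the two joint measures $\mu(\dr x)\,f(\dr y \mid x)$ and $\mu(\dr x)\,g(\dr y \mid x)$ on every measurable rectangle $B \times C$. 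As rectangles form a $\pi$-system generating the product $\sigma$-algebra and both joints are probability measures, Dynkin's lemma upgrades this to equality on all of $X \otimes Y$, which is exactly the $\as{p}$ equality of $f$ and $g$ for the chosen $a$.

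The main obstacle is bookkeeping rather than analytic depth. Beyond reading off the hypothesis of \eqref{eq:spat_causal} correctly as the coincidence of the three second-order moment kernels, the subtler point is that $\Stoch$ admits arbitrary measurable spaces $Y$ with possibly non-countably-generated $\sigma$-algebras, so one cannot fix a countable determining family of test sets and intersect null sets across them. The device that circumvents this is precisely the order of quantifiers above: the Cauchy--Schwarz step is applied one set $C$ at a time, and the $\pi$-system argument is invoked only afterwards, so that no simultaneous-in-$C$ almost-sure statement is ever required.
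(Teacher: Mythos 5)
Your proposal is correct and follows essentially the same route as the paper: fix $a$, read the antecedent of \eqref{eq:spat_causal} as the three second-moment integral identities for a test set, complete the square to get a vanishing integral of a nonnegative function, and conclude per-set almost-everywhere equality of the kernels. The only difference is in the final bookkeeping: the paper simply cites the characterization of almost sure equality in $\Stoch$ from \cite[Example~13.3]{fritz2019synthetic} (agreement of $\int_E f(D\mid x)\,p(\dr x\mid a)$ and $\int_E g(D\mid x)\,p(\dr x\mid a)$ for all $E$, $D$, $a$), whereas you re-derive that identification by hand via the Dynkin $\pi$-system argument for the joint measures{\,---\,}a more self-contained but equivalent ending. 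One caveat: your opening reduction asserts that $\as{p}$ equality ``unwinds'' to $f(\ph\mid x) = g(\ph\mid x)$ for $\mu$-almost every $x$, which is strictly \emph{stronger} than $\as{p}$ equality when the $\sigma$-algebra on $Y$ is not countably generated (a single null set working for all test sets need not exist, as your own closing paragraph rightly notes); fortunately this mischaracterization is never used, since your actual argument concludes through equality of the joint measures, which is the correct instantiation of \cite[Definition~13.1]{fritz2019synthetic}.
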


	\begin{proof}
		For any $a\in A$ and for any measurable subset ${D \subseteq Y}$, the antecedent of Implication \eqref{eq:spat_causal} reads
%		\begin{equation}
%			\int f(D\mid x)f(E\mid x)p(\dr x\mid a)=\int f(D\mid x)g(E\mid x)p(\dr x\mid a)=\int g(D\mid x)g(E\mid x)p(\dr x\mid a).
%		\end{equation}
%		This implies 
		\begin{equation}
			\label{eq:stoch_cs_ass}
			\int_X f(D  |  x)^2 \, p(\dr x |  a) = \int_X f(D |  x) \, g(D |  x) \, p(\dr x |  a) = \int_X g(D |  x)^2 \, p(\dr x |  a),
		\end{equation}
		which implies
%		Therefore, for all $a\in A$ and all $D$
		\begin{equation}
		    \int_X \bigl( f(D |  x) - g(D |  x) \bigr)^2 \, p(\dr x |  a) = 0.
		\end{equation}
		Since the integrand is nonnegative, it must vanish almost everywhere, resulting in
		\begin{equation}
		    \int_E f(D |  x) \, p(\dr x |  a) = \int_E g(D |  x) \, p(\dr x |  a).
		\end{equation}
		for every measurable $E \subseteq X$.
		This proves the desired statement by the existing characterization of almost sure equality in $\Stoch$~\cite[Example~13.3]{fritz2019synthetic}.
	\end{proof}

	\begin{remark}
		The name ``Cauchy--Schwarz axiom'' originates from a slightly different line of argument: omitting the $D$ and using the inner product in $L^2(X, p(\ph|a))$, the \cref{eq:stoch_cs_ass} can be written as 
		\begin{equation}
			\label{eq:stoch_cs_ass2}
			\langle f,f \rangle = \langle f,g \rangle = \langle g,g \rangle.
		\end{equation}
		This implies $\lvert \langle f,g\rangle \rvert^2 = \langle f,f\rangle \langle g,g \rangle$ in particular, and therefore $f$ and $g$ differ only up to a scalar multiple in $L^2(X, p(\ph|a))$ by the equality criterion for the Cauchy--Schwarz inequality.
		But then another appeal to \cref{eq:stoch_cs_ass2} shows that this scalar is $1$, resulting in the desired $f \ase{p} g$.

		More informally, we also think of morphisms from the antecedent of \cref{eq:spat_causal} as categorical analogues of the inner product expressions from \cref{eq:stoch_cs_ass2}. %in $L^2(X, p(\ph|a))$.
	\end{remark}

\subsection{A New Synthetic de Finetti Theorem}
\label{sec:definetti}

	We now prove a variant of the Synthetic de Finetti Theorem from \cite{fritz2021definetti}.
%	\togo{State the (content of the) theorem here.}
%	\anto{I do not like this ``moving around'': I totally understand it for the Aldous--Hoover Theorem, but in general I prefer to follow a direct path (like statement, proof, statement, proof). This introduction states our intentions, and then the reader will be able to see the statement and its new proof in a matter of a page.}
	The key difference is that we replace the assumption of \as{}-compatible representability by the Cauchy--Schwarz axiom.
	In other words, we work under \cref{ass:de_finetti}.
	This allows us to use this result in our proof of the Synthetic Aldous--Hoover Theorem, in which we do not assume representability either.\footnote{It is worth noting that we have not been able to prove the Aldous--Hoover Theorem without the Cauchy--Schwarz axiom, even if \as{}-compatible representability is assumed instead.}

	In order to define the morphisms of interest for the de Finetti Theorem, we recall the notation for permuting (and discarding) selected elements in a Kolmogorov power $X^{\N}$~\cite[Section 4]{fritz2021definetti}.
	Namely given an injective map $\varphi \colon \N \to \N$, we define ${X^{\varphi} \colon X^{\N} \to X^{\N}}$ in terms of the universal property of the Kolmogorov product as the unique endomorphism which maps the $n$-th factor to the $\varphi^{-1}(n)$-th factor whenever $n$ is in the image of $\varphi$ and discards it otherwise.\footnotemark{} 
	\footnotetext{As for exponential objects in cartesian closed categories, the mapping $\varphi \mapsto X^{\varphi}$ is contravariant in $\varphi$.}%
	This notation is primarily used for a permutation of wires $X^{\sigma}$, where $\sigma \colon \N \to \N$ is a permutation, as well as for the successor function $s \colon \N \to \N$ mapping $n$ to $n+1$.
	For example, for the permutation $\sigma = (1 \ 2 \ 3)$ and successor function $s$, we have\footnotemark{}
	\footnotetext{As in \cite{fritz2021definetti}, a double wire indicates that the respective object is a Kolmogorov product (finite or infinite).
		This becomes especially helpful when parsing diagrams with plates (see \cref{sec:plate_notation}).}%
	\begin{equation}\label{eq:perm_shift}
		\tikzfig{perm_shift}
	\end{equation}
	
	\begin{definition}
		A morphism $f \colon A\to X^{\N}$ is \newterm{exchangeable} if, for all finite permutations $\sigma \colon \N \to \N$, we have
		\begin{equation}\label{eq:exchangeable_def}
			\tikzfig{exchangeable_def}
		\end{equation}
	\end{definition}

	The following preliminary result resembles \cite[Proposition 4.5]{fritz2021definetti}, but unlike that proposition, it does not require $f$ to be $\as{p}$ deterministic. 
	We remind the reader that we are working under \cref{ass:de_finetti}, even though the existence of conditionals is not necessary for the following result.

	\begin{proposition}[Permutation invariance is equivalent to shift invariance]\label{prop:shift_perm_inv}
		Let ${p\colon A \to X^{\N}}$ be an exchangeable morphism and let ${f \colon X^{\N} \to Y}$ be any morphism. 
		Then the following are equivalent:
		\begin{enumerate}
			\item\label{it:perm_inv} Permutation invariance: For every finite permutation $\sigma$, we have
			\begin{equation} 
				f \ase{p} f \comp X^{\sigma}.
			\end{equation}
%			\ak{Is it necessary to emphasize \textbf{finite} permutation invariance? In the deFinitte paper it was also only called permutation invariant.}

			\item\label{it:shift_inv} Shift invariance: For the successor function $s \colon \N \to \N$ given by $n \mapsto n+1$, we have
			\begin{equation}
				f \ase{p} f \comp X^{s}.
			\end{equation}
		\end{enumerate}
	\end{proposition}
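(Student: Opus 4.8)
The plan is to prove the two implications separately, exploiting the asymmetry that the successor $s$ is injective but not surjective: it forgets the first coordinate. A preliminary observation, used throughout, is that exchangeability of $p$ already forces $X^s \circ p = p$. Indeed, for each $k$ the marginal of $X^s \circ p$ on the first $k$ factors is the law of $(x_2,\dots,x_{k+1})$, which agrees with the marginal of $p$ on $(x_1,\dots,x_k)$ via the finite permutation $(1\ 2\ \cdots\ k+1)$; since $X^{\N}$ is a Kolmogorov product and its finite marginals are jointly monic, these finite-level equalities assemble into $X^s \circ p = p$. I will also use the standard fact that an a.s.\ equality may be precomposed with a deterministic morphism preserving $p$: if $u \ase{p} v$ and $d$ is deterministic with $d \circ p = p$, then $u \circ d \ase{p} v \circ d$, because $d$ sends the support of $p$ into itself.

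For \cref{it:shift_inv} $\Rightarrow$ \cref{it:perm_inv} I would argue purely combinatorially, with no appeal to the Cauchy--Schwarz axiom. First I iterate shift invariance: since $X^s \circ p = p$, precomposing $f \circ X^s \ase{p} f$ with $X^s$ and using transitivity yields $f \circ X^{s^N} \ase{p} f$ for every $N$. Next, for a finite permutation $\sigma$ and any $N$ exceeding its support, one has the identity $X^{s^N} \circ X^{\sigma} = X^{\sigma \circ s^N} = X^{s^N}$, because $s^N$ maps every index strictly above the support of $\sigma$, so that $\sigma \circ s^N = s^N$. Precomposing $f \circ X^{s^N} \ase{p} f$ with $X^{\sigma}$ (legitimate since $X^{\sigma} \circ p = p$ by exchangeability) gives $f \circ X^{s^N} \ase{p} f \circ X^{\sigma}$, and combining this with $f \circ X^{s^N} \ase{p} f$ yields $f \circ X^{\sigma} \ase{p} f$, as desired.

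The implication \cref{it:perm_inv} $\Rightarrow$ \cref{it:shift_inv} is the substantial one, and here the Cauchy--Schwarz axiom is essential; this is exactly the step that generalizes \cite[Proposition 4.5]{fritz2021definetti} from $\as{p}$ deterministic $f$ to arbitrary $f$. I would apply \cref{def:cauchy_schwarz} with $\phi := f \circ X^s$ and $\psi := f$, reducing $f \circ X^s \ase{p} f$ to the antecedent of \cref{eq:spat_causal}, i.e.\ to the three ``inner product'' identities $\langle \phi,\phi\rangle = \langle \phi,\psi\rangle = \langle \psi,\psi\rangle$. The outer two coincide easily: applying the deterministic $X^s$ to both legs of $\cop \circ p$ and using $X^s \circ p = p$ shows $\langle \phi,\phi\rangle = \langle \psi,\psi\rangle$. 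The content is the cross term $\langle \phi,\psi\rangle = \langle \psi,\psi\rangle$. To obtain it I would approximate the shift by the cyclic permutations $\rho_N = (1\ 2\ \cdots\ N)$, which agree with $s$ on $\{1,\dots,N-1\}$; permutation invariance gives $f \circ X^{\rho_N} \ase{p} f$, so each $\rho_N$ matches the baseline exactly, and the Cauchy--Schwarz axiom is precisely what upgrades this family of finite approximations into the exact a.s.\ equality of the cross term for generic, non-deterministic $f$.

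I expect the cross term to be the main obstacle. The difficulty is intrinsic: because $s$ discards a coordinate, no single finite permutation (nor any finite composite of the available relabeling identities) can turn $f \circ X^s$ into $f$ on the nose, so a purely diagrammatic manipulation cannot close the gap. The role of the Cauchy--Schwarz axiom is to compensate for the absence of the a.s.-determinism exploited in the earlier proof: quantitatively, once $\phi$ and $\psi$ have equal ``norms'' and equal ``inner product'', the equality case of the Cauchy--Schwarz inequality (as in the remark following \cref{lem:stoch_cs}) forces $\phi \ase{p} \psi$.
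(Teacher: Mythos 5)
Your easy direction (shift invariance $\Rightarrow$ permutation invariance) is correct and coincides with the paper's argument: iterate the shift, choose $N$ beyond the support of $\sigma$ so that $\sigma \comp s^N = s^N$, hence $X^{s^N} \comp X^\sigma = X^{s^N}$, and cancel. Your preliminary observations ($X^s \comp p = p$, and stability of $\ase{p}$ under precomposition by deterministic morphisms fixing $p$) are also sound, and in the hard direction your norm identity $\langle f \comp X^s, f \comp X^s \rangle = \langle f, f \rangle$ is established correctly from determinism of $X^s$ and $X^s \comp p = p$.

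The gap is the cross term, which is the entire substance of the hard direction, and your proposed mechanism for it is circular. You want $(f \comp X^s \tensor f) \comp \cop \comp p = (f \tensor f) \comp \cop \comp p$, and you propose to obtain it from the family $f \comp X^{\rho_N} \ase{p} f$ by letting ``the Cauchy--Schwarz axiom upgrade these finite approximations''. But this equality is part of the \emph{antecedent} of \cref{eq:spat_causal}: the Cauchy--Schwarz axiom consumes the three inner-product identities and outputs $f \comp X^s \ase{p} f$; it cannot also be the tool that produces one of them. Nor does it have any limiting power --- it is a purely algebraic implication, and a general Markov category has no notion of convergence in which $X^{\rho_N} \to X^s$ could even be stated. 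The only mechanism for trading finite permutations for the shift is the universal property of the Kolmogorov product (finite marginals are jointly monic), and that can only be invoked while the $X^{\N}$ wire is still exposed as an output of the diagrams being compared. The cross term has type $A \to Y \tensor Y$: once $f$ has been applied to both copies, no Kolmogorov structure remains, so no approximation argument can be run there. (Note also that the naive strengthening $(X^s \tensor \id) \comp \cop \comp p = \cop \comp p$, which would give the cross term outright, is false: in $\BorelStoch$ it forces $p$ to be supported on shift-fixed sequences.)

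What is missing is the intermediate object at which the paper works: the joint state $m \coloneqq (f \tensor \id) \comp \cop \comp p \colon A \to Y \tensor X^{\N}$. Permutation invariance of $f$, exchangeability of $p$, and determinism of $X^\sigma$ show that $m$ is exchangeable in its $X^{\N}$ output; then \cite[Lemma~5.1]{fritz2021definetti} --- whose proof is precisely the finite-marginal argument you are reaching for, run where it typechecks --- gives $(\id_Y \tensor X^s) \comp m = m$. Post-composing with $\id_Y \tensor f$ yields the cross term, and only at that point does the Cauchy--Schwarz axiom close the proof. So your skeleton (Cauchy--Schwarz applied to $f \comp X^s$ and $f$, plus the norm identity) matches the paper's, but the decisive step is absent and cannot be supplied by the tool you assign to it.
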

	\begin{proof}
		The proof of \ref{it:shift_inv} $\Rightarrow$ \ref{it:perm_inv} is the same as in the proof of \cite[Proposition 4.5]{fritz2021definetti}: 
		For a given finite permutation $\sigma$, the claim follows upon choosing $n \in \N$ satisfying $\sigma \comp s^n = s^n$, for which we then have $(X^s)^n \comp X^\sigma = (X^s)^n$.

		We still need to prove \ref{it:perm_inv} $\Rightarrow$ \ref{it:shift_inv}. 
		We thus assume that $f$ is invariant under finite permutations and that $p$ is exchangeable.
		Therefore, we can use the fact that $X^\sigma$ is deterministic for every finite permutation $\sigma$ to obtain
		\begin{equation}\label{eq:shift_perm_inv_proof1}
			\tikzfig{shift_perm_inv_proof1}
		\end{equation}
		so that the morphism on the left is exchangeable in its second output. 
		Therefore, by \cite[Lemma 5.1]{fritz2021definetti}, it is also invariant under applying the shift $X^s$ instead of a permutation.\footnotemark{}
		\footnotetext{This result and its proof hold in any Markov category with countable Kolmogorov products.}%
		Using this fact together with the shift invariance of $p$ and the fact that $X^s$ is deterministic gives us
		\begin{equation}\label{eq:shift_perm_inv_proof2}
			\tikzfig{shift_perm_inv_proof2}
		\end{equation}
%		Since $p$ is exchangeable, we also have $X^s \comp p = p$. 
%		Determinism of $X^s$ then shows 
%		\begin{equation}\label{eq:shift_perm_inv_proof3}
%			\tikzfig{shift_perm_inv_proof3}
%		\end{equation}
%		Taken together, \cref{eq:shift_perm_inv_proof2,eq:shift_perm_inv_proof3} yield the antecedent of 
		We then obtain the desired shift invariance $f \ase{p} f \comp X^s$ by applying the Cauchy--Schwarz axiom.
	\end{proof}

	Given an exchangeable state $p\colon I \to X^{\N}$, we can consider the conditional of the first output given the remaining ones, i.e.\ a morphism $p_{|\tail}\colon X^{\N} \to X$ satisfying the first equation (the second one uses the shift invariance of $p$) in
	\begin{equation}\label{eq:tail_conditional_def}
		\tikzfig{tail_conditional_def}
	\end{equation}
	which always exists in a Markov category with conditionals.
	We call a choice of such a morphism the \newterm{tail conditional} of $p$. %and denoted by $p_{|\tail}\colon X^{\N} \to X$.
%	obtained by marginalizing the first output.

	\begin{theorem}[Synthetic de Finetti Theorem, strong form]
		\label{thm:definetti}
		Let $\cC$ be a Markov category satisfying \cref{ass:de_finetti}.
		Then every exchangeable state $p\colon I \to X^{\N}$ can be written in the form
		\begin{equation}\label{eq:exc_cond_iid}
			\tikzfig{exc_cond_iid}
		\end{equation}
		for any $n \in \N$, where $\compl{\A} \coloneqq \mathbb{N}\setminus \lbrace 1,2,\dots, n \rbrace$ refers to the tail of the infinite sequence of outputs.
	\end{theorem}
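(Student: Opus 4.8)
The plan is to realize the de~Finetti latent variable concretely as the tail itself: in \eqref{eq:exc_cond_iid} I take the outer state on $X^{\compl{\A}}$ to be the marginal of $p$ on the tail, and I produce each of the first $n$ outputs $X^1, \dots, X^n$ by feeding an independent copy of this tail, broadcast through $\cop$, into the tail conditional $p_{|\tail}$. With these choices the theorem reduces to two assertions: first, that conditioned on $X^{\compl{\A}}$ each of $X^1, \dots, X^n$ is governed by $p_{|\tail}$; and second, that conditioned on $X^{\compl{\A}}$ these $n$ outputs are mutually independent. Since $p_{|\tail}$ is by definition the conditional of $X^1$ given the \emph{entire} remaining sequence rather than given only the strict tail $X^{\compl{\A}}$, both assertions hinge on showing that this conditional does not in fact depend on the finite block $(X^2, \dots, X^n)$.

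First I would upgrade the invariance of the tail conditional. By the shift invariance of $p$ together with the two defining equations in \eqref{eq:tail_conditional_def}, the morphism $p_{|\tail} \colon X^{\N} \to X$ is shift invariant with respect to the (exchangeable) tail marginal of $p$. Applying \cref{prop:shift_perm_inv} to $p_{|\tail}$ and this marginal then promotes shift invariance to full permutation invariance, so that $p_{|\tail} \comp X^{\sigma} \ase{p} p_{|\tail}$ for every finite permutation $\sigma$; this is precisely the step where the Cauchy--Schwarz axiom (\cref{def:cauchy_schwarz}) enters. Permutation invariance immediately yields the first assertion: choosing $\sigma$ so as to realign any index $i \le n$ with the singled-out first coordinate shows that the conditional of $X^i$ given the tail agrees, up to $p$-a.s.\ equality, with $p_{|\tail}$.

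The main obstacle is the second assertion, namely the conditional independence $X^1 \perp X^2, \dots, X^n \mid X^{\compl{\A}}$, which is equivalent to the claim that $p_{|\tail}$ is $p$-a.s.\ already a function of the strict tail alone. Here the strategy is to exploit permutation invariance to permute the finite block $(X^2, \dots, X^n)$ into coordinates arbitrarily far out, leaving $p_{|\tail}$ unchanged a.s., and then to feed the resulting matching of ``inner product'' expressions into the Cauchy--Schwarz axiom to conclude that the conditional of $X^1$ given the full remaining sequence and the conditional given only the strict tail coincide a.s. I expect this reshuffling-to-infinity argument, together with its careful formulation in terms of the universal property of the Kolmogorov product, to be the delicate part. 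Once it is in place, the weak union and contraction clauses of \cref{lem:semigraphoid} assemble the pairwise relations into the full mutual conditional independence of $X^1, \dots, X^n$ given $X^{\compl{\A}}$, and broadcasting a single copy of the tail to the $n$ instances of $p_{|\tail}$ produces the diagram \eqref{eq:exc_cond_iid}. Because every invariance invoked holds uniformly in $n$, the decomposition is valid for every $n \in \N$, as required.
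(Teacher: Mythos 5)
There is a genuine gap, and it sits exactly at the crux of the theorem. You assert that the shift invariance of $p_{|\tail}$ ``follows from the shift invariance of $p$ together with the two defining equations in \eqref{eq:tail_conditional_def}.'' This is not so: the defining equations only say that $p_{|\tail}$ is \emph{a} conditional of the first output given all the remaining ones, and (via shift invariance of $p$) that the same kernel also serves as the conditional of $X^2$ given $(X^3, X^4, \dots)$. Neither statement says that the kernel takes $p$-almost-surely equal values on a sequence and on its shift, i.e.\ that $p_{|\tail} \comp X^s \ase{p} p_{|\tail}$. That almost-sure kernel identity is precisely the hard content of the theorem (the analogue of \cite[Lemma~5.3]{fritz2021definetti}); it is equivalent to the claim that you yourself later single out as ``the main obstacle,'' namely that the conditional of $X^1$ given the remaining outputs depends, up to $\as{p}$ equality, only on the strict tail. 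If it really were a formal consequence of \eqref{eq:tail_conditional_def}, the de~Finetti theorem would need neither the Cauchy--Schwarz axiom nor representability, and your own third paragraph would be superfluous.

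Compounding this, you have the two directions of \cref{prop:shift_perm_inv} reversed. The implication you invoke{\,---\,}shift invariance implies permutation invariance{\,---\,}is the easy direction, proved by choosing $n$ with $\sigma \comp s^n = s^n$; it makes no use of Cauchy--Schwarz, contrary to your claim that ``this is precisely the step where the Cauchy--Schwarz axiom enters.'' The direction that uses Cauchy--Schwarz is permutation invariance $\Rightarrow$ shift invariance, and that is the direction the proof actually needs. The correct order of the argument, which is the paper's, is: (1) $p_{|\tail}$ is $\as{p}$ \emph{permutation} invariant{\,---\,}this is the easy step, since for any finite permutation fixing the first coordinate, exchangeability of $p$ shows that $p_{|\tail} \comp X^{\sigma}$ again satisfies \eqref{eq:tail_conditional_def}, and conditionals are almost surely unique; (2) the Cauchy--Schwarz direction of \cref{prop:shift_perm_inv} then upgrades this to $p_{|\tail} \comp X^s \ase{p} p_{|\tail}$; (3) iterating this shift invariance together with the defining equation produces \eqref{eq:exc_cond_iid}, as in \cite[Equation~(43)]{fritz2021definetti}. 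Your ``reshuffling-to-infinity'' sketch gestures at the right phenomenon, but it is stated as a hope rather than carried out, and once steps (1) and (2) are done in the correct order it is unnecessary: no Cauchy--Schwarz argument beyond the one already packaged in \cref{prop:shift_perm_inv} is required.
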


	\begin{proof}
		We follow the argument given in \cite{fritz2021definetti} and merely indicate what needs to be changed.
		The proof of \cite[Lemma 5.3]{fritz2021definetti} simplifies to just noting that the tail conditional is invariant under finite permutations and applying \cref{prop:shift_perm_inv}.
		Therefore, \cite[Equation (43)]{fritz2021definetti} also holds, which is exactly the present claim.
	\end{proof}

	The following version of the de Finetti Theorem mirrors the measure-theoretic version \cite[Theorem 1.1]{kallenberg2005symmetries}. 
	\begin{corollary}[Synthetic de Finetti Theorem, weak form]
		\label{cor:definetti}
		Let $\cC$ be a Markov category satisfying \cref{ass:de_finetti}.
		Then for every exchangeable state $p\colon I \to X^{\N}$, there exist an object $A \in \cC$ together with morphisms $q \colon I \to A$ and $f \colon A \to X$ such that
		\begin{equation}\label{eq:de_finetti_weak}
			\tikzfig{de_finetti_weak}
		\end{equation}
	\end{corollary}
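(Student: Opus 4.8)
The plan is to derive the weak form directly from the strong form (\cref{thm:definetti}) by bundling the tail information into a single state-and-kernel pair. First I would apply \cref{thm:definetti} with $n = 1$, which expresses the exchangeable state $p$ as a morphism that copies the tail $X^{\compl{\A}}$ (where $\compl{\A} = \N \setminus \{1\}$) and feeds it into the tail conditional $p_{|\tail}$ to produce each of the countably many outputs. The conceptual content of the strong form is exactly that, conditionally on the tail, the outputs are produced independently and identically by a single kernel; the task of the weak form is to repackage this so that the ``hidden parameter'' lives on a fixed object $A$ and the generating kernel $f \colon A \to X$ is the \emph{same} for every coordinate.

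The key step is to choose $A \coloneqq X^{\N}$ itself (the tail object) and to set $q \coloneqq p \colon I \to X^{\N}$, so that $q$ is the exchangeable state viewed as a state on the parameter object, and $f \coloneqq p_{|\tail} \colon X^{\N} \to X$, the tail conditional from \cref{eq:tail_conditional_def}. With these choices, \cref{eq:de_finetti_weak} asserts that sampling $a \in A$ from $q$ and then applying $f$ independently in each coordinate reproduces $p$. To verify this, I would invoke the iid decomposition from the strong form: because the tail conditional is invariant under finite permutations, \cref{prop:shift_perm_inv} guarantees it is also shift invariant, and hence applying $p_{|\tail}$ coordinatewise to independent copies of the shared tail yields precisely the right-hand side of \cref{eq:exc_cond_iid}. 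Concretely, one feeds the output of $q = p$ through a copy map into countably many instances of $f = p_{|\tail}$; the Kolmogorov product structure ensures that this coordinatewise application is well defined and deterministic on the parameter wire, matching the string diagram in \cref{eq:de_finetti_weak}.

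I expect the main obstacle to be bookkeeping rather than anything deep: one must check that the single kernel $f$ applied in parallel to the shared parameter genuinely reconstructs all outputs simultaneously, not merely each finite marginal separately. This is where countable Kolmogorov products are essential, since they allow us to pass from the compatible family of finite-marginal identities (each an instance of \cref{thm:definetti} for the appropriate $n$) to a single equation of states on $X^{\N}$ via the universal property. The shift invariance of $p_{|\tail}$ supplied by \cref{prop:shift_perm_inv} is precisely what makes the \emph{same} $f$ work in every coordinate; without it, one would obtain a different conditional for each position and the weak form would fail. Once the coordinatewise reconstruction is matched against the limit cone, the identity \cref{eq:de_finetti_weak} follows, completing the proof.
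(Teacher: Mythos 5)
Your proposal is correct and matches the paper's own proof essentially verbatim: both take $A \coloneqq X^{\N}$, $q \coloneqq p$, $f \coloneqq p_{|\tail}$, observe that \cref{eq:exc_cond_iid} gives the $X^{\A}$-marginal of \cref{eq:de_finetti_weak} for every finite $\A = \{1,\dots,n\}$, and conclude by the universal property of the Kolmogorov power $X^{\N}$. Your extra appeal to \cref{prop:shift_perm_inv} for shift invariance of the tail conditional is harmless but redundant, since that fact is already internal to the proof of \cref{thm:definetti} and is not needed again to pass from the strong form to the weak one.
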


	\begin{proof}
		This follows from \cref{thm:definetti} by the universal property of the Kolmogorov power $X^\N$, since with $A \coloneqq X^{\N}$, $q \coloneqq p$, and $f \coloneqq p_{|\tail}$, \cref{eq:exc_cond_iid} marginalizes to the $X^{\mathcal{A}}$-marginal of \cref{eq:de_finetti_weak} for any $\mathcal{A} \coloneqq \{ 1, 2, \dots, n \}$. % and taking $Y \coloneqq X^{\N}$.
	\end{proof}

\subsection{Plate Notation}
\label{sec:plate_notation}

	In this section, we adapt the \newterm{plate notation} for Bayesian networks~\cite{buntine1994operations} to string diagrams in Markov categories.
	This serves as a shorthand notation for conditionally independent application of multiple morphisms. 	
	Given a family of morphisms $\left( f_i \colon A \to X_i \right)_{i \in \mathcal{I}}$ indexed by a set $\mathcal{I}$ for which the Kolmogorov product $X^{\mathcal{I}} \coloneqq \bigotimes_{i \in \mathcal{I}} X_i$ exists, plate notation is
%	\tob{Do we need allow the codomain to vary with the index? I think we should be consistent, and in \cref{not:plate_overlap} it's not so obvious how this would work, so I'd rather not allow this unless necessary}
%	\togo{I don't understand what is not obvious about how \cref{not:plate_overlap} would work.
%	To me, choosing the morphism to vary but not the output is an arbitrary choice that I see no reason to impose, but we can do that if we point out we are making this arbitrary choice and one could use the same concepts in the absence of it.}
	\begin{equation}\label{eq:plate_definition}
		\tikzfig{plate_definition}
	\end{equation}
	where we leave it understood that the right-hand side denotes the unique morphism whose marginal on any finite subset $\{i_1, \dots, i_n\} \subseteq \mathcal{I}$ is as indicated.\footnote{We need not assume that $\mathcal{I}$ is countable, as the diagram on the right-hand side of \cref{eq:plate_definition} may suggest.}
	Alternatively,
	%\anto{This is a little better for me, but if you prefer the previous formulation feel free to change it back.}
	%This is granted by the definition of the Kolmogorov product, since it is the limit of finite tensors regardless of the cardinality of $\mathcal{I}$.	
	we may write this as
	\begin{equation}
		\left( \vphantom{\bigotimes} \smash{\bigotimes_{i \in \mathcal{I}} f_i} \right) \comp \cop_A,
	\end{equation}
	where $\cop_A$ denotes the canonical $\mathcal{I}$-fold copy $A \to A^{\mathcal{I}}$; but such definition also requires the existence of the Kolmogorov power $A^{\mathcal{I}}$ in addition to $X^{\mathcal{I}}$.

%	Plate notation for string diagrams is motivated by the plate notation for Bayesian networks \cite{}. % which reads as
%	\begin{equation}
%		\tikzfig{plate_DAG}
%	\end{equation}
%	\ak{TODO: Highlight that string diagram plate notation allows for highlighting symmetry.}
%
%	Sometimes we also use the following notation:
%	\begin{equation}
%		\tikzfig{plate_definition2}
%	\end{equation}
	%\ak{Does this still correspond to any categorical structure?}
	%\tob{I'm not sure, but like that it would be natural to also allow varying codomains $X_i$}
	
%	\begin{example}
	Using plate notation, the statement of our Synthetic de Finetti Theorem via \cref{eq:exc_cond_iid} reads as follows: 
	For $\A = \{1,\dots,n\}$, we have%If $p: I \to X^{\N}$ is exchangeable, then
	\begin{equation}\label{eq:deFinetti_plate1}
		\tikzfig{deFinetti_plate1}
	\end{equation}
	The morphism $p_{|\tail}$ inside the plate carries no index because it does not vary with the index $i$.
	
%	In an a.s.-representable Markov category, the de Finetti Theorem implies that every exchangeable morphism $p: A \to X^{\N}$ decomposes into
%	\begin{equation}
%		\tikzfig{deFinetti_plate2}
%	\end{equation}
%	We refer to \cite{fritz2021definetti} for the proof of this statement.
	
%	\anto{I don't like that only the last one output isn't labelled. Can't we write it in a more sensible way? But also, is it really worth it spelling this out? We also never use the plate notation for infinite wires (although we could).}
%	\end{example}
	
	Since plate diagrams depict morphisms in the Markov category, we can compose them in sequence and in parallel.
	For instance, given two index sets $\mathcal{I} = \mathcal{J} = \{1,2\}$, we have the following graphical representation of sequential composition:
	\begin{equation}\label{eq:plate_composition}
		\tikzfig{plate_composition}
	\end{equation}
	and parallel composition:
	\begin{equation}\label{eq:plate_composition_2}
		\tikzfig{plate_composition_2}
	\end{equation}
	
	For the Aldous--Hoover Theorem, we use overlapping plates in order to represent matrices of morphisms with two indices.
%	To this end, a notation of overlapping plates is useful, which 
	This notation can be defined using sequential and parallel composition of plates.
	% Additionally, we employ the projection maps of the Kolmogorov product.
	% Specifically, in the following diagram, the projection $\pi_i \colon V^{\mathcal{I}} \to V$ discards all inputs except the $i$-th instance, and similarly for the morphism $\pi_j \colon W^{\mathcal{J}} \to W$.

	\begin{notation}[Overlapping plates]\label{not:plate_overlap}
		Given families of morphisms
		\begin{gather*}
			(f_i \colon A \to X_i \otimes V_i)_{i \in \mathcal{I}}, \qquad (g_j \colon B \to W_j \otimes Y_j)_{j \in \mathcal{J}},	\\[2pt]
			(h_{ij} \colon V_i \otimes W_j \to Z_{ij})_{i \in \mathcal{I}, \, j \in \mathcal{J}}
		\end{gather*}
		we introduce the following diagrammatic notation:\footnote{We implicitly use the natural isomorphisms $(X \otimes V)^{\mathcal{I}} \cong X^{\mathcal{I}} \otimes V^{\mathcal{I}}$ and $(W \otimes Y)^{\mathcal{J}} \cong W^{\mathcal{J}} \otimes Y^{\mathcal{J}}$.}
		\begin{equation}\label{eq:plate_overlap}
			\tikzfig{plate_overlap}
		\end{equation}
		where the morphisms $\pi_i \colon V^{\mathcal{I}} \to V_i$ and $\pi_j \colon W^{\mathcal{J}} \to W_j$ are the projections of the Kolmogorov products, discarding all inputs except the $i$-th and $j$-th instances, respectively.
	\end{notation}
%	\togo{Extend the notation to non-isomorphic outputs}
	The triple wires in \cref{eq:plate_overlap} are used to distinguish Kolmogorov products of doubly indexed families from those with a single index, which are typically represented by a double wire. 
	Our intention is to remind the reader that when triple wires are used, the resulting outputs should be interpreted as entries of a matrix, whether finite or infinite in size.
	
	For an explicit example of the overlapping plate notation, see \cref{fig:AH_2x2}.

%\subsection{different Ideas for plate notations (to be deleted later)}
%
%Plate notation
%
%\begin{equation*}
%			\tikzfig{idea_plate_2}
%\end{equation*}

%\ak{Drawing instruction: The plate is drawn with the dotted\_plate style. The double wires inside the plate are drawn with the styles d-wire1 plate (for the single plate) and d-wire2 plate (for the overlapping plate) to have matching backgrounds. Textsize of $i \in N$ is tiny}
%\anto{Tikzit users: after refreshing the styles you will find the dotted plate style for drawing strings: you can just use it for wires as before, but when you finish up the path you need to select the nodes and say to tikzit this is a path. On my computer (windows), the shorthand for this is Ctrl+P.}

\subsection{Multivariate Conditional Independence}
\label{sec:cond_ind_2}
	
	The conditional independence appearing in the de Finetti Theorem (\cref{thm:definetti}) and its various extensions cannot be stated as a single instance of \cref{def:cond_ind}.
	We can write them either as a family of basic conditional independence relations (see \cref{cor:partition}) or via the following more general notion.
	%In words, this says that the outputs within a family are mutually (conditionally) independent.
	
	\begin{definition}
		\label{def:mut_cond_ind}
		Consider a family of objects $(X_i)_{i \in \A}$ such that the Kolmogorov product $X^\A$ exists. 
		We say that a state $p\colon I \to X^\A \tensor W$ displays the \newterm{conditional independence} $\perp_{i \in \A} X_i \mid W$ if there is a family of morphisms $(f_i \colon W \to X_i)_{i \in \A}$ satisfying
		\begin{equation}
			\label{eq:display_cond_ind_n}
			\tikzfig{display_cond_ind_n}
		\end{equation}
		%where we use the plate notation (see \cref{sec:plate_notation}).
	\end{definition}

	\begin{remark}
		\label{rem:mut_cond_ind}
		As a straightforward marginalization of the equation shows, each $f_i$ in \cref{eq:display_cond_ind_n} is necessarily a conditional of the marginal of $p$ on $X_i \otimes W$ given $W$ \cite[Remark~12.2]{fritz2019synthetic}.
	\end{remark}

	Let us give an example of such a multivariate conditional independence relation.
	\cref{thm:definetti} shows that every exchangeable state $p\colon I \to X^{\N}$ displays the conditional independence\footnotemark{}
	\footnotetext{In \cref{eq:definetti_cond_ind}, superscripts are used instead of subscripts since the outputs in the context of de Finetti Theorem are necessarily isomorphic and the index $i$ merely communicates their position in the infinite sequence (see \cref{nota:ah}).}%
	\begin{equation}\label{eq:definetti_cond_ind}
		\bigperp_{i = 1, \dots, n} X^i \largemid X^{\compl{\A}}
	\end{equation}
	for every $n \in \N$.
	In fact, this conditional independence can be thought of as the main content of the theorem, since the theorem can be easily derived from it by combining \cref{rem:mut_cond_ind} with the shift invariance of $p$.
%	On the right-hand side of the equation, we can use the state $p \colon I \to X^{\N}$ instead of the marginal\footnotemark{}
%	\footnotetext{Here, $(X^s)^n$ refers to the sequential composition of $n$ shift maps, i.e.\ the marginalization of the first $n$ elements of the sequence.}
%	\begin{equation}
%		\tikzfig{shift_equiv}
%	\end{equation}
%	due to the equivalence between permutation and shift invariance \cite[Lemma 5.1]{fritz2021definetti}. 

	Let $X^\A$ be a Kolmogorov product, and consider a partition $P$ of the set $\A$, whose elements are thus disjoint subsets of $\A$. 
	Then we can write $X^{\A}$ as a Kolmogorov product over the set $P$ itself via the obvious natural isomorphism
	\begin{equation}
		X^\A \cong \bigotimes_{\I \in P} X^\I.
	\end{equation}
	In particular, we can ask whether a state $I \to X^{\A} \otimes W$ displays conditional independence ${\perp_{\I \in P} X^\I \mid W}$. 
%	With this in mind, we state the following result.

	\begin{lemma}[Partition Lemma]\label{lem:partition}
		Consider a morphism $p\colon I \to X^{\A} \otimes W$ and two partitions $P_1$ and $P_2$ of $\A$ such that the conditional independence relations
		\begin{equation}\label{eq:partition_assumption}
			\bigperp_{\I \in P_1} X^\I \mid W, \qquad \bigperp_{\J \in P_2} X^\J \mid W
		\end{equation}
		hold.
		Then, also the conditional independence
		\begin{equation}\label{eq:partition_statement}
			\bigperp_{\I \in P_1 \vee P_2} X^\I \mid W
		\end{equation}
		holds, where $P_1 \vee P_2$ is the partition obtained by intersecting each element of $P_1$ with each element of $P_2$.
	\end{lemma}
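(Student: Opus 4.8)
The plan is to construct explicit witnessing morphisms for the refined relation by refining one family of witnesses blockwise and then gluing it to the other family, with the existence of conditionals doing the gluing. Write $P_1 = \{\I\}$ and $P_2 = \{\J\}$, so that the blocks of $P_1 \vee P_2$ are the nonempty intersections $\I \cap \J$, and let $p_W \colon I \to W$ be the $W$-marginal of $p$. By \cref{eq:partition_assumption} there are families $(f_\I \colon W \to X^\I)_\I$ and $(g_\J \colon W \to X^\J)_\J$ witnessing the two hypotheses in the sense of \cref{def:mut_cond_ind}: in each case $p$ equals $p_W$ followed by copying $W$ and applying the $f_\I$ (respectively the $g_\J$) to the copies while retaining one $W$ output.

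First I would refine the $P_2$-witnesses blockwise. Fixing a block $\I \in P_1$ and discarding every output outside $\I$ turns the $P_2$-factorization of $p$ into a factorization of the marginal $q_\I \colon I \to X^\I \otimes W$ of $p$ on $(X^\I, W)$: on each block $\J$ only the part indexed by $\I \cap \J$ survives, so the morphisms $g'_{\I\J} \coloneqq \discard \comp g_\J \colon W \to X^{\I \cap \J}$ (with empty intersections contributing nothing) witness that $q_\I$ displays $\perp_{\J} X^{\I \cap \J} \mid W$. This step is a pure marginalization and needs no hypothesis beyond the existence of the relevant Kolmogorov products.

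Next I would glue the two layers. Marginalizing the $P_1$-factorization of $p$ to $(X^\I, W)$ exhibits $f_\I$ as a conditional of $q_\I$ on $X^\I$ given $W$, while the refined factorization of the previous step exhibits $\left( \bigotimes_\J g'_{\I\J} \right) \comp \cop$ as another such conditional. Since conditionals are almost surely unique (\cref{rem:mut_cond_ind}), we obtain $f_\I \ase{p_W} \left( \bigotimes_\J g'_{\I\J} \right) \comp \cop$. Substituting this almost-sure equality into the $P_1$-factorization of $p$ collapses the two nested copies of $W$ into a single copy indexed by the blocks of $P_1 \vee P_2$ and exhibits $p$ as $p_W$ followed by copying $W$ and applying the family $(g'_{\I\J})_{\I,\J}$; this is exactly the factorization witnessing $\perp_{\I \in P_1 \vee P_2} X^\I \mid W$.

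The main obstacle I expect is this gluing step: keeping careful track of which equations hold on the nose and which hold only $p_W$-almost surely, and justifying the substitution of $f_\I$ by an almost-surely-equal morphism inside the larger diagram. The substitution is legitimate precisely because the input wire of each $f_\I$ is supplied by a deterministic copy of $p_W$, so that $p_W$-almost-sure equality propagates through the composite; this is the one place where the existence of conditionals (part of our standing \cref{ass:de_finetti}), rather than mere marginalization, is used. The bookkeeping for possibly infinite partitions — handling the nested copies of $W$ and the tensor products over blocks via the universal properties of the Kolmogorov products and the natural isomorphism $X^\A \cong \bigotimes_{\I} X^\I$ — is routine but worth stating explicitly.
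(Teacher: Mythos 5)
Your proof is correct and is essentially the paper's own argument: the paper likewise marginalizes the $P_2$-factorization onto each block $\I \in P_1$ (discarding the $X^{\J \setminus (\I \cap \J)}$ parts of the $g_\J$ outputs), identifies the resulting expression with the $f_\I$-conditional of that marginal, and substitutes it back into the $P_1$-factorization to obtain the $P_1 \vee P_2$ witnesses. The only inaccuracy is your attribution of the gluing step to the existence of conditionals: two conditionals of the same state given $W$ are $p_W$-almost-surely equal by the very definition of almost-sure equality, and propagating that equality through the copied $W$-wires uses only coassociativity of copying, so the lemma (consistent with the paper's proof) never actually needs the conditionals assumption, only the relevant Kolmogorov products.
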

	\begin{proof}
		Consider a subset $\I \subseteq \A$ that appears in the partition $P_1$.
		Marginalizing the $X_i$ for $i \in \A \setminus \I$ and using the conditional independence relations from \eqref{eq:partition_assumption}, we obtain
		\begin{equation}\label{eq:partition_cond_ind_proof1}
			\tikzfig{partition_cond_ind_proof1}
		\end{equation}
		for a suitable family $(g_\J \colon W \to X^{\J})_{\J \in P_2}$, where the output of each $g_\J$ is factored into a tensor product of $X^{\I \cap \J}$ and $X^{\J \setminus (\I \cap \J)}$, and the latter is discarded.
%		where in the last string diagram we marginalize the outputs of $X_q$ that are not outputs of $X_p$. 
		We can apply this for each $\I \in P_1$ to get
		\begin{equation}\label{eq:partition_cond_ind_proof2}
			\tikzfig{partition_cond_ind_proof2}
		\end{equation}
		which concludes the proof.
	\end{proof}

	\begin{corollary}\label{cor:partition}
		Consider a morphism $p\colon I \to X^{\A}\otimes W$ for a finite set $\A$.
		Then we have 
		\begin{equation}
			\bigperp_{i \in \A} X_i \mid W \qquad \iff \qquad  X_i \perp X^{\A \setminus\lbrace i \rbrace} \mid W \quad \forall i\in \A.
		\end{equation}
	\end{corollary}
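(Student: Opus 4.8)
The plan is to prove the two implications separately, reserving the Partition Lemma (\cref{lem:partition}) for the reverse direction, which is the substantive one; the forward direction should follow from a routine coarsening observation about \cref{def:mut_cond_ind}.

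First I would treat the forward implication. Assuming $p$ displays $\bigperp_{i \in \A} X_i \mid W$, I take the witnessing family $(f_i \colon W \to X_i)_{i \in \A}$ from \cref{eq:display_cond_ind_n}. For a fixed $i \in \A$, I would bundle the remaining morphisms into a single $g \colon W \to X^{\A \setminus \{i\}}$, namely the plate $\left(\bigotimes_{j \neq i} f_j\right) \comp \cop_W$. Coassociativity and cocommutativity of copy then let me regroup the factorization of $p$ into the bipartite form of \cref{eq:cond_ind} with witnesses $f_i$ and $g$, which is exactly $X_i \perp X^{\A \setminus \{i\}} \mid W$. As $i$ is arbitrary, this yields all the pairwise relations at once.

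For the reverse implication I would first recognize each hypothesis $X_i \perp X^{\A \setminus \{i\}} \mid W$ as the multivariate relation attached to the two-block partition $P_i \coloneqq \{\{i\}, \A \setminus \{i\}\}$. The crucial combinatorial point is that the common refinement of these partitions is the discrete one: intersecting the blocks of $P_i$ and $P_j$ separates $i$ from $j$, so $\bigvee_{i \in \A} P_i = \{\{i\} : i \in \A\}$. I would then apply \cref{lem:partition} to $P_1$ and $P_2$ to obtain the relation for $P_1 \vee P_2$, feed the result back together with $P_3$, and iterate; since $\A$ is finite, after at most $\lvert\A\rvert - 1$ applications I reach the relation for the discrete partition, which is precisely $\bigperp_{i \in \A} X_i \mid W$.

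The hard part will be the bookkeeping in the reverse direction: checking that the iterated joins really collapse to singletons and that each intermediate relation is again of the form needed to reapply the Partition Lemma. The forward direction is only diagram rearrangement, and beyond the existence of conditionals already invoked by \cref{lem:partition}, no further axioms from \cref{ass:de_finetti} are required.
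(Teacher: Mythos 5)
Your proof is correct and follows exactly the paper's approach: the forward direction is a definitional regrouping of the factorization in \cref{eq:display_cond_ind_n}, and the converse is an application of the Partition Lemma (\cref{lem:partition}). In fact, your write-up is more careful than the paper's one-line proof, since you make explicit the iterated joins $P_1 \vee P_2 \vee \cdots$ collapsing to the discrete partition, a bookkeeping step the paper leaves implicit in the phrase ``an application of \cref{lem:partition}.''
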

	\begin{proof}
		The ($\Rightarrow$) direction holds by definition, while the converse is an application of \cref{lem:partition}.
	\end{proof}

\section{A Synthetic Aldous--Hoover Theorem}
\label{sec:aldous_hoover}

	\subsection{Statements of the Theorem}
	\label{sec:AH_statements}

%	The statement of the synthetic Aldous--Hoover Theorem gives a characterization of particular morphisms, as defined below.
	We begin the main developments of this paper by defining the class of morphisms to be characterized by the Synthetic Aldous--Hoover Theorem.
	 % characterize

	\begin{definition}
		A morphism $p \colon I \to X^{\N \times \N}$ is \newterm{row-column exchangeable} if for every finite permutation $\sigma \colon \N \to \N$, we have 
		\begin{equation}\label{eq:rce_condition}
			\tikzfig{rce_condition}
		\end{equation}
	\end{definition}

	The use of triple wires in \cref{eq:rce_condition} expresses that the outputs are arranged in a matrix, as discussed at the end of \cref{sec:plate_notation}.
	The remainder of this section is devoted to a proof of a stronger version (\cref{thm:AldousHoover}) of the following key result.
%	\togo{I still think that \cref{thm:AldousHoover} is the main result and it is the result we are proving in the remainder of this section, so I find the above sentence not the best description.
%	I don't like a narrative where we say something (e.g.\ we devote the rest of the section to proving this) and a bit later we contradict it (e.g. by saying that actually we will prove something stronger), because as a reader it would makes me distrust the intentions of the writer.}
%	\tob{I don't see it as a contradiction to say that the rest is devoted to the proof: formulating the stronger statement is part of that proof. But we should talk about what to call the ``main result'', if anything}
%	\anto{I tend to agree with both: the following is the main theorem, but if we prove something stronger then it's good to point it out. I changed it accordingly, hope you like it.}

	\begin{theorem}[Synthetic Aldous--Hoover Theorem, weak form]\label{thm:AldousHooverWeak}
		Let $\cC$ be a Markov category satisfying \cref{ass:de_finetti}.
		Then for every row-column exchangeable state $p \colon I \to X^{\N \times \N}$, there exist objects $A, B, C \in \cC$ and morphisms
		\begin{equation}
			q \colon I \to A, \qquad f \colon A \to B, \qquad g \colon A \to C, \qquad h \colon B \otimes A \otimes C \longrightarrow X
		\end{equation}
		such that we have
		\begin{equation}
			\tikzfig{aldous_hoover_standardplate}
		\end{equation}
	\end{theorem}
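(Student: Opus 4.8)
The goal is to derive a decomposition of a row-column exchangeable state into four morphisms q, f, g, h. Let me think about how to approach this via the de Finetti theorem and conditional independence.

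Key idea: apply de Finetti along rows, then along columns, get conditional independence structure, then combine.The plan is to build the decomposition by combining the Synthetic de Finetti Theorem with repeated extraction of conditionals, exploiting the two separate exchangeability symmetries present in a row-column exchangeable state. The object $A$ will play the role of the ``external factors'' (the full tail information common to the whole matrix), $B$ the row-type information, and $C$ the column-type information.

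First I would set $A \coloneqq X^{\N \times \N}$ and take $q \coloneqq p$, so that $q$ already encodes all global randomness. The task is then to factor a single matrix entry, say $X^{1,1}$, through row and column summaries that are themselves functions of $A$. To do this I would exploit that, for fixed column index, the family $(X^{i,1})_{i \in \N}$ is exchangeable in $i$ (because permuting rows is a symmetry of $p$), and symmetrically for rows. Applying the de Finetti Theorem (\cref{thm:definetti}) to the row direction produces a row-tail object that conditionally renders the entries in a fixed column iid; this furnishes the morphism $f \colon A \to B$ extracting the row summary $B$. Symmetrically, de Finetti in the column direction yields $g \colon A \to C$ extracting the column summary $C$.

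The heart of the argument is to show that, conditioned on $A$ together with the row summary $B$ and column summary $C$, the entry $X^{1,1}$ is independent of everything else, so that it can be produced by a single morphism $h \colon B \otimes A \otimes C \to X$ applied entrywise under an overlapping plate (\cref{not:plate_overlap}). I would assemble this from the conditional independence relations coming from the two applications of de Finetti, using the semigraphoid properties of \cref{lem:semigraphoid}---in particular weak union and contraction---to merge the row-wise and column-wise independences. The \nameref{lem:partition}{} is the natural tool for combining the two partition-indexed conditional independence relations (one from the row direction, one from the column direction) into a joint statement indexed by the common refinement, which here separates individual matrix entries. Once the entrywise conditional $h$ is obtained from the existence of conditionals, the overlapping-plate identity of \cref{eq:plate_overlap} repackages the pieces into exactly the claimed diagram.

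The main obstacle I anticipate is the interaction between the two symmetries: a naive application of de Finetti to rows and then to columns does not automatically commute, and one must verify that the row summary $f$ can be taken compatibly with the column symmetry (and vice versa) so that the entrywise conditional genuinely depends only on the triple $(B, A, C)$ and not on finer information. Establishing that the relevant conditional independences can be chosen \emph{simultaneously}---rather than merely existing separately in each direction---is where the Cauchy--Schwarz axiom does the essential work, as it is what upgrades the separate almost-sure equalities into a single coherent factorization. I would isolate this as the technical core, proving the key conditional independence relations as separate lemmas (in the spirit of \cref{sec:3_cond_ind}) before assembling the final diagram.
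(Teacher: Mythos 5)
Your overall strategy coincides with the paper's: take $A \coloneqq X^{\N\times\N}$ and $q \coloneqq p$, extract row and column summaries as de Finetti-type tail conditionals, and assemble an entrywise conditional using the semigraphoid properties and the Partition Lemma (this is the architecture of \cref{sec:3_cond_ind,sec:mainProof}, with the weak form then following from the strong form \cref{thm:AldousHoover} via the universal property of the Kolmogorov power). However, there is a genuine gap at exactly the point you flag as the main obstacle but do not resolve. The final diagram requires that all row summaries and all column summaries be \emph{mutually} independent given the array tail alone, in particular (in \cref{nota:ah})
\begin{equation}
	X^{\A,\compl{\A}} \perp X^{\compl{\A},\A} \mid X^{\compl{\A},\compl{\A}},
\end{equation}
which is the content of the paper's \cref{lem:independence3}. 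This cannot be obtained from ``the conditional independence relations coming from the two applications of de Finetti'' together with semigraphoid reasoning: de Finetti applied to rows conditions on $X^{\compl{\A},\N} = X^{\compl{\A},\A} \otimes X^{\compl{\A},\compl{\A}}$, de Finetti applied to columns conditions on $X^{\N,\compl{\A}}$, and since these conditioning objects differ, the Partition Lemma (which requires a common $W$) does not apply. Moreover, decomposition and weak union only discard outputs or enlarge the conditioning object, and contraction can shrink the conditioning object only if the independence relative to the smaller conditioning object is already known{\,---\,}which here is precisely the missing cross-independence. The paper closes this hole with a genuinely new ingredient: permutation covariance implies shift covariance (\cref{prop:shift_covariance}), proved via the Cauchy--Schwarz axiom, whose consequence \cref{cor:shift_covariance} is exactly the relation displayed above. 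So your instinct that Cauchy--Schwarz ``does the essential work'' on the row/column interaction is correct, but the mechanism is not an upgrade of separate almost-sure equalities into a coherent factorization; it is that a conditional of an exchangeable state is permutation covariant, hence shift covariant, hence yields this conditional independence. Without identifying and proving that statement, your assembly step fails.

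A second, smaller omission: the theorem asserts a \emph{single} morphism $h \colon B \otimes A \otimes C \to X$ serving all entries (and a single $f$ and $g$), whereas the construction you describe produces entrywise conditionals $h_{ij}$. Removing the dependence on $(i,j)$ requires a separate argument; the paper does this at the end of \cref{sec:mainProof} by applying transpositions $(i\ k)$ to rows and $(j\ \ell)$ to columns and invoking row-column exchangeability to conclude that all the $h_{ij}$ may be taken equal, and it obtains the index-independence of $f$ and $g$ from the de Finetti Theorem. This step needs to be included for your argument to reach the stated form of the theorem.
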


	Before we turn to the proof, let us indicate how this result can also be phrased in a functional form based on the concept of omnirandom states (\cref{def:omnirandom}).

	\begin{corollary}[Synthetic Aldous--Hoover Theorem, functional form]\label{thm:AH_functional}
		Let $\cC$ be a Markov category satisfying \cref{ass:de_finetti} and having an omnirandom state $r \colon I \to R$.
		Then for every row-column exchangeable morphism $p \colon I \to X^{\N \times \N}$, there exists a deterministic $k \colon R^{\otimes 4} \to X$ such that we have
		\begin{equation}
			\label{eq:aldous_hoover_functional}
			\tikzfig{aldous_hoover_functional}
		\end{equation}
	\end{corollary}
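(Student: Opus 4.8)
The plan is to derive the functional form directly from the weak form (\cref{thm:AldousHooverWeak}) by replacing the four random morphisms $q,f,g,h$ with deterministic ones that draw all their randomness from independent copies of the omnirandom state $r$. First I would apply \cref{thm:AldousHooverWeak} to the given row-column exchangeable $p$, obtaining objects $A,B,C$ and morphisms $q\colon I \to A$, $f \colon A \to B$, $g \colon A \to C$, and $h \colon B \tensor A \tensor C \to X$ that exhibit $p$ in the overlapping-plate form. The goal is then to absorb all of $q,f,g,h$ into a single deterministic $k \colon R^{\otimes 4} \to X$.

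The main tool is the defining property of the omnirandom state $r$ (\cref{def:omnirandom}): every morphism can be realised as a deterministic function of its input together with an independent copy of $r$. Applying this to each of the four morphisms yields deterministic
\begin{equation*}
	\widetilde{q}\colon R \to A, \quad \widetilde{f}\colon A \tensor R \to B, \quad \widetilde{g}\colon A \tensor R \to C, \quad \widetilde{h}\colon B \tensor A \tensor C \tensor R \to X
\end{equation*}
with $q = \widetilde{q} \comp r$, $f = \widetilde{f} \comp (\id_A \tensor r)$, $g = \widetilde{g} \comp (\id_A \tensor r)$, and $h = \widetilde{h} \comp (\id_{B \tensor A \tensor C} \tensor r)$. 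I would then define $k$ on inputs $(\rho_1,\rho_2,\rho_3,\rho_4)$ by first computing $a \coloneqq \widetilde{q}(\rho_1)$, copying it, and setting
\begin{equation*}
	k(\rho_1,\rho_2,\rho_3,\rho_4) = \widetilde{h}\bigl(\widetilde{f}(a,\rho_2),\, a,\, \widetilde{g}(a,\rho_3),\, \rho_4\bigr).
\end{equation*}
Since $\cop$ is deterministic and deterministic morphisms are closed under composition and tensoring, $k$ is deterministic.

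It remains to check that feeding $k$ with four plates of independent $r$-copies{\,---\,}one common copy $\rho_1$ for the whole matrix, one copy $\rho_2$ per row, one copy $\rho_3$ per column, and one copy $\rho_4$ per entry{\,---\,}reproduces exactly the plate structure of the weak form, and hence equals $p$. This is the step that needs the most care, and it is where determinism of $\widetilde{q},\widetilde{f},\widetilde{g}$ is essential: because $\widetilde{q}$ is deterministic, recomputing $a = \widetilde{q}(\rho_1)$ from the shared copy $\rho_1$ inside every entry agrees with computing $a$ once and copying it (naturality of $\cop$ with respect to deterministic morphisms), so all entries see a consistent common value; the same argument makes the row value $\widetilde{f}(a,\rho_2)$ consistent across a row and the column value $\widetilde{g}(a,\rho_3)$ consistent down a column. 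The independence of the plate copies of $r$ then reproduces precisely the conditional independences built into the overlapping plates, so substituting the derandomisations of $q,f,g,h$ and bundling the deterministic pieces into $k$ converts the weak-form diagram into \cref{eq:aldous_hoover_functional}.

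The only genuine subtlety I anticipate is that \cref{def:omnirandom} must supply the derandomisation not merely for the state $q$ but also for the parametrised morphisms $f,g,h$. If the definition is phrased for states alone, one invokes it in the parametric Markov category (\cref{sec:param}), whose omnirandom states restrict to exactly the required deterministic factorisations; the stability of our assumptions under parametrisation proved there guarantees that this is legitimate.
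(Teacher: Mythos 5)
Your proposal is correct and takes essentially the same route as the paper: apply \cref{thm:AldousHooverWeak}, derandomise $q$, $f$, $g$, $h$ via the omnirandom state, bundle the resulting deterministic pieces (together with copy morphisms) into a single deterministic $k$, and justify the plate manipulation by the fact that deterministic morphisms commute with copying and hence move into plates. Your final worry is moot, since \cref{def:omnirandom} is already stated for arbitrary morphisms $f \colon X \to Y$ rather than just states, so no detour through parametric Markov categories is needed.
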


	\begin{proof}
		By applying omnirandomness to the morphisms $q$, $f$, $g$, and $h$ as in the statement of \cref{thm:AldousHooverWeak}, we obtain
		\begin{equation}
			\tikzfig{aldous_hoover_functional_proof}
		\end{equation}
		where $q'$, $f'$, $g'$, and $h'$ are deterministic. 
		By setting
		\begin{equation}
			\tikzfig{aldous_hoover_functional_proof2}
		\end{equation}
		we obtain the statement because deterministic morphisms (which includes copy morphisms) can move into plates by definition.
%		The copy, which is deterministic, eventually enters the plates as well.
%		\togo{``eventually enters the plates'' is a bit vague/hard to understand for anyone who is not following all the nuanced of what is going on.}
%		\anto{I agree with Tomas, also: do we really need to spell this out? I think the reader may be ok even without it.}
	\end{proof}
	
	\begin{remark}[Measure-theoretic corollaries of the Synthetic Aldous-Hoover Theorem]\hfill\label{rem:AH_traditional}
    \begin{enumerate}
        \item 
	        Taking $\cC = \BorelStoch$ in \cref{thm:AH_functional} reproduces the most common version of the Aldous--Hoover Theorem.
	        First, note that the omnirandom state $r \colon I \to [0,1]$ can be chosen to be the Lebesgue measure (\cref{lem:omnirandom}).
		Second, if $(\mathcal{X}_{ij})_{i,j \in \N}$ is an array of real-valued random variables with row-column exchangeable joint distribution, then we conclude that there are independent and uniform $[0,1]$-valued random variables $\alpha$, $(\xi_i)_{i \in \N}$, $(\eta_j)_{j \in \N}$ and $(\zeta_{ij})_{i,j \in \N}$ as well as a measurable function $k \colon [0,1]^4 \to \R$ such that
		\begin{equation}
			\bigl( \mathcal{X}_{ij} \bigr)_{i,j\in \N} \stackrel{d}{=} \bigl( k(\alpha, \xi_i, \eta_j, \zeta_{ij}) \bigr)_{i,j\in \N},
		\end{equation}
		where $\stackrel{d}{=}$ denotes equality in distribution.
		Indeed, \cref{eq:aldous_hoover_functional} is exactly the desired equality in distribution, where the extra variables $\alpha$, $\xi_i$, $\eta_j$ and $\zeta_{ij}$ are the outputs of the  occurring instances of the omnirandom state.

       \item 
	       Besides $\BorelStoch$, we can instantiate \cref{thm:AldousHooverWeak} in other categories, such as $\AnaStoch$ and $\UnivStoch$, since by \cref{ex:assumptions} they both satisfy \cref{ass:de_finetti}. 

	       Despite the full Markov subcategory inclusions
	       \[
		       \BorelStoch \subseteq \AnaStoch \subseteq \UnivStoch,
	       \]
	       the statement for each category does not directly imply the statement for its subcategories, since \cref{thm:AldousHooverWeak} does not guarantee that the extra objects $A$, $B$ and $C$ also live in the subcategory.
	       This is different for the strong form of the Aldous--Hoover Theorem, which we will state and prove as \cref{thm:AldousHoover}: there, instantiating the statement in $\UnivStoch$ gives the strongest measure-theoretic result that we have, since it trivially specializes to full Markov subcategories like $\BorelStoch$ and $\AnaStoch$.
%        Therefore, if $p:I\to X^{\mathbb{N}\times \mathbb{N}}$ is a row-column exchangeable probability distribution on a standard Borel space, analytic measurable space, and universally measurable space, respectively, then there exists Markov kernels 
%        $f:B\to A$, $q:I\to B$, $g:B\to C$, and $h:A\otimes B\otimes C \to X$  between standard Borel spaces, analytic measurable spaces, and universally measurable spaces, respectively, such that we have the decomposition
%        \[
%            p(\cdot)=\bigintsss  \left(\bigotimes_{(n,n)\in \mathbb{N}\times \mathbb{N}}\bigintsss \bigintsss   h(\mid a,b,c)f(\mathrm{d}a \mid b)g(\mathrm{d}c\mid b)\right)(\cdot) q(\mathrm{d}b).
%        \]
    \end{enumerate}
	As we can see, our approach can be used to give a unified treatment of multiple Aldous-Hoover Theorems. 
	The measure-theoretic details are only relevant for showing that the respective Markov category satisfies \cref{ass:de_finetti}.
	The remainder of each proof is synthetic and common across the different versions.\footnotemark
	\footnotetext{There also exists an Aldous-Hoover Theorem for Radon distributions \cite[Theorem 4.1]{towsner2023AH}, which applies to an even broader class of spaces than the universally measurable ones \cite[Example B.36]{forre2021conditional}.
	We have not yet been able to rederive this result from ours.}
	%Since conditional distributions are generally not continuous, this result cannot be derived as a consequence of \cref{thm:AldousHooverWeak}.
	%It is nevertheless conceivable that there is another synthetic Aldous--Hoover theorem which does not assume conditionals to exist.
%        \togo{In order for the Townser result to be connected with the remark above, can we say whether it is conceivable that the Synthetic Aldous-Hoover Theorem recovers it?
%        I suppose not, since we will not have conditionals.}
	\end{remark}

%	To write the statements we will encounter 
%	In fact, we will prove a somewhat stronger statement than \cref{thm:AldousHooverWeak}.
	Let us introduce the following notation in order to state the strong form of our Synthetic Aldous--Hoover Theorem more concisely.
%	\anto{If you accept the paranthesis I added before the statement of \cref{thm:AldousHooverWeak}, this should be rephrased.
%	I suggest ``As anticipated, this section is devoted to proving a strong version of \cref{thm:AldousHooverWeak}, which will be stated momentarily. Let us first introduce some notation that will be used throughout.''}
%	In fact, we have already encountered it in one particular case: see \cref{thm:definetti}.
	\begin{notation}\label{nota:ah}
		Let $n \in \N$.
		\begin{enumerate}
			\item We introduce the following shorthands:
				\begin{equation*}
					\A\coloneqq \lbrace 1, \dots , n \rbrace, \qquad
					\compl{\A}\coloneqq \N \setminus \A,\qquad
					\complsm{i}\coloneqq \A \setminus \lbrace i \rbrace \text{ for any }i \in \A.
				\end{equation*}
			\item In order to save space in large formulas, we also replace the symbol $\times$ with a simple comma. %\footnote{This also matches up well with \cref{nota:conditional_independence} on conditional independence.}
				That is, for $\I, \J \subseteq \N$, we write $X^{\I,\J}$ to denote the Kolmogorov power $X^{\I \times \J}$. 
			\item Additionally, we replace $\lbrace i \rbrace$ by $i$ in the interest of cleaner notation.
				For example, $X^{i,\A}$ stands for $X^{\lbrace i \rbrace,\A}$ and $X^i$ stands for $X^{\{i\}}$ as in the \nameref{sec:intro}.
		\end{enumerate}
	\end{notation}

	As indicated in \cref{fig:notation}, we use these conventions mainly in order to denote subproducts of the Kolmogorov power $X^{\N,\N}$.
	Although all Kolmogorov powers with countably infinite exponent are isomorphic, these subproducts are distinguished also by how $X^{\N,\N}$ projects onto them.
%	In probabilistic terminology, 
	They pick out particular marginals of a morphism $p \colon I \to X^{\N,\N}$.
	For example, $X^{\A,\A}$ refers to those instances of $X$ which correspond to elements of $\A \times \A \subseteq \N \times \N$.
	On the other hand, $X^{\compl{\A}, \complsm{j}}$ consists of outputs labelled by elements of the set
	\begin{equation}
		\Set*[\big]{ (i,k)  \given  i \in \N \setminus \A \text{ and } k \in \A \setminus \{j\} } \subseteq \N \times \N.
	\end{equation}
	Moreover, the full Kolmogorov power naturally factorizes as
	\begin{equation}\label{eq:array_decomp}
		X^{\N,\N} \cong X^{\compl{\A},\A} \otimes X^{\A,\A} \otimes X^{\A,\compl{\A}} \otimes X^{\compl{\A},\compl{\A}}. 
	\end{equation}
	
	\begin{figure}[t]
		\begin{subfigure}[b]{0.45\textwidth}
			\centering
			\tikzsetfigurename{notations}
			\begin{tikzpicture}[scale=0.6]

\draw[draw=none, fill=blue!20!white] (2,0) -- (6,0) -- (6,-0.75) -- (2,-0.75) -- cycle;
\draw[draw=none, fill=blue!20!white] (2,-2) -- (6,-2) -- (6,-1) -- (2,-1) -- cycle;

\draw[draw=none, fill=blue!20!white] (0,-2) -- (0,-6) -- (0.25,-6) -- (0.25,-2) -- cycle; 
\draw[draw=none, fill=blue!20!white] (2,-2) -- (2,-6) -- (0.5,-6) -- (0.5,-2) -- cycle; 

\draw[draw=none,fill=black!20!white] (6,-2) -- (2,-2) -- (2,-6) -- (6,-6) -- cycle;

\draw[draw=none,fill=black!20!white] (0.25,-2) -- (0.25,-6) -- (0.5,-6) -- (0.5,-2) -- cycle;
\draw (0.25,-2) -- (0.25,-6);
\draw (0.5,-2) -- (0.5,-6);

\draw[draw=none,fill=black!20!white] (2,-0.75) -- (6,-0.75) -- (6,-1)--  (2,-1) -- cycle;
\draw (2,-0.75) -- (6,-0.75);
\draw (2,-1) -- (6,-1);

\draw (6,-2) -- (0,-2);
\draw (2,0) -- (2,-6);
\draw[very thick] (6,0) -- (0,0) -- (0,-6);

\draw[decorate, decoration = {brace}, thick] (0,0.2) -- (2,0.2) node[midway, yshift=0.3cm] {\scriptsize $n$};
\draw[decorate, decoration = {brace}, thick] (-0.2,-2) -- (-0.2,0) node[midway, xshift=-0.3cm] {\scriptsize $n$};

\node at (1,-1) {$X^{\A,\A}$};
\node at (4,-4) {$X^{\compl{\A},\compl{\A}}$};

\node at (7,-0.9) {$X^{i, \compl{\A}}$};
\node at (0.4,-6.8) {$X^{\compl{\A},j}$};

\node[color=blue!70!black] at (1.25,-4) {$X^{\compl{\A},\complsm{j}}$};
\node[color=blue!70!black] at (4,-1.5) {$X^{\complsm{i},\compl{\A}}$};

\end{tikzpicture}
		\end{subfigure}
		\hspace*{0.5cm}
		\begin{subfigure}[b]{0.45\textwidth}
			\centering
			\tikzsetfigurename{notation2}
			\begin{tikzpicture}[scale=0.6]

\draw[draw=none, fill=blue!20!white] (2,0) -- (6,0) -- (6,-2) -- (2,-2) -- cycle;

\draw[draw=none, fill=blue!20!white] (0,-2) -- (0,-6) -- (2,-6) -- (2,-2) -- cycle; 

\draw[draw=none,fill=black!20!white] (6,-2) -- (2,-2) -- (2,-6) -- (6,-6) -- cycle;

\draw (6,-2) -- (0,-2);
\draw (2,0) -- (2,-6);
\draw[very thick] (6,0) -- (0,0) -- (0,-6);

\draw[decorate, decoration = {brace}, thick] (0,0.2) -- (2,0.2) node[midway, yshift=0.3cm] {\scriptsize $n$};
\draw[decorate, decoration = {brace}, thick] (-0.2,-2) -- (-0.2,0) node[midway, xshift=-0.3cm] {\scriptsize $n$};

\node at (1,-1) {$X^{\A,\A}$};
\node at (4,-4) {$X^{\compl{\A},\compl{\A}}$};

\node at (4,-1) {$X^{\A, \compl{\A}}$};
\node at (1,-4) {$X^{\compl{\A},\A}$};

\node at (0.65,-6.8) {\phantom{$C_{j,\A}$}};

\end{tikzpicture}
		\end{subfigure}
		\caption{Graphical description of some subproducts of $X^{\N,\N}$ as per \cref{nota:ah}. 
		The first index in the exponent is the row index, the second index the column index, and $\A$ denotes $\{1, \dots, n\}$.}
		\label{fig:notation}
	\end{figure}

	With this in mind, we can now state the version of the Aldous--Hoover Theorem that we view as the strongest.
	\cref{thm:AldousHooverWeak} indeed follows from it by an application of the universal property of the Kolmogorov power $X^{\N,\N}$, but there is no obvious implication in the opposite direction.
	\begin{theorem}[Synthetic Aldous--Hoover Theorem, strong form]\label{thm:AldousHoover}
		Let $\cC$ be a Markov category satisfying \cref{ass:de_finetti}.
		Then for any row-column exchangeable state $p \colon I \to X^{\N, \N}$, we have the following decomposition
		\begin{equation}\label{eq:aldous_hoover1}
			\tikzfig{aldous_hoover1}
		\end{equation}
		given suitable morphisms $f \colon X^{\N, \N} \to X^{\N}$, $g \colon X^{\N, \N} \to X^{\N}$, and $h \colon X^{\N} \otimes X^{\N, \N} \otimes X^{\N} \to X$.
	\end{theorem}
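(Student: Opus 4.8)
The plan is to mimic the proof of the strong de Finetti theorem (\cref{thm:definetti}), carried out in both the row and the column directions and then glued together entrywise. By the universal property of the Kolmogorov power $X^{\N,\N}$ it suffices to verify \eqref{eq:aldous_hoover1} after marginalizing onto each finite block $X^{\A,\A}$ with $\A = \{1,\dots,n\}$, using throughout the factorization \eqref{eq:array_decomp}. The three morphisms to be constructed play the following roles: sampling the matrix from $p$ yields the object serving as the global latent and as the source of all summaries; $f$ extracts a row summary for each row, $g$ a column summary for each column, and $h$ is a single entry conditional that regenerates one matrix entry from its row summary, the global latent, and its column summary. As in the one-dimensional case, the whole point is that $h$ must use the sampled matrix only through its tail, so that regenerating the block $X^{\A,\A}$ never peeks at the block itself.

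First I would establish the two-dimensional analogue of \cref{prop:shift_perm_inv}. Viewing the rows $(X^{i,\N})_{i\in\N}$ as an exchangeable sequence valued in the Kolmogorov power $X^{\N}$ (row-column exchangeability of $p$ makes this sequence exchangeable), \cref{prop:shift_perm_inv} applies to show that any conditional invariant under finite row permutations is already invariant under the row shift, and symmetrically for columns. The Cauchy--Schwarz axiom (\cref{def:cauchy_schwarz}) is exactly what powers the permutation-to-shift upgrade here, just as it did in \cref{prop:shift_perm_inv}. Iterating the row and column versions yields a joint shift covariance: the row-tail conditional, the column-tail conditional, and the entry conditional of $p$ each depend on the matrix only through the appropriate tail region and, crucially, are the \emph{same} morphism for every index. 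This identification of one common entry conditional $h$ (and common summary maps $f$, $g$) independent of $(i,j)$ is what I expect to be the main obstacle, since the matrix carries two independent symmetry directions and the Cauchy--Schwarz argument must be threaded through both while carefully tracking which conditional is being shown covariant.

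Next I would extract the conditional independence relations. Applying \cref{thm:definetti} to the exchangeable sequence of rows gives
\begin{equation*}
	\bigperp_{i \in \A} X^{i,\N} \largemid X^{\compl{\A},\N},
\end{equation*}
i.e.\ the block rows are conditionally independent given the remaining rows, and symmetrically
\begin{equation*}
	\bigperp_{j \in \A} X^{\N,j} \largemid X^{\N,\compl{\A}}
\end{equation*}
for the columns. Splitting each $X^{i,\N} \cong X^{i,\A}\tensor X^{i,\compl{\A}}$ and moving the tail pieces $X^{\A,\compl{\A}}$ into the conditioning set by weak union (\cref{lem:semigraphoid}, together with \cref{def:mut_cond_ind}) turns the first relation into independence of the $X^{i,\A}$ conditioned on the entire block complement $X^{\compl{\A},\A},X^{\A,\compl{\A}},X^{\compl{\A},\compl{\A}}$, and likewise for the columns. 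These are two partitions of the block $\A\times\A$, one into rows and one into columns, with the \emph{same} conditioning object, so the Partition Lemma (\cref{lem:partition}) applies to their common refinement into singletons and yields the entrywise independence
\begin{equation*}
	\bigperp_{(i,j)\in\A\times\A} X^{i,j} \largemid X^{\compl{\A},\A},\,X^{\A,\compl{\A}},\,X^{\compl{\A},\compl{\A}}.
\end{equation*}
A third, cell-level relation then refines this: by shift covariance the conditional of $X^{i,j}$ factors through only the row-$i$ tail $X^{i,\compl{\A}}$, the column-$j$ tail $X^{\compl{\A},j}$, and the global tail $X^{\compl{\A},\compl{\A}}$.

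Finally I would read off the data of the decomposition. Define $g$ so that its $j$-th output is the column-$j$ summary supplied by the column-tail conditionals, $f$ so that its $i$-th output is the row-$i$ summary, and $h$ as the single entry conditional identified above; each is well defined and index-independent precisely because of shift covariance. Assembling these with the overlapping plate notation of \cref{not:plate_overlap}, and using \cref{rem:mut_cond_ind} together with the entrywise conditional independence just derived, the resulting state has the correct marginal on $X^{\A,\A}$ for every $n$. Since these block-wise identities are compatible, the universal property of $X^{\N,\N}$ promotes them to the full identity \eqref{eq:aldous_hoover1}.
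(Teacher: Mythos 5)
Your proposal reproduces the paper's skeleton for two of the three layers of the decomposition: your derivation of entrywise independence (de Finetti on rows and on columns, weak union to enlarge the conditioning set, then the Partition Lemma) is exactly the paper's \cref{lem:independence1}, and your ``cell-level relation'' is the paper's \cref{lem:independence2}. But there is a genuine gap at the middle layer. For \cref{eq:aldous_hoover1} to hold it is not enough that the entries $X^{i,j}$ are mutually independent given all the tails and that each entry conditional factors through $\bigl(X^{i,\compl{\A}}, X^{\compl{\A},j}, X^{\compl{\A},\compl{\A}}\bigr)$: you also need the joint law of the conditioning variables themselves to factor, i.e.\ the row tails $X^{i,\compl{\A}}$ ($i \in \A$) and column tails $X^{\compl{\A},j}$ ($j \in \A$) must be \emph{mutually independent given the global tail} $X^{\compl{\A},\compl{\A}}$, because in \cref{eq:aldous_hoover1} the morphisms $f$ and $g$ sit in separate plates fed only by copies of the global tail. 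This is the paper's \cref{lem:independence3}, and none of the relations you list implies it: your two de Finetti relations condition on $X^{\compl{\A},\N}$ and $X^{\N,\compl{\A}}$ respectively (each of which already contains the other family of tails), and your entrywise independence conditions on all tails simultaneously, so it says nothing about how the row tails relate to the column tails given only $X^{\compl{\A},\compl{\A}}$.

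The tools you invoke cannot close this gap. \cref{prop:shift_perm_inv} is an \emph{invariance} statement ($f \ase{p} f \comp X^s$ for a conditional that is permutation invariant), whereas the cross-independence $X^{\A,\compl{\A}} \perp X^{\compl{\A},\A} \mid X^{\compl{\A},\compl{\A}}$ is obtained in the paper from a \emph{covariance} statement: one views the array as a row-indexed exchangeable sequence of pairs $\bigl(X^{k,\A}, X^{k,\compl{\A}}\bigr)$ and takes the conditional of the first-$n$-columns block given the columns-tail block. That conditional is not permutation invariant{\,---\,}permuting the rows of its input permutes the rows of its output{\,---\,}so \cref{prop:shift_perm_inv} simply does not apply to it; one needs \cref{prop:shift_covariance} (permutation covariance implies shift covariance, $Y^s \comp f \ase{p} f \comp X^s$), which requires its own Cauchy--Schwarz argument with an extra step (\cref{eq:shift_covariance_proof3}), and then \cref{cor:shift_covariance} to convert shift covariance into the conditional independence $Y^{\compl{\A}} \perp X^{\A} \mid X^{\compl{\A}}$. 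Your phrase that the conditionals ``depend on the matrix only through the appropriate tail region'' by ``iterating the row and column versions'' of \cref{prop:shift_perm_inv} is therefore asserting, without a working mechanism, exactly the content of \cref{lem:independence3}. (A smaller inaccuracy: the index-independence of $h$ also does not come from shift covariance; the paper proves it by conjugating with transpositions $\sigma_r = (i\ k)$, $\sigma_c = (j\ \ell)$ and invoking row-column exchangeability.)
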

	
	Here, we leave it understood that comparing the two sides of the equation involves an application of the isomorphism of \cref{eq:array_decomp}.
%	Note that type-matching is ensured by implicit isomorphisms induced by the bijection $\N \to \compl{\A}$ given by $k \mapsto k + n$.
	Note also that by considering the marginal of \cref{eq:aldous_hoover1} on $X^{\comp{\A},i} \otimes X^{\comp{\A},\comp{\A}}$ we can identify $f$ (and similarly also $g$) as a particular tail conditional, just as in the de Finetti Theorem.

%	\anto{What is the statement that we want? Is this one with the explicit marginalization, or is it the one we achieve at the end of the proof of the statement, i.e. without the marginalization?}
%\togo{We need to introduce the overlaping plate semantics, perhaps in section 3.
%Andreas an me have been thinking that this should be possible by introducing parallel and nested plates and then writing overlaping plate in terms of these.
%The resulting notation may also be relevant for stating the charaterization of the jointly exchangeable arrays.}
To better understand the use of plate notation, we write out explicitly the case of $n=2$ in \cref{fig:AH_2x2}. 
		\begin{figure}[th]
		\begin{equation*}
			\tikzfig{aldous_hoover2x2}
		\end{equation*}
		\caption{The Aldous--Hoover statement for $n=2$. The only purpose of colors is to aid the reader's eye. This highlights that the output $X^{i,j}$ only depend on the tail of its row $X^{\compl{\A},j}$, the tail of its column $X^{i,\compl{\A}}$, and the tail of the whole infinite matrix $X^{\compl{\A},\compl{\A}}$.}\label{fig:AH_2x2}
		\end{figure}

	Let us now prove \cref{thm:AldousHoover} using several auxiliary results as shown in \cref{fig:strategy_ah}.
	We work under \cref{ass:de_finetti} throughout, although again the existence of conditionals is not necessary for some of our proofs such as the one of \cref{prop:shift_covariance}.

%	\begin{figure}[ht]
%		\centering
%	\begin{tikzpicture}[sibling
%		distance=5cm,level distance=3.2cm]
%		\node[ellipse,draw,text width=0.35\textwidth,align=center] {Aldous--Hoover Theorem (\cref{thm:AldousHoover})} [grow'=up]
%		child {node[ellipse,draw,text width=0.2\textwidth,align=center] (SI) {Single entry independence I\\ (\cref{lem:independence1})}edge from parent[implies-,double equal sign distance,shorten <=3pt]}
%		child {node[ellipse,draw,text width=0.2\textwidth,align=center] {Single entry independence II\\ (\cref{lem:independence2})}edge from parent[implies-,double equal sign distance,shorten <=3pt] [grow'=110]
%		child {node[ellipse,draw,text width=0.2\textwidth,align=center] (dF) {de Finetti Theorem (\cref{thm:definetti})}edge from parent[implies-,double equal sign distance,shorten <=3pt]
%		}}
%		child {node[ellipse,draw,text width=0.2\textwidth,align=center] (RC) {Row and column independence (\cref{lem:independence3})}edge from parent[implies-,double equal sign distance,shorten <=3pt] [grow'=110]
%		child {node[ellipse,draw,text width=0.2\textwidth,align=center] {Shift covariance (\cref{cor:shift_covariance})}edge from parent[implies-,double equal sign distance,shorten <=3pt]
%		}};
%		\draw[implies-,double equal sign distance,shorten <=3pt] (RC)--(dF);
%		\draw[implies-,double equal sign distance,shorten <=3pt] (SI)--(dF);
%	\end{tikzpicture}
%	\caption{Overall strategy of the proof of \cref{thm:AldousHoover}.}\label{fig:strategy_ah}
%	\end{figure}
	\tikzexternaldisable
	\newlength{\chunit}
	\setlength{\chunit}{\dimexpr\numexpr 1*\textwidth/12 sp\relax}
	\begin{figure}
		\tikzset{%
			basic/.style = {rectangle, rounded corners, draw=black, minimum width=3\chunit, minimum height=1\chunit, text centered, text width=2.8\chunit, inner sep=2pt},
			1D/.style = {basic,fill=Dandelion!15!white}, %RedOrange!11!white},
			2D/.style = {basic,fill=Yellow!13!white}, %SpringGreen!18!white},
			AH/.style = {basic,fill=LimeGreen!18!white}, %Yellow!13!white},
			implies/.style = {-{Implies},double,double equal sign distance},
		}
		\makebox[\textwidth][c]{
			\begin{tikzpicture}[node distance=0.9\chunit,shorten > = 2pt, shorten < = 3pt]
				\node (dFT) 		[1D] 							{de Finetti Theorem\\ (\sref{Theorem}{thm:definetti})};
				\node (sc)		[1D, right=of dFT]				{Shift covariance (\sref{Corollary}{cor:shift_covariance})};
				
				\node (seiII)		[2D, below=of dFT]		{Single entry independence II (\sref{Lemma}{lem:independence2})};
				\node (seiI)		[2D, left=of seiII]		{Single entry independence I (\sref{Lemma}{lem:independence1})};
				\node (rci)		[2D, right=of seiII]		{Row and column independence (\sref{Lemma}{lem:independence3})};
				
				\node (AHT)		[AH, below=of seiII]			{Aldous--Hoover Theorem (\sref{Theorem}{thm:AldousHoover})};
				
				\draw[implies] 		(dFT.south west) -- (seiI.north east); 
				\draw[implies] 		(dFT.south) -- (seiII.north); 
				\draw[implies] 		(dFT.south east) -- (rci.north west); 
				\draw[implies] 		(sc.south) -- (rci.north); 
				\draw[implies] 		(seiI.south east) -- (AHT.north west); 
				\draw[implies] 		(seiII.south) -- (AHT.north); 
				\draw[implies] 		(rci.south west) -- (AHT.north east); 
			\end{tikzpicture}
		}
		\caption{Overall strategy of the proof of \cref{thm:AldousHoover}.
		The specific conditional independence relations established by shift covariance and the three lemmas in the second row are depicted in \cref{fig:shift_covariance,fig:IndependenceLemmas} respectively.
		Orange statements establish conditional independence relations for 1D sequences of random variables while the yellow and green ones are for 2D arrays.
		The arrows are used to indicate the dependence structure in our proof.}
		\label{fig:strategy_ah}
	\end{figure}
	\tikzexternalenable
	
	\subsection{Shift Covariance}
	\label{sec:shift_covariance}

	To prove our Synthetic Aldous--Hoover Theorem using the steps illustrated in \cref{fig:strategy_ah}, we need to establish three independence lemmas that concern arrays of random variables. %; these are actually about \emph{conditional} independence, but we drop the qualifier for brevity.
	We start by working towards the shift covariance property, which is a key ingredient in the proof of the row and column independence lemma.

%	An important ingredient for the ``row and column independence lemma'' is a conditional independence relation just between two rows of the array, one of which is shifted, as depicted in \cref{fig:shift_covariance}.
%	This relation follows from a property we may call shift covariance, as we show in \cref{cor:shift_covariance} below.
%	However, shift covariance does not immediately follow from exchangeability.
%	We first need to use the Cauchy--Schwarz axiom to prove it.
%	 it is important to give a technical result that ensures a conditional independence between rows and columns (see the forthcoming \cref{lem:independence3}).
%	Here we want to study the general underlying conditional independence that arises in this situation.
%	The key ingredient for our argument is the Cauchy--Schwarz axiom.
%	For the sake of clarity, we will use the following notion, which corresponds to exchangeability for a morphism $I \to (X\otimes Y)^{\N}$, up to a ``swapping isomorphism'' $(X\otimes Y)^{\N} \cong X^{\N}\otimes Y^{\N}$.
%	\begin{definition}
%		We say that a morphism $p: I \to X^{\N} \otimes Y^{\N}$ is \newterm{doubly exchangeable} if
%		\begin{equation}\label{eq:doubly_exchangeable}
%			\tikzfig{doubly_exchangeable}
%		\end{equation}
%		for every finite permutation $\sigma: \N \to \N$.
%	\end{definition}
%	\tob{Not sure this warrants a new term since it's just ordinary exchangeability for states of type $I \to (X \otimes Y)^\N$.}
%	\anto{Given the prior discussion, is this better now?}

	This shift covariance is reminiscent of \cref{prop:shift_perm_inv}, one of whose directions establishes that a morphism displaying permutation invariance of its input sequence is necessarily also shift invariant.
	Here, we consider a morphism, whose input and output are both Kolmogorov powers, and we show that its \emph{permutation covariance} implies its \emph{shift covariance}.

	\begin{proposition}[Permutation covariance implies shift covariance]\label{prop:shift_covariance}
		Let $p \colon A \to X^{\N}$ be an exchangeable morphism. 
		If $f \colon X^{\N} \to Y^{\N}$ is a morphism satisfying
		\begin{equation}\label{eq:perm_covariance}
			\tikzfig{perm_covariance}
		\end{equation}
		for every finite permutation $\sigma$, then we also have
		\begin{equation}\label{eq:cond_inv_dex}
			\tikzfig{cond_inv_dex}
		\end{equation}
		where $s \colon \N \to \N$ is the successor function.
	\end{proposition}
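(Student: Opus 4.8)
The plan is to mirror the proof of \cref{prop:shift_perm_inv}: I would reduce the passage from permutation covariance to shift covariance to a single application of the Cauchy--Schwarz axiom, with \cite[Lemma 5.1]{fritz2021definetti} (exchangeability implies shift invariance, valid in any Markov category with countable Kolmogorov products) doing the combinatorial work. Reading the permutation covariance hypothesis as the $\as{p}$ intertwining relation $Y^{\sigma}\comp f \ase{p} f\comp X^{\sigma}$ and the desired conclusion as $Y^{s}\comp f \ase{p} f\comp X^{s}$, I would first record that $p$ is itself shift invariant, i.e.\ $X^{s}\comp p = p$, which follows by applying \cite[Lemma 5.1]{fritz2021definetti} to the exchangeable morphism $p$.

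The heart of the argument is to build auxiliary morphisms that retain an undistorted witness copy of the output of $p$ alongside one or two copies of the $f$-output, so that the $\as{p}$ hypothesis can be converted into honest equalities of morphisms. Concretely, I would consider
\[
	u \coloneqq (f\otimes\id_{X^{\N}})\comp\cop_{X^{\N}}\comp p, \qquad v\coloneqq (f\otimes f\otimes\id_{X^{\N}})\comp\cop^{(3)}_{X^{\N}}\comp p,
\]
where $\cop^{(3)}$ is the threefold copy, viewed as morphisms from $A$ into the Kolmogorov powers $(Y\otimes X)^{\N}$ and $(Y\otimes Y\otimes X)^{\N}$ respectively. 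First I would show that $u$ and $v$ are \emph{exactly} exchangeable. For each permutation $\sigma$ one replaces $Y^{\sigma}\comp f$ by $f\comp X^{\sigma}$ using the hypothesis{\,---\,}legitimate because an $\as{p}$ equality remains valid after post-composing its $X^{\N}$-witness leg with the maps producing the other outputs{\,---\,}and then absorbs the resulting copies of $X^{\sigma}$ using naturality of copy for the deterministic $X^{\sigma}$ together with exchangeability $X^{\sigma}\comp p = p$. Applying \cite[Lemma 5.1]{fritz2021definetti} then promotes this to shift invariance $(Y^{s}\otimes X^{s})\comp u = u$ and $(Y^{s}\otimes Y^{s}\otimes X^{s})\comp v = v$.

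From these two identities I would extract the three ``inner product'' equalities required by the Cauchy--Schwarz axiom for $g_{1}\coloneqq Y^{s}\comp f$ and $g_{2}\coloneqq f\comp X^{s}$, all three of which I claim equal $M\coloneqq (f\otimes f)\comp\cop_{X^{\N}}\comp p$. The equality $\langle g_{2},g_{2}\rangle = M$ is direct, using that $X^{s}$ is deterministic and $X^{s}\comp p = p$; the mixed term $\langle g_{1},g_{2}\rangle = M$ comes from the shift invariance of $u$ by post-composing its witness leg with $f$; and the remaining term $\langle g_{1},g_{1}\rangle = M$ comes from the shift invariance of $v$ by discarding its witness leg. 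The Cauchy--Schwarz axiom (\cref{def:cauchy_schwarz}) then yields $g_{1}\ase{p}g_{2}$, which is exactly shift covariance.

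The step I expect to be the main obstacle is the symmetric term $\langle g_{1},g_{1}\rangle = \langle Y^{s}\comp f,\, Y^{s}\comp f\rangle$: here both tensor legs factor through $f$, so no witness of $p$ survives and the covariance hypothesis cannot be substituted directly{\,---\,}indeed a naive attempt is circular, as it would already presuppose shift covariance. This is precisely why the richer morphism $v$, carrying two copies of the $f$-output next to a single clean $X^{\N}$-witness, is needed: its exchangeability, hence shift invariance via \cite[Lemma 5.1]{fritz2021definetti}, encodes the required equality while keeping the witness that makes the permutation-to-shift passage rigorous. As in \cref{prop:shift_perm_inv}, the Cauchy--Schwarz axiom is what finally upgrades the resulting marginal identities to the genuine $\as{p}$ equality.
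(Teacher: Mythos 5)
Your proof is correct and follows essentially the same route as the paper's: both establish exchangeability of the joint morphisms carrying one and two copies of the output of $f$, promote this to shift invariance via \cite[Lemma 5.1]{fritz2021definetti}, and conclude by applying the Cauchy--Schwarz axiom to $Y^s \comp f$ and $f \comp X^s$. The only cosmetic difference is that you keep an explicit $X^{\N}$-witness leg on the two-copy morphism $v$ and prove its exchangeability directly, whereas the paper obtains the corresponding $(Y \otimes Y)^{\N}$-valued morphism by post-composing the first exchangeability equation with $\id_{Y^{\N}} \otimes f$ and applying permutation covariance once more.
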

	\begin{proof}
		The proof is very similar to that of \cref{prop:shift_perm_inv}.
		
		Using permutation covariance of $f$ together with exchangeability of $p$ and the fact that $X^\sigma$ is deterministic, we find
		\begin{equation}\label{eq:shift_covariance_proof1}
			\tikzfig{shift_covariance_proof1}
		\end{equation}
		so that the morphism on the right is exchangeable when viewed as a morphism of type $A \to (Y \otimes X)^\N$. 	
		Therefore, by \cite[Lemma 5.1]{fritz2021definetti}, it is also invariant under applying the shift $(Y \otimes X)^s$.
		Using this fact together with the shift invariance of $p$ and the fact that $X^s$ is deterministic gives us
		\begin{equation}\label{eq:shift_covariance_proof2}
			\tikzfig{shift_covariance_proof2}
		\end{equation}
		Similarly, let us argue that 
		\begin{equation}\label{eq:shift_covariance_proof3}
			\tikzfig{shift_covariance_proof3}
		\end{equation}
		holds.
		Namely, post-composing \cref{eq:shift_covariance_proof1} with $\id_{Y^\N} \otimes f$ and applying permutation covariance once again shows that the morphism on the left-hand side of \cref{eq:shift_covariance_proof3} is exchangeable as a morphism of type $A \to (Y \otimes Y)^\N$.
		Applying \cite[Lemma 5.1]{fritz2021definetti} again then gives the desired shift invariance.
		
		Combining \cref{eq:shift_covariance_proof2,eq:shift_covariance_proof3} yields
		\begin{equation}\label{eq:shift_covariance_proof4}
			\tikzfig{shift_covariance_proof4}
		\end{equation}
		so that the claimed shift covariance of $f$ follows by the Cauchy--Schwarz axiom.
	\end{proof}

	\begin{figure}[t]
		\centering
		\begin{tikzpicture}[scale=0.6]

%\draw[gray!30!white] (2,0) -- (0,0) -- (0,-2);
%
%\draw[draw=none, fill=blue!20!white] (2,0) -- (6,0) -- (6,-0.75) -- (2,-0.75) -- cycle;
%\draw[draw=none, fill=blue!20!white] (2,-2) -- (6,-2) -- (6,-1) -- (2,-1) -- cycle;
%
%\draw[draw=none, fill=blue!20!white] (0,-2) -- (0,-6) -- (0.5,-6) -- (0.5,-2) -- cycle; 
%\draw[draw=none, fill=blue!20!white] (2,-2) -- (2,-6) -- (0.75,-6) -- (0.75,-2) -- cycle; 
%
%\draw[draw=none, fill=black!20!white] (6,-2) -- (2,-2) -- (2,-6) -- (6,-6) -- cycle;
%\draw (6,-2) -- (2,-2) -- (2,-6);
%
%\draw[draw=none,fill=black!20!white] (0.5,-2) -- (0.5,-6) -- (0.75,-6) -- (0.75,-2) -- cycle;
%\draw (0.5,-2) -- (0.5,-6);
%\draw (0.75,-2) -- (0.75,-6);
%
%\draw[draw=none,fill=black!20!white] (2,-0.75) -- (6,-0.75) -- (6,-1)--  (2,-1) -- cycle;
%\draw (2,-0.75) -- (6,-0.75);
%\draw (2,-1) -- (6,-1);
%
%\draw[very thick] (6,0) -- (2,0) -- (2,-2) -- (0,-2) -- (0,-6);
%\draw[very thick, fill=red!20!white] (0.5, -1) rectangle (0.75,-0.75);

\draw[decorate, decoration = {brace}, thick] (-1,0.2) -- (2,0.2) node[midway, yshift=0.3cm] {\scriptsize $n$};
%\draw[decorate, decoration = {brace}, thick] (-0.2,-2) -- (-0.2,0) node[midway, xshift=-0.3cm] {\scriptsize $n$};

\draw[draw=none, fill=black!20!white] (2,0) -- (8,0) -- (8,-1) -- (2,-1);
\draw[draw=none, fill=blue!20!white] (2,-1) -- (8,-1) -- (8,-2) -- (2,-2);
\draw[draw=none, fill=red!20!white] (-1,0) -- (2,0) -- (2,-1) -- (-1,-1);

\draw[very thick] (-1,0) -- (8,0);
\draw[very thick] (8,-2) -- (2,-2) -- (2,-1) -- (-1,-1) -- (-1,0);
\draw (2,0) -- (2,-1) -- (8,-1);
\draw[gray!30!white] (-1,-1) -- (-1,-2) -- (2,-2);

\node at (0.5,-0.5) {$X^{\A}$};
\node[gray] at (0.5,-1.5) {$Y^{\A}$};
\node at (5,-0.5) {$X^{\compl{\A}}$};
\node at (5,-1.5) {$Y^{\compl{\A}}$};

\end{tikzpicture}
		\caption{Visualization of \cref{cor:shift_covariance}, which expresses the conditional independence relations resulting from shift covariance.
			The upper and lower row represent the sequences $X^{\N}$ and $Y^{\N}$ respectively where each $X$ and each $Y$ is an output of a joint state $q$ on the full array $(X \otimes Y)^\N$.
			The index set $\A$ is the subset of the first $n$ natural numbers (\cref{nota:ah}).
			When conditioned on the gray region ($X^{\compl{\A}}$), the outputs in the red ($X^{\A}$) and blue ($Y^{\compl{\A}}$) regions are independent of each other.}
		\label{fig:shift_covariance}
		%\anto{I changed the color of $X^{\A}$ in \cref{fig:shift_covariance} to make it consistent with \cref{fig:IndependenceLemmas}. But if it's not of your satisfaction, change it back (and change \cref{fig:IndependenceLemmas} accordingly?).}
	\end{figure}
	\begin{corollary}[Shift covariance (see \cref{fig:shift_covariance})]\label{cor:shift_covariance}
		%Consider a Markov category satisfying \cref{ass:de_finetti}.
		Every exchangeable state $q \colon I \to (X \tensor Y)^{\N}$ displays the conditional independence
		\begin{equation}
			Y^{\compl{\A}} \perp X^{ \A} \mid X^{\compl{\A}}.
		\end{equation}
	\end{corollary}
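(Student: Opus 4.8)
The plan is to read this corollary as a repackaging of \cref{prop:shift_covariance} into the language of \cref{def:cond_ind}. Using the canonical isomorphism $(X \tensor Y)^{\N} \cong X^{\N} \tensor Y^{\N}$, I regard $q$ as a state on $X^{\N} \tensor Y^{\N}$, and its exchangeability as invariance under $X^{\sigma} \tensor Y^{\sigma}$ for every finite permutation $\sigma$. Let $p \colon I \to X^{\N}$ be the $X^{\N}$-marginal of $q$; discarding $Y^{\N}$ and using $\discard \comp Y^{\sigma} = \discard$ shows that $p$ is again exchangeable. Since $\cC$ has conditionals, I pick a conditional $f \colon X^{\N} \to Y^{\N}$ of $q$ given $X^{\N}$, so that $q = (\id_{X^{\N}} \tensor f) \comp \cop_{X^{\N}} \comp p$. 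The goal is then to show that the $Y^{\compl{\A}}$-part of $f$ depends, $\as{p}$, only on $X^{\compl{\A}}$, and to assemble this with a conditional for $X^{\A}$ into the factorization required by \cref{def:cond_ind}.

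First I would verify that $f$ satisfies the hypothesis of \cref{prop:shift_covariance}, namely permutation covariance. Applying $X^{\sigma} \tensor Y^{\sigma}$ to the factorization of $q$, and using that $X^{\sigma}$ is deterministic with $\cop \comp X^{\sigma} = (X^{\sigma} \tensor X^{\sigma}) \comp \cop$ together with $X^{\sigma} \comp p = p$, one finds that $Y^{\sigma} \comp f \comp X^{\sigma^{-1}}$ is also a conditional of $q$ given $X^{\N}$. By $\as{p}$ uniqueness of conditionals \cite[Remark~12.2]{fritz2019synthetic} this gives $f \comp X^{\sigma} \ase{p} Y^{\sigma} \comp f$. \cref{prop:shift_covariance} then upgrades this to the shift covariance $f \comp X^{s} \ase{p} Y^{s} \comp f$. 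Iterating $n$ times, and using shift invariance $X^{s} \comp p = p$ to precompose $\as{p}$ equalities while keeping the reference state $p$ fixed, I obtain $f \comp X^{s^{n}} \ase{p} Y^{s^{n}} \comp f$.

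The key reinterpretation is that $X^{s^{n}}$ discards the head $X^{\A}$ and reindexes the tail $X^{\compl{\A}}$, so $f \comp X^{s^{n}}$ factors through the projection $\pi_{\compl{\A}} \colon X^{\N} \to X^{\compl{\A}}$, while $Y^{s^{n}} \comp f$ is exactly the $Y^{\compl{\A}}$-marginal of $f$ (reindexed). Hence there is a morphism $\tilde{f} \colon X^{\compl{\A}} \to Y^{\compl{\A}}$ with $(\discard_{Y^{\A}} \tensor \id) \comp f \ase{p} \tilde{f} \comp \pi_{\compl{\A}}$. Substituting this into the factorization of $q$ and discarding $Y^{\A}$ shows that, in the marginal of $q$ on $X^{\A} \tensor X^{\compl{\A}} \tensor Y^{\compl{\A}}$, the output $Y^{\compl{\A}}$ is produced from $X^{\compl{\A}}$ alone via $\tilde{f}$. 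Finally, choosing a conditional $a \colon X^{\compl{\A}} \to X^{\A}$ of $p$ and copying $X^{\compl{\A}}$ so that one copy feeds $a$ and another feeds $\tilde{f}$ puts this marginal in precisely the form of \cref{def:cond_ind} with conditioning object $X^{\compl{\A}}$, witnessing $Y^{\compl{\A}} \perp X^{\A} \mid X^{\compl{\A}}$. I expect the main obstacle to be the bookkeeping around the shift maps: correctly iterating the $\as{p}$ shift covariance (which relies on shift invariance of $p$) and translating $f \comp X^{s^{n}} \ase{p} Y^{s^{n}} \comp f$ into the statement that the tail of $Y$ is generated from the tail of $X$.
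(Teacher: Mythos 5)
Your proposal is correct and follows essentially the same route as the paper: take $p$ to be the $X^{\N}$-marginal and $f$ a conditional of $q$ given $X^{\N}$, show $f$ is permutation covariant, upgrade to shift covariance via \cref{prop:shift_covariance}, iterate $n$ times so that $(X^s)^n$ discards the head, and read off the conditional independence. The only cosmetic differences are that you obtain permutation covariance from $\as{}$-uniqueness of conditionals (the paper does a direct diagram manipulation, post-composing the exchangeability identity with $\id \otimes X^{\sigma^{-1}}$) and that you assemble the witnessing factorization of \cref{def:cond_ind} by hand, where the paper instead cites \cite[Proposition~6.9]{chojacobs2019strings}.
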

%	See \cref{fig:shift_covariance} for an illustration.
	%This statement can be generalized to any finite set contained in $\N$, since the size of $\A$ is arbitrary, but that is beyond the scope of this paper.
	\begin{proof}
		Let $p \colon I \to X^{\N}$ be the marginal of $q$ on the first row and let $f \colon X^{\N} \to Y^{\N}$ be a conditional of $q$ given $X^{\N}$.
		Exchangeability of $q$ implies that $p$ is exchangeable and that $f$ is permutation covariant.
		The first is immediate, and the second can be proven by noting that \cref{eq:shift_covariance_proof1} holds again and post-composing it with $\id \otimes X^{\sigma^{-1}}$.
		By \cref{prop:shift_covariance}, $f$ is thus also shift covariant.
		
		Then the marginal of $q$ obtained by discarding $Y^\A$ is
		\begin{equation}\label{eq:shift_covariance_condind_proof}
			\tikzfig{shift_covariance_condind_proof}
		\end{equation}
		where in the first equality we apply the shift covariance $n$ times and the second equality follows from the fact that $(X^s)^n$ discards the first $n$ wires.
		\cref{eq:shift_covariance_condind_proof} shows the desired conditional independence by \cite[Proposition 6.9]{chojacobs2019strings}. %  conclude by \cite[Definition 12.1]{fritz2019synthetic}. \cite[Proposition~6.9]{chojacobs2019strings}.
	\end{proof}

	\subsection{Three Independence Lemmas}\label{sec:3_cond_ind}

		In this section, we prove the three conditional independence relations for row-column exchangeable states forming the middle row of \cref{fig:strategy_ah}.
		The structure of these relations is illustrated in \cref{fig:IndependenceLemmas}.
		The proofs are applications of the semigraphoid properties and the Partition Lemma (\cref{lem:partition}), which both prove new conditional independences from given ones, together with the Synthetic de Finetti Theorem and the shift covariance (\cref{cor:shift_covariance}) as starting points.
		Although this reasoning about conditional independences can be translated completely into string diagrams, which the reader may prefer, our choice allows for a cleaner and more compact presentation.
		%First, because we do not need to consider the morphisms occurring in the string diagram, and this makes both the statements and the proofs more compact.
		%Second, because in this way the arguments consist mostly of sequences of implications.

		In the following, ``Dec'' stands for the decomposition and ``WU'' for the weak union semigraphoid property (\cref{lem:semigraphoid}).
		Throughout, we still work under \cref{ass:de_finetti}, use \cref{nota:ah}, and assume that $p \colon I \to X^{\N,\N}$ is a row-column exchangeable state.
		
		\begin{figure}[t]
			\centering
			\begin{subfigure}[b]{0.42\textwidth}
				\centering
				\tikzsetfigurename{ah_fig_indep_lemma1}
				\begin{tikzpicture}[scale=0.6]

\begin{scope}
\draw[draw=none, fill=red!20!white] (0,-0.25) rectangle (0.25,0);
\draw[draw=none, fill=blue!20!white] (0.25,-0.25) rectangle (0.5,0);
\draw[draw=none, fill=red!20!white] (0.5,-0.25) rectangle (0.75,0);
\draw[draw=none, fill=blue!20!white] (0.75,-0.25) rectangle (1,0);
\draw[draw=none, fill=red!20!white] (1,-0.25) rectangle (1.25,0);
\draw[draw=none, fill=blue!20!white] (1.25,-0.25) rectangle (1.5,0);
\draw[draw=none, fill=red!20!white] (1.5,-0.25) rectangle (1.75,0);
\draw[draw=none, fill=blue!20!white] (1.75,-0.25) rectangle (2,0);
\end{scope}

\begin{scope}[yshift=-0.25cm]
\draw[draw=none, fill=blue!20!white] (0,-0.25) rectangle (0.25,0);
\draw[draw=none, fill=red!20!white] (0.25,-0.25) rectangle (0.5,0);
\draw[draw=none, fill=blue!20!white] (0.5,-0.25) rectangle (0.75,0);
\draw[draw=none, fill=red!20!white] (0.75,-0.25) rectangle (1,0);
\draw[draw=none, fill=blue!20!white] (1,-0.25) rectangle (1.25,0);
\draw[draw=none, fill=red!20!white] (1.25,-0.25) rectangle (1.5,0);
\draw[draw=none, fill=blue!20!white] (1.5,-0.25) rectangle (1.75,0);
\draw[draw=none, fill=red!20!white] (1.75,-0.25) rectangle (2,0);
\end{scope}

\begin{scope}[yshift=-0.5cm]
\draw[draw=none, fill=red!20!white] (0,-0.25) rectangle (0.25,0);
\draw[draw=none, fill=blue!20!white] (0.25,-0.25) rectangle (0.5,0);
\draw[draw=none, fill=red!20!white] (0.5,-0.25) rectangle (0.75,0);
\draw[draw=none, fill=blue!20!white] (0.75,-0.25) rectangle (1,0);
\draw[draw=none, fill=red!20!white] (1,-0.25) rectangle (1.25,0);
\draw[draw=none, fill=blue!20!white] (1.25,-0.25) rectangle (1.5,0);
\draw[draw=none, fill=red!20!white] (1.5,-0.25) rectangle (1.75,0);
\draw[draw=none, fill=blue!20!white] (1.75,-0.25) rectangle (2,0);
\end{scope}

\begin{scope}[yshift=-0.75cm]
\draw[draw=none, fill=blue!20!white] (0,-0.25) rectangle (0.25,0);
\draw[draw=none, fill=red!20!white] (0.25,-0.25) rectangle (0.5,0);
\draw[draw=none, fill=blue!20!white] (0.5,-0.25) rectangle (0.75,0);
\draw[draw=none, fill=red!20!white] (0.75,-0.25) rectangle (1,0);
\draw[draw=none, fill=blue!20!white] (1,-0.25) rectangle (1.25,0);
\draw[draw=none, fill=red!20!white] (1.25,-0.25) rectangle (1.5,0);
\draw[draw=none, fill=blue!20!white] (1.5,-0.25) rectangle (1.75,0);
\draw[draw=none, fill=red!20!white] (1.75,-0.25) rectangle (2,0);
\end{scope}

\begin{scope}[yshift=-1cm]
\draw[draw=none, fill=red!20!white] (0,-0.25) rectangle (0.25,0);
\draw[draw=none, fill=blue!20!white] (0.25,-0.25) rectangle (0.5,0);
\draw[draw=none, fill=red!20!white] (0.5,-0.25) rectangle (0.75,0);
\draw[draw=none, fill=blue!20!white] (0.75,-0.25) rectangle (1,0);
\draw[draw=none, fill=red!20!white] (1,-0.25) rectangle (1.25,0);
\draw[draw=none, fill=blue!20!white] (1.25,-0.25) rectangle (1.5,0);
\draw[draw=none, fill=red!20!white] (1.5,-0.25) rectangle (1.75,0);
\draw[draw=none, fill=blue!20!white] (1.75,-0.25) rectangle (2,0);
\end{scope}

\begin{scope}[yshift=-1.25cm]
\draw[draw=none, fill=blue!20!white] (0,-0.25) rectangle (0.25,0);
\draw[draw=none, fill=red!20!white] (0.25,-0.25) rectangle (0.5,0);
\draw[draw=none, fill=blue!20!white] (0.5,-0.25) rectangle (0.75,0);
\draw[draw=none, fill=red!20!white] (0.75,-0.25) rectangle (1,0);
\draw[draw=none, fill=blue!20!white] (1,-0.25) rectangle (1.25,0);
\draw[draw=none, fill=red!20!white] (1.25,-0.25) rectangle (1.5,0);
\draw[draw=none, fill=blue!20!white] (1.5,-0.25) rectangle (1.75,0);
\draw[draw=none, fill=red!20!white] (1.75,-0.25) rectangle (2,0);
\end{scope}

\begin{scope}[yshift=-1.5cm]
\draw[draw=none, fill=red!20!white] (0,-0.25) rectangle (0.25,0);
\draw[draw=none, fill=blue!20!white] (0.25,-0.25) rectangle (0.5,0);
\draw[draw=none, fill=red!20!white] (0.5,-0.25) rectangle (0.75,0);
\draw[draw=none, fill=blue!20!white] (0.75,-0.25) rectangle (1,0);
\draw[draw=none, fill=red!20!white] (1,-0.25) rectangle (1.25,0);
\draw[draw=none, fill=blue!20!white] (1.25,-0.25) rectangle (1.5,0);
\draw[draw=none, fill=red!20!white] (1.5,-0.25) rectangle (1.75,0);
\draw[draw=none, fill=blue!20!white] (1.75,-0.25) rectangle (2,0);
\end{scope}

\begin{scope}[yshift=-1.75cm]
\draw[draw=none, fill=blue!20!white] (0,-0.25) rectangle (0.25,0);
\draw[draw=none, fill=red!20!white] (0.25,-0.25) rectangle (0.5,0);
\draw[draw=none, fill=blue!20!white] (0.5,-0.25) rectangle (0.75,0);
\draw[draw=none, fill=red!20!white] (0.75,-0.25) rectangle (1,0);
\draw[draw=none, fill=blue!20!white] (1,-0.25) rectangle (1.25,0);
\draw[draw=none, fill=red!20!white] (1.25,-0.25) rectangle (1.5,0);
\draw[draw=none, fill=blue!20!white] (1.5,-0.25) rectangle (1.75,0);
\draw[draw=none, fill=red!20!white] (1.75,-0.25) rectangle (2,0);
\end{scope}

\draw[draw=none, fill=black!20!white] (6,0) -- (2,0) -- (2,-2) -- (0,-2) -- (0,-6) -- (6,-6) -- cycle;
\draw (2,0) -- (2,-2) -- (0,-2);

\draw[very thick] (6,0) -- (0,0) -- (0,-6);

\draw (0,-0.25) -- (2,-0.25);
\draw (0,-0.5) -- (2,-0.5);
\draw (0,-0.75) -- (2,-0.75);
\draw (0,-1) -- (2,-1);
\draw (0,-1.25) -- (2,-1.25);
\draw (0,-1.5) -- (2,-1.5);
\draw (0,-1.75) -- (2,-1.75);

\draw (0.25,0) -- (0.25,-2);
\draw (0.5,0) -- (0.5,-2);
\draw (0.75,0) -- (0.75,-2);
\draw (1,0) -- (1,-2);
\draw (1.25,0) -- (1.25,-2);
\draw (1.5,0) -- (1.5,-2);
\draw (1.75,0) -- (1.75,-2);

\draw[decorate, decoration = {brace}, thick] (0,0.2) -- (2,0.2) node[midway, yshift=0.3cm] {\scriptsize $n$};
\draw[decorate, decoration = {brace}, thick] (-0.2,-2) -- (-0.2,0) node[midway, xshift=-0.3cm] {\scriptsize $n$};

\end{tikzpicture}
				\caption*{\hyperref[lem:independence1]{Single entry independence I}: The entries in a $n \times n$ square (red and blue) are mutually independent when conditioned on the tail of all the $n$ rows, the tail of all the $n$ columns, as well as the tail of the whole array (gray).}
			\end{subfigure}
			\hspace{1cm}
			\begin{subfigure}[b]{0.42\textwidth}
				\centering
				\tikzsetfigurename{ah_fig_indep_lemma2}
				\begin{tikzpicture}[scale=0.6]

\draw[gray!30!white] (2,0) -- (0,0) -- (0,-2);

\draw[draw=none, fill=blue!20!white] (2,0) -- (6,0) -- (6,-0.75) -- (2,-0.75) -- cycle;
\draw[draw=none, fill=blue!20!white] (2,-2) -- (6,-2) -- (6,-1) -- (2,-1) -- cycle;

\draw[draw=none, fill=blue!20!white] (0,-2) -- (0,-6) -- (0.5,-6) -- (0.5,-2) -- cycle; 
\draw[draw=none, fill=blue!20!white] (2,-2) -- (2,-6) -- (0.75,-6) -- (0.75,-2) -- cycle; 

\draw[draw=none, fill=black!20!white] (6,-2) -- (2,-2) -- (2,-6) -- (6,-6) -- cycle;
\draw (6,-2) -- (2,-2) -- (2,-6);

\draw[draw=none,fill=black!20!white] (0.5,-2) -- (0.5,-6) -- (0.75,-6) -- (0.75,-2) -- cycle;
\draw (0.5,-2) -- (0.5,-6);
\draw (0.75,-2) -- (0.75,-6);

\draw[draw=none,fill=black!20!white] (2,-0.75) -- (6,-0.75) -- (6,-1)--  (2,-1) -- cycle;
\draw (2,-0.75) -- (6,-0.75);
\draw (2,-1) -- (6,-1);

\draw[very thick] (6,0) -- (2,0) -- (2,-2) -- (0,-2) -- (0,-6);
\draw[very thick, fill=red!50!white] (0.5, -1) rectangle (0.75,-0.75);

\draw[decorate, decoration = {brace}, thick] (0,0.2) -- (2,0.2) node[midway, yshift=0.3cm] {\scriptsize $n$};
\draw[decorate, decoration = {brace}, thick] (-0.2,-2) -- (-0.2,0) node[midway, xshift=-0.3cm] {\scriptsize $n$};

\end{tikzpicture}
				\caption*{\hyperref[lem:independence2]{Single entry independence II}: A single output (red) is independent of the other rows and columns (blue) when conditioned on the tail of its row, the tail of its column and the tail of the whole array (gray).}
			\end{subfigure}
			\vspace*{0.5cm}
	
			\begin{subfigure}[b]{0.5\textwidth}
				\centering
				\tikzsetfigurename{ah_fig_indep_lemma3}
				\begin{tikzpicture}[scale=0.6]

\draw[gray!30!white] (2,0) -- (0,0) -- (0,-2);

\draw[draw=none, fill=red!20!white] (0,-2) -- (0,-6) -- (0.25, -6) -- (0.25,-2) -- cycle;
\draw[draw=none, fill=blue!20!white] (0.5,-2) -- (0.5,-6) -- (0.25, -6) -- (0.25,-2) -- cycle;
\draw[draw=none, fill=red!20!white] (0.5,-2) -- (0.5,-6) -- (0.75, -6) -- (0.75,-2) -- cycle;
\draw[draw=none, fill=blue!20!white] (1,-2) -- (1,-6) -- (0.75, -6) -- (0.75,-2) -- cycle;
\draw[draw=none, fill=red!20!white] (1,-2) -- (1,-6) -- (1.25, -6) -- (1.25,-2) -- cycle;
\draw[draw=none, fill=blue!20!white] (1.5,-2) -- (1.5,-6) -- (1.25, -6) -- (1.25,-2) -- cycle;
\draw[draw=none, fill=red!20!white] (1.5,-2) -- (1.5,-6) -- (1.75, -6) -- (1.75,-2) -- cycle;
\draw[draw=none, fill=blue!20!white] (2,-2) -- (2,-6) -- (1.75, -6) -- (1.75,-2) -- cycle;

%%----

\draw[draw=none, fill=blue!20!white] (2,0) -- (6,0) -- (6,-0.25) -- (2,-0.25) -- cycle;
\draw[draw=none, fill=red!20!white] (2,-0.5) -- (6,-0.5) -- (6,-0.25) -- (2,-0.25) -- cycle;
\draw[draw=none, fill=blue!20!white] (2,-0.5) -- (6,-0.5) -- (6,-0.75) -- (2,-0.75) -- cycle;
\draw[draw=none, fill=red!20!white] (2,-1) -- (6,-1) -- (6,-0.75) -- (2,-0.75) -- cycle;
\draw[draw=none, fill=blue!20!white] (2,-1) -- (6,-1) -- (6,-1.25) -- (2,-1.25) -- cycle;
\draw[draw=none, fill=red!20!white] (2,-1.5) -- (6,-1.5) -- (6,-1.25) -- (2,-1.25) -- cycle;
\draw[draw=none, fill=blue!20!white] (2,-1.5) -- (6,-1.5) -- (6,-1.75) -- (2,-1.75) -- cycle;
\draw[draw=none, fill=red!20!white] (2,-2) -- (6,-2) -- (6,-1.75) -- (2,-1.75) -- cycle;

\draw (0.25,-2) -- (0.25,-6);
\draw (0.5,-2) -- (0.5,-6);
\draw (0.75,-2) -- (0.75,-6);
\draw (1,-2) -- (1,-6);
\draw (1.25,-2) -- (1.25,-6);
\draw (1.5,-2) -- (1.5,-6);
\draw (1.75,-2) -- (1.75,-6);

\draw (2,-0.25) -- (6,-0.25);
\draw (2,-0.5) -- (6,-0.5);
\draw (2,-0.75) -- (6,-0.75);
\draw (2,-1) -- (6,-1);
\draw (2,-1.25) -- (6,-1.25);
\draw (2,-1.5) -- (6,-1.5);
\draw (2,-1.75) -- (6,-1.75);

\draw[draw=none, fill=black!20!white] (6,-2) -- (2,-2) -- (2,-6) -- (6,-6) -- cycle;
\draw (6,-2) -- (2,-2) -- (2,-6);

\draw[very thick] (6,0) -- (2,0) -- (2,-2) -- (0,-2) -- (0,-6);

\draw[decorate, decoration = {brace}, thick] (0,0.2) -- (2,0.2) node[midway, yshift=0.3cm] {\scriptsize $n$};
\draw[decorate, decoration = {brace}, thick] (-0.2,-2) -- (-0.2,0) node[midway, xshift=-0.3cm] {\scriptsize $n$};

\end{tikzpicture}
				\caption*{\hyperref[lem:independence3]{Row and column independence}: All $n$ columns and rows (red and blue) are mutually independent when conditioned on the tail of the array (gray).}
			\end{subfigure}
			\caption{Visual representation of the conditional independence relations proved in the three lemmas in \cref{sec:3_cond_ind}.
			Each of the three big squares represents an array of objects indexed by two positive integers.
			Each of the objects is isomorphic to $X$ and represented by a little square. %for an integer $a \in \N$ visualized for the matrix $(X_{ij})_{i,j \in \N}$. 
			The white area is always marginalized, while the gray area is always conditioned on.}
			\label{fig:IndependenceLemmas}
		\end{figure}
	
		\begin{lemma}[Single entry independence I]\label{lem:independence1}
			%For any row-column exchangeable state $p \colon I \to X^{\N, \N}$ and any finite subset $\A$ of $\N$
			We have
			%\footnotemark{}
			%\footnotetext{See \cref{sec:cond_ind_2} for more details on the notation used.}%
			\begin{equation}
				\bigperp_{i,j\in \A} X^{i,j} \largemid X^{\A, \compl{\A}}, X^{\compl{\A},\A}, X^{\compl{\A}, \compl{\A}}.
			\end{equation}
		\end{lemma}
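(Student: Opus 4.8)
The plan is to derive the claim from the Synthetic de Finetti Theorem (\cref{thm:definetti}) applied \emph{twice}{\,---\,}once to the rows and once to the columns of $p${\,---\,}and then to fuse the two resulting relations with the Partition Lemma (\cref{lem:partition}). Write $W \coloneqq X^{\A,\compl{\A}} \otimes X^{\compl{\A},\A} \otimes X^{\compl{\A},\compl{\A}}$ for the conditioning context, i.e.\ everything outside the $n \times n$ block $X^{\A,\A}$. Since $X^{\N,\N} \cong (X^{\N})^{\N}$ as a Kolmogorov product, we may regard $p$ as a state on a countable power whose $i$-th factor is the entire $i$-th row $X^{i,\N} \cong X^{\N}$; row-column exchangeability specialises to invariance under permuting these row-factors, so this state is exchangeable. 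Applying the conditional-independence form of de Finetti (\cref{eq:definetti_cond_ind}) to the rows therefore yields $\bigperp_{i \in \A} X^{i,\N} \largemid X^{\compl{\A},\N}$, and symmetrically $\bigperp_{j \in \A} X^{\N,j} \largemid X^{\N,\compl{\A}}$ for the columns.

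Next I would upgrade the row relation to the block statement $\bigperp_{i \in \A} X^{i,\A} \largemid W$ by enlarging the conditioning context. By \cref{cor:partition} it suffices to prove, for each fixed $i \in \A$, the binary relation $X^{i,\A} \perp X^{\complsm{i},\A} \mid W$. Again by \cref{cor:partition}, de Finetti for the rows gives $X^{i,\N} \perp X^{\complsm{i},\N} \mid X^{\compl{\A},\N}$; splitting each row as $X^{i,\N} = X^{i,\A} \otimes X^{i,\compl{\A}}$ rewrites this as
\[
	X^{i,\A}, X^{i,\compl{\A}} \perp X^{\complsm{i},\A}, X^{\complsm{i},\compl{\A}} \mid X^{\compl{\A},\A}, X^{\compl{\A},\compl{\A}}.
\]
A first weak union moves $X^{i,\compl{\A}}$ into the conditioning set; after a symmetry flip a second weak union moves $X^{\complsm{i},\compl{\A}}$ in as well. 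Since $X^{i,\compl{\A}} \otimes X^{\complsm{i},\compl{\A}} = X^{\A,\compl{\A}}$, the conditioning context becomes exactly $W$, and a final symmetry leaves $X^{i,\A} \perp X^{\complsm{i},\A} \mid W$, as wanted. Reassembling over $i$ via \cref{cor:partition} yields $\bigperp_{i \in \A} X^{i,\A} \mid W$.

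The identical bookkeeping applied to the columns gives $\bigperp_{j \in \A} X^{\A,j} \mid W$, and I would then conclude with \cref{lem:partition}: take $P_1$ to be the partition of $\A \times \A$ into rows $\{\{i\}\times\A\}_{i\in\A}$ and $P_2$ the partition into columns $\{\A\times\{j\}\}_{j\in\A}$. Their common refinement $P_1 \vee P_2$ is the partition into singletons, so \cref{lem:partition} upgrades the two block independences to $\bigperp_{(i,j)\in\A\times\A} X^{i,j} \mid W$, which is precisely the assertion. The one genuinely delicate point{\,---\,}as opposed to the routine semigraphoid manipulations{\,---\,}is the reindexing in the first paragraph: one must check that treating a whole row (an object $X^{\N}$) as a single de Finetti ``variable'' is legitimate, i.e.\ that the associativity isomorphism $X^{\N,\N}\cong(X^{\N})^{\N}$ identifies the row-shuffling action with a special case of row-column exchangeability and that the resulting power is again a Kolmogorov product, so that \cref{thm:definetti} genuinely applies.
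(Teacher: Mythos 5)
Your proposal is correct and follows essentially the same route as the paper's proof: de Finetti applied to the rows (and columns) in its conditional-independence form \eqref{eq:definetti_cond_ind}, the binary reductions via \cref{cor:partition}, two weak-union steps to enlarge the conditioning context to $X^{\A,\compl{\A}} \otimes X^{\compl{\A},\A} \otimes X^{\compl{\A},\compl{\A}}$, and a final fusion of the row and column relations via the Partition Lemma (\cref{lem:partition}). The only differences are presentational{\,---\,}you make explicit the symmetry flips hidden in the paper's ``WU'' steps and flag the reindexing $X^{\N,\N}\cong(X^{\N})^{\N}$ that the paper invokes implicitly{\,---\,}both of which are fine.
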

		\begin{proof}
			Since $p$ is row-column exchangeable, it is in particular exchangeable under all finite permutations of the rows, which are indexed by the first components in the superscripts.
			Thus applying the de Finetti Theorem in the form of \cref{eq:definetti_cond_ind} gives $\bigperp_{i\in \A} X^{i, \N} \mid X^{\compl{\A}, \N}$.
			We can then use the following implications
			\begin{equation}
			\begin{array}{cclr}
				\bigperp_{i\in \A} X^{i, \N} \largemid X^{\compl{\A}, \N}&\overset{\text{\cref{cor:partition}}}{\Longleftrightarrow} & X^{i,\N} \largeperp X^{\complsm{i},\N} \largemid X^{\compl{\A}, \N} &\quad \text{for all }i \in \A \\
				&\overset{\text{WU}}{\Longrightarrow}& X^{i, \A} \largeperp X^{\complsm{i},\N} \largemid X^{\compl{\A},\N}, X^{i, \compl{\A}} & \text{for all }i \in \A \\
				&\overset{\text{WU}}{\Longrightarrow}& X^{i, \A} \largeperp X^{\complsm{i},{\A}} \largemid X^{\compl{\A},\N}, X^{i, \compl{\A}}, X^{\complsm{i},\compl{\A}}& \text{for all }i \in \A\\
				&\overset{\text{\cref{cor:partition}}}{\Longleftrightarrow} & \bigperp_{i \in \A} X^{i,\A} \largemid X^{\A, \compl{\A}}, X^{\compl{\A},\A}, X^{\compl{\A}, \compl{\A}}.&
			\end{array}
			\end{equation}
			which together amount to applying `weak union' for multivariate conditional independence.
			Analogously, exchangeability of the columns of $p$ implies $\bigperp_{j \in \A} X^{\A,j} \largemid X^{\A, \compl{\A}}, X^{\compl{\A},\A}, X^{\compl{\A}, \compl{\A}}$, and so by the Partition Lemma (\cref{lem:partition}) we obtain the desired statement.
		\end{proof}
	
		In the next independence lemma, we consider a single output in $X^{i,j}$ in $X^{\A,\A}$ and marginalize the other outputs in the square $\A \times \A$.
	
		\begin{lemma}[Single entry independence II]\label{lem:independence2}
			%Under identical assumptions to \cref{lem:independence1}
			We have 
			\begin{equation}\label{eq:independence2}
				X^{i,j} \largeperp X^{\complsm{i}, \compl{\A}}, X^{\compl{\A},\complsm{j}} \largemid X^{\compl{\A},j}, X^{i, \compl{\A}}, X^{\compl{\A}, \compl{\A}}.
			\end{equation}
			for all $i,j \in \A$.
		\end{lemma}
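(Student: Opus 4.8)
The plan is to obtain the asserted relation by combining two simpler conditional independences via the semigraphoid properties of \cref{lem:semigraphoid}, using the de Finetti relation \cref{eq:definetti_cond_ind} (applied once to the columns and once to the rows of $p$) as the only genuine probabilistic input. Writing $E \coloneqq X^{i,j}$, $A_1 \coloneqq X^{\complsm{i},\compl{\A}}$, $A_2 \coloneqq X^{\compl{\A},\complsm{j}}$ and denoting the target conditioning tuple by $W \coloneqq (X^{\compl{\A},j}, X^{i,\compl{\A}}, X^{\compl{\A},\compl{\A}})$, the claim \cref{eq:independence2} reads $E \perp A_1, A_2 \mid W$, and I would reach it by the contraction property (\cref{lem:semigraphoid}) from the two premises $E \perp A_1 \mid W$ and $E \perp A_2 \mid A_1, W$.

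For the second premise I would apply de Finetti to the columns of $p$, which are exchangeable by row-column exchangeability: treating each column $X^{\N,j}$ as a single super-output, \cref{eq:definetti_cond_ind} together with \cref{cor:partition} gives $X^{\N,j} \perp X^{\N,\complsm{j}} \mid X^{\N,\compl{\A}}$. Decomposition restricts the left factor to $X^{i,j}, X^{\compl{\A},j}$, weak union moves $X^{\compl{\A},j}$ into the conditioning, and a final decomposition restricts the right factor to $A_2 = X^{\compl{\A},\complsm{j}}$. Since $X^{\N,\compl{\A}} = (X^{i,\compl{\A}}, X^{\complsm{i},\compl{\A}}, X^{\compl{\A},\compl{\A}})$, the resulting conditioning is precisely $A_1, W$, giving $E \perp A_2 \mid A_1, W$.

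The first premise is the delicate one, and is where I expect the main obstacle. A naive application of de Finetti to the rows of $p$ conditions on the entire row-tail $X^{\compl{\A},\N}$, which contains $A_2 = X^{\compl{\A},\complsm{j}}$; as conditioning variables cannot be dropped, this would only yield $E \perp A_1 \mid A_2, W$ rather than $E \perp A_1 \mid W$. Combining that with the second premise would leave me with the two \emph{intersection-type} relations $E \perp A_1 \mid A_2, W$ and $E \perp A_2 \mid A_1, W$, from which $E \perp A_1, A_2 \mid W$ does \emph{not} follow by semigraphoid reasoning alone, since the intersection property is not among those listed in \cref{lem:semigraphoid}. The remedy is to marginalize the unwanted block columns before invoking de Finetti: the marginal of $p$ onto the columns $\{j\} \cup \compl{\A}$ is still invariant under row permutations, so \cref{eq:definetti_cond_ind} applied to its rows (via \cref{cor:partition}) gives $X^{i,\{j\}\cup\compl{\A}} \perp X^{\complsm{i},\{j\}\cup\compl{\A}} \mid X^{\compl{\A},\{j\}\cup\compl{\A}}$, an expression in which $A_2$ simply never appears. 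Decomposition and weak union, moving $X^{i,\compl{\A}}$ into the conditioning and restricting the right factor to $A_1 = X^{\complsm{i},\compl{\A}}$, then deliver $E \perp A_1 \mid W$ with exactly the right conditioning set.

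With both premises established, contraction (\cref{lem:semigraphoid}) applied to $E \perp A_2 \mid A_1, W$ and $E \perp A_1 \mid W$ yields $E \perp A_1, A_2 \mid W$, which is the claim after unfolding the abbreviations. The only nonroutine insight is the marginalization step for the first premise: it is what converts the row-de Finetti relation into a legitimate contraction premise rather than an unusable intersection premise, and it is the reason the same derivation could not be carried out symmetrically from the two full-tail de Finetti relations alone.
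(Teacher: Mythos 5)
Your proposal is correct and is essentially the paper's own proof, just with the roles of rows and columns transposed: the paper splits the claim into $X^{i,j} \perp X^{\compl{\A},\complsm{j}} \mid W$ (proved by first marginalizing the rows $\complsm{i}$ and then applying de Finetti to the columns of that marginal) and $X^{i,j} \perp X^{\complsm{i},\compl{\A}} \mid X^{\compl{\A},\complsm{j}}, W$ (from de Finetti on the rows of the full array), and combines them by contraction, exactly mirroring your two premises. Your explicit observation that the marginalization step is what avoids an unusable pair of intersection-type relations is a point the paper leaves implicit, but the underlying argument is the same.
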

		
		\begin{proof}
			We split the proof into the following two statements, to be proven separately:
			\begin{enumerate}
				\item\label{IndII:i} $X^{i,j} \largeperp X^{\compl{\A}, \complsm{j}} \largemid X^{\compl{\A},j}, X^{i, \compl{\A}}, X^{\compl{\A},\compl{\A}}$
				\item\label{IndII:ii} $X^{i,j} \largeperp X^{\complsm{i}, \compl{\A}} \largemid X^{\compl{\A},j} , X^{i, \compl{\A}},  X^{\compl{\A}, \compl{\A}}, X^{\compl{\A},\complsm{j}}$
			\end{enumerate}
			The claimed conditional independence \eqref{eq:independence2} then follows by the contraction semigraphoid axiom.
			
			To show \ref{IndII:i}, we first consider the marginal of $p$ on
			\begin{equation}
				\bigotimes_{k\in \N} \left(X^{i,k} \otimes X^{\compl{\A},k} \right),
			\end{equation}
			which corresponds to removing all rows with index in $\A \setminus \{i\}$ from the array.
			Since this marginal is invariant under all finite permutations of the column index $k$, we have the conditional independence
			\begin{equation}
				{\bigperp_{k \in {\A}} \left(X^{i,k} , X^{\compl{\A},k} \right)\mid X^{i,\compl{\A}}, X^{\compl{\A},\compl{\A}}}
			\end{equation}
			by the de Finetti Theorem (\cref{eq:definetti_cond_ind}).
			To this we can apply the following implications:
			\begin{equation}
				\begin{array}{ccl}
					\bigperp_{k \in {\A}} \left(X^{i,k} , X^{\compl{\A},k} \right) \largemid X^{i,\compl{\A}}, X^{\compl{\A},\compl{\A}} & \overset{\text{\cref{cor:partition}}}{\Longrightarrow} & X^{i,j}, X^{\compl{\A},j} \largeperp X^{i, \complsm{j} }, X^{\compl{\A},\complsm{j}} \largemid X^{i,\compl{\A}}, X^{\compl{\A},\compl{\A}}\\
					&\overset{\text{Dec}}{\Longrightarrow}& X^{i,j}, X^{\compl{\A},j} \largeperp X^{\compl{\A},\complsm{j}} \largemid X^{i,\compl{\A}}, X^{\compl{\A},\compl{\A}} \\
					&\overset{\text{WU}}{\Longrightarrow}& X^{i,j} \largeperp X^{\compl{\A}, \complsm{j}} \largemid X^{\compl{\A},j}, X^{i, \compl{\A}}, X^{\compl{\A},\compl{\A}}.
				\end{array}
			\end{equation}
			Hence \ref{IndII:i} is proven.
			
			To show \ref{IndII:ii}, we apply the de Finetti Theorem to the rows of the array and obtain ${\bigperp_{k\in \A} X^{k, \N} \mid X^{\compl{\A},\N}}$.
			We now conclude as follows:
			\begin{equation}
			\begin{array}{ccl}
				\bigperp_{k\in \A} X^{k, \N}\mid X^{\compl{\A},\N}& \overset{\text{\cref{cor:partition}}}{\Longrightarrow} & X^{i, \N} \largeperp X^{\complsm{i}, \N} \largemid  X^{\compl{\A},\N} \\
				&\overset{\text{Dec}}{\Longrightarrow}&  X^{i, j}, X^{i, \compl{\A}} \largeperp X^{\complsm{i}, \compl{\A}} \largemid X^{\compl{\A},\N} \\
				&\overset{\text{WU}}{\Longrightarrow} & X^{i,j} \largeperp X^{\complsm{i}, \compl{\A}} \largemid X^{i, \compl{\A}}, X^{\compl{\A},\N},
			\end{array}
			\end{equation}
			where the resulting relation can be identified with the one from \ref{IndII:ii} by splitting the object $X^{\compl{\A},\N}$ into a tensor product of $X^{\compl{\A},j}$, $X^{\compl{\A},\complsm{j}}$ and $X^{\compl{\A}, \compl{\A}}$.
		\end{proof}
		
		The final independence lemma establishes a mutual independence between the tails of the first $n$ rows and the first $n$ columns, when conditioned on the array tail $X^{\compl{\A}, \compl{\A}}$.
		
		\begin{lemma}[Row and column independence]\label{lem:independence3}
			%Under identical assumptions to \cref{lem:independence2},
			We have 
			\begin{equation}
%				\left(\bigperp_{i\in \A} X^{i, \compl{\A}} \right) \perp \left(\bigperp_{j\in \A} X^{\compl{\A},j}\right) 
				\bigperp_{i = 1}^{2n} Y_i \largemid X^{\compl{\A}, \compl{\A}},
			\end{equation}
			where we use the shorthand
			\begin{equation}
					Y_i \coloneqq \begin{cases}  
							X^{i, \compl{\A}}		& \text{if } i \in \{1, \ldots, n\} \\
							X^{\compl{\A},i-n}	& \text{if } i \in \{n+1, \ldots, 2n\}.
						\end{cases}	
			\end{equation}
		\end{lemma}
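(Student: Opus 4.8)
The plan is to combine the de Finetti relations for the rows and the columns of $p$ with the shift covariance property (\cref{cor:shift_covariance}); the latter is the crucial ingredient, since it is exactly what lets us shrink the conditioning set down to the array tail, which de Finetti alone cannot achieve. Throughout I abbreviate $W \coloneqq X^{\compl{\A},\compl{\A}}$. First I would apply \cref{cor:shift_covariance} to $p$, regarding the row index as the exchangeable coordinate and splitting each row $X^{i,\N}$ into its tail $X^{i,\compl{\A}}$ (playing the role of the conditioning variable $X$ in \cref{cor:shift_covariance}) and its head $X^{i,\A}$ (the role of $Y$). Since the first $n$ rows correspond to the index set $\A$, the corollary yields directly the cross-independence
\begin{equation}
	X^{\A,\compl{\A}} \largeperp X^{\compl{\A},\A} \largemid W ,
\end{equation}
which I will call $(\star)$. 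Applying the corollary to the columns instead produces the same relation by symmetry.

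Next I would derive, for each $i \in \A$, the relation $X^{i,\compl{\A}} \largeperp X^{\complsm{i},\compl{\A}}, X^{\compl{\A},\A} \largemid W$. Starting from $\bigperp_{i\in\A} X^{i,\N} \largemid X^{\compl{\A},\N}$, which holds by the de Finetti Theorem applied to the rows (\cref{eq:definetti_cond_ind}), \cref{cor:partition} gives $X^{i,\N} \largeperp X^{\complsm{i},\N}\largemid X^{\compl{\A},\N}$, and two applications of decomposition (with symmetry) to keep only the tail-column parts on each side yield $X^{i,\compl{\A}}\largeperp X^{\complsm{i},\compl{\A}}\largemid X^{\compl{\A},\A}, W$. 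On the other hand, decomposing $(\star)$ gives $X^{i,\compl{\A}}\largeperp X^{\compl{\A},\A}\largemid W$. Feeding these two into the contraction property (\cref{lem:semigraphoid}) produces
\begin{equation}
	X^{i,\compl{\A}} \largeperp X^{\complsm{i},\compl{\A}}, X^{\compl{\A},\A} \largemid W .
\end{equation}
The analogous argument applied to the columns (de Finetti for columns together with $(\star)$) gives, for each $j\in\A$,
\begin{equation}
	X^{\compl{\A},j}\largeperp X^{\compl{\A},\complsm{j}}, X^{\A,\compl{\A}}\largemid W .
\end{equation}

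Finally I would assemble the $2n$ relations above. Each of them asserts that one of the blocks $Y_i$ is independent, given $W$, of the union of all the other blocks: indeed $X^{\complsm{i},\compl{\A}}\otimes X^{\compl{\A},\A}$ is exactly the join of all target blocks other than $X^{i,\compl{\A}}$, and similarly for the column blocks. Each such relation is the conditional independence displayed by a two-block partition $\{\,\I,\ \mathcal{B}\setminus\I\,\}$ of the index set $\mathcal{B}\coloneqq (\A\times\compl{\A})\cup(\compl{\A}\times\A)$, and the common refinement of all $2n$ of these two-block partitions is precisely the partition into the singleton blocks $Y_1,\dots,Y_{2n}$. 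Iterating the Partition Lemma (\cref{lem:partition}) over these partitions therefore yields $\bigperp_{i=1}^{2n} Y_i \largemid W$, as claimed. I expect the main obstacle to be conceptual rather than computational: one must recognize that \cref{cor:shift_covariance}, applied with the correct row/column grouping, is exactly the tool that decouples the row-tail block from the column-tail block conditioned \emph{only} on $W$. Once $(\star)$ is in hand, everything else is routine semigraphoid bookkeeping combined with the Partition Lemma.
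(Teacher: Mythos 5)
Your proposal is correct and follows essentially the same route as the paper's proof: the cross-independence $X^{\A,\compl{\A}} \perp X^{\compl{\A},\A} \mid X^{\compl{\A},\compl{\A}}$ from \cref{cor:shift_covariance}, the per-row relation obtained by combining de Finetti (via \cref{cor:partition} and decomposition) with a decomposition of $(\star)$ through the contraction axiom, the symmetric argument for columns, and the final assembly of the $2n$ two-block relations via the Partition Lemma (which is exactly the content of \cref{cor:partition}). The only differences are cosmetic, such as your explicit bookkeeping of the partition-lattice refinement.
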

		
		So the $Y_1, \dots, Y_{2n}$ stand for the tails of the first $n$ rows and first $n$ columns together.
		These are the red and blue regions in the third display of \cref{fig:IndependenceLemmas}.
	
		\begin{proof}
			Splitting the whole array $X^{\N \times \N}$ into a tensor product $X^{\N, \A} \otimes X^{\N, \compl{\A}}$ of the first $n$ columns and the rest, we obtain the conditional independence $X^{\A,\compl{\A}} \perp X^{\compl{\A},\A} \mid X^{\compl{\A},\compl{\A}}$ from shift covariance (\cref{cor:shift_covariance}).
			This relation says that the first $n$ rows (together) are independent from the first $n$ columns (together), when conditioned on the tail of the array.
			Moreover, we also recall that the de Finetti Theorem applied to the rows gives $\bigperp_{i \in \A} X^{i, \N} \mid X^{\compl{\A}, \N}$. 
			
			We now argue as follows, where the two topmost relations are the ones we have just established:
			\begin{center}
				\begin{tikzpicture}
					\tikzstyle{level 1}=[sibling distance=7cm]
					\tikzstyle{level 2}=[sibling distance=7cm]
					\node (1){$X^{i, \compl{\A}} \largeperp X^{\complsm{i}, \compl{\A}}, X^{\compl{\A},\A} \largemid X^{\compl{\A}, \compl{\A}}.$} [grow'=up]
					child {node (2){$X^{i, \compl{\A}} \largeperp X^{\complsm{i},\compl{\A}} \largemid X^{\compl{\A}, \A}, X^{\compl{\A}, \compl{\A}}$}edge from parent[implies-,double equal sign distance,shorten <=3pt,parent anchor=north]
						child {node {$X^{i, \N} \largeperp X^{\complsm{i},\N} \largemid X^{\compl{\A}, \A}, X^{\compl{\A}, \compl{\A}}$} edge from parent[implies-,double equal sign distance,shorten <=3pt]
%							child {node {$\bigperp_{i \in \A} X^{i, \N} \mid X^{\compl{\A}, \N}$}edge from parent[implies-,double equal sign distance,shorten <=3pt]} 
							node[right,font=\footnotesize] {Dec}} node (A) {}
					}
					child {node (3){$X^{i,\compl{\A}} \largeperp X^{\compl{\A},\A} \largemid X^{\compl{\A},\compl{\A}}$}edge from parent[implies-,double equal sign distance,shorten <=3pt,parent anchor=north]
						child {node{$X^{\A,\compl{\A}} \largeperp X^{\compl{\A},\A} \largemid X^{\compl{\A},\compl{\A}}$}edge from parent[implies-,double equal sign distance,shorten <=3pt] 
						node[right,font=\footnotesize] {Dec}} 
						node (B) {}
					};
					\path (A)--(B) node[midway,font=\footnotesize] {Contraction};
				\end{tikzpicture}
			\end{center}
			Analogously, we can also obtain $X^{\compl{\A},j} \largeperp X^{\compl{\A}, \complsm{j}}, X^{\A,\compl{\A}} \largemid X^{\compl{\A}, \compl{\A}}$ by the same reasoning applied with respect to the columns.
			Since these two conditional independences hold for all $i$ and all $j$, the desired statement follows by \cref{cor:partition}.
		\end{proof}

	\subsection{Proof of the Synthetic Aldous--Hoover Theorem}\label{sec:mainProof}

	\begin{proof}[Proof of \cref{thm:AldousHoover}]
		We apply the three independence lemmas to the row-column exchangeable morphism ${p \colon I \to X^{\N, \N}}$. 
		This leads to the existence of morphisms $f$, $g$, $\widetilde{h}_{k \ell}$ and $h_{ij}$ such that
		\begin{equation*}
			\tikzfig{aldous_hoover_proof}
		\end{equation*}
%		\begin{equation}
%			\tikzfig{aldous_hoover_proof2}
%		\end{equation}
		where we leave it understood that the wires get suitably permuted in order for the equations to make sense, and:
		\begin{enumerate}
			\item The first step follows from \cref{lem:independence1}.
			\item The second step is a consequence of \cref{lem:independence3}. We can choose $f$ and $g$ independently of the indices $i$ and $j$ thanks to the de Finetti Theorem (\cref{thm:definetti}).
			Moreover, we use the invariance of $p$ under shifts of rows and columns to identify its $X^{\compl{\A}, \compl{\A}}$ marginal with $p$ itself by first marginalizing the first $n$ rows and then marginalizing the first $n$ columns from the remaining array.
		\item The third step uses \cref{lem:independence2}. Indeed, by~\cite[Proposition~6.9]{chojacobs2019strings} the considered conditional independence allows the morphism $\widetilde{h}_{ij}$ to be replaced, up to \as{} equality, with a morphism $h_{ij}$ that depends only on $X^{\compl{\A},j}$, $X^{i, \compl{\A}}$ and $X^{\compl{\A}, \compl{\A}}$.
		\end{enumerate}

		It remains to be shown that the morphisms $h_{ij}$ can be chosen independently of $i$ and $j$. 
		Applying a permutation $\sigma_r = (i \ k)$ to the rows and a permutation $\sigma_c = (j \ \ell)$ to the columns of $p$ gives
		\begin{equation}
			\tikzfig{aldous_hoover_proof3}
		\end{equation}
		by using the equation above (with some outputs marginalized and conditioned on $X^{\A,\A}$) together with the row-column exchangeability of $p$.
		In particular, since this holds for all $i, j, k, \ell \in \A$, setting $h \coloneqq h_{ij}$ for any choice of $i, j \in \A$ leads to the desired statement.
	\end{proof}

\section{The Aldous--Hoover Theorem via Markov Properties}\label{sec:markov_prop}

	In this section, we present another proof of \cref{thm:AldousHoover} by interpreting its corresponding string diagram as a Bayesian network and applying the ordered Markov property{\,---\,}a criterion for when a state can be written as a Bayesian network of a given form.
	For the sake of simplicity, throughout this section we assume that $\cC$ is a strict\footnotemark{} Markov category $\cC$ with conditionals.
	\footnotetext{A Markov category is strict if its underlying monoidal category is strict, which means that $A \otimes (B \otimes C) = (A \otimes B) \otimes C$ holds for all objects $A$, $B$, and $C$ and that both associators and unitors are identities. 
	Every Markov category is equivalent to a strict one such that the equivalence preserves copy morphisms \cite[Theorem 10.17]{fritz2019synthetic}. 
	Therefore, strictness is a convenient assumption we can make without loss of generality.}%
	
	A (causal) Bayesian network is defined via a directed acyclic graph (DAG) that encodes causal relationships. 
	\newterm{Generalized causal models}, as introduced in \cite{fritz2022dseparation, fritz2022free}, extend this framework from DAGs to string diagrams in Markov categories.
	This more general approach natively accommodates hidden variables, input variables, and certain symmetries, neither of which can be encoded when using traditional Bayesian networks.
	We now give a brief summary of the main definitions and results from \cite{fritz2022dseparation, fritz2022free} and refer the reader to these papers for a more detailed discussion.
	
%	String generalize DAGs since the causal structure of 
	Every DAG can also be represented as a string diagram.
	For example, a DAG of the form
	\begin{equation}\label{eq:linear_model_dag}
		\tikzfig{linear_model_dag}
	\end{equation}
	reads as a string diagram as
	\begin{equation}\label{eq:linear_model_string}
		\varphi \quad = \quad \tikzfig{linear_model_string}	
	\end{equation}
	Each node in the DAG corresponds to a wire in the string diagram, which is the output of a box whose inputs are determined by the incoming arrows of the node in the DAG. 
	The overall output wires correspond to the observable variables while the others are latent.

	The boxes themselves do not represent any particular Markov kernels or physical mechanisms, but rather serve as placeholders for depicting causal relationships and their labels are formal variables.
	Following \cite{fritz2022dseparation}, we use the term \emph{generalized causal model} to refer to any such string diagram.
	Formally these string diagrams serve as morphisms within a Markov category, specifically a \newterm{free Markov category}~\cite{fritz2022dseparation}.
	
	A morphism $f$ in a Markov category $\cC$ is said to be \newterm{compatible} with a generalized causal model $\varphi$ if there exists a strict Markov functor $F$ from the free Markov category containing $\varphi$ to $\cC$ such that $F(\varphi) = f$ holds, and such that $F$ maps the inputs and outputs of $\varphi$ to those of $f$.
	The idea is that such $F$ assigns an object of $\cC$ to every wire in $\varphi$ and a specific morphism in $\cC$ to every box in $\varphi$. 
	In a traditional interpretation, the object is the space of possible values of a variable associated to the wire and the morphism is a conditional probability that generates the values of its output variables given the values of its input variables.
	The requirement $F(\varphi) = f$ together with matching of inputs and outputs amounts to saying that the morphism $f$ can be decomposed as a string diagram whose form matches $\varphi$ (see \cref{fig:compatibility}).
		
	\begin{figure}
	\begin{center}
	\tikzfig{compatibility_visual}
	\end{center}
	\caption{A morphism $f$ is said to be compatible with a causal model $\varphi$ if there is a strict Markov functor that maps every wire $W$ to an object $W' = F(W)$ in $\cC$ and the boxes $\alpha_i$ to morphisms in $\cC$ such that $f = F(\varphi)$.}
	\label{fig:compatibility}
	\end{figure}		
%	\togo{Why $\stackrel{!}{=}$ in figure 6?}
%	\tob{Presumably because this is a condition on $F$ rather than automatically the case}
%	\anto{I've never seen this notation, got me a bit confused. What does it mean exactly? I think a normal equality would suffice.}
		
	Assuming that conditionals exist, \newterm{Markov properties}\footnotemark{} provide a practical criterion for deciding whether a given morphism $f$ is compatible with a given (generalized) causal model $\varphi$, by reducing the problem to checking whether the morphism $f$ satisfies (a particular subset of) conditional independence relations derived from the model $\varphi$.
%	\footnotetext{Although the names of Markov properties and Markov categories share a common ancestor in Markov chains, they highlight different perspectives on these processes. The former emphasizes the memoryless nature of the process, while the latter stems from its transitional behavior.}%
	\footnotetext{Note that Markov properties are not directly named after Markov categories or vice versa, although both terms are in honor of Andrey Markov.}%
	% \togo{Maybe highlight that the two uses of Markov have distinct origins?}
	% \tob{I've included a footnote, although that is perhaps suggesting a stronger distinction than there is? The two terms probably have ``Markov chain'' as a common ancestor (since Markov category derives from Markov kernel)}
	% \anto{changed to incorporate both, please check}
	% \togo{I like the less precise, but also less objectionable, version from Tobias more.}
	
	For example, a state $I \to X \otimes Y \otimes Z$ is compatible with the causal model depicted in \eqref{eq:linear_model_dag} and \eqref{eq:linear_model_string} if and only if it satisfies the conditional independence
	\begin{equation}
		X \perp Z \mid Y.
	\end{equation}
	This reflects the fact that the information flow from $X$ to $Z$ is obstructed by the box that outputs $Y$. 
	In other words, conditioning on the value of $Y$ removes any correlation between $X$ and $Z$.
		
	There are multiple different Markov properties known (see for example \cite[Section 1]{pearl2009causality} in the language of DAGs) that are necessary and sufficient for compatibility:
	\begin{enumerate}
		\item The \newterm{global Markov property} characterizes \emph{all} conditional independences implied by the causal model
			in terms of a combinatorial criterion known as \emph{$d$-separation}.
			In the language of Markov categories, $d$-separation can be phrased as topological disconnectedness after cutting wires in the string diagram \cite{fritz2022dseparation}.
		\item The \newterm{local Markov property} postulates a smaller set of conditional independences compared to the global Markov property. It asks for one conditional independence relation of a specific type for every box, which intuitively requires the outputs of the box to be independent of ``graphical non-descendants'' given its inputs.
		\item Similarly to the local Markov property, the \newterm{ordered Markov property} asks for one conditional independence relation for every box but of a different type, which is determined by a timing function. The timing function can be interpreted as specifying the order in which outputs are generated, and the Markov property intuitively requires that, given all the inputs of a box, the outputs of the box are independent of all the other outputs generated before them or at the same time.
	\end{enumerate}
	While the first two Markov properties have already been formalized in the language of Markov categories in \cite{fritz2022dseparation}, here we develop the ordered Markov property as another necessary and sufficient criterion for the compatibility with a causal model in \cref{ssec:orderedMarkov}, where we also review the local Markov property.
	%In fact, \cref{thm:causalCompdSep} proves a more general version the ordered Markov property, which specializes to both the local and the usual ordered Markov property.
	
	In \cref{ssec:Markov_AldousHoover}, we then employ the ordered Markov property to provide an alternative proof of the Aldous--Hoover Theorem in the strong form of \cref{thm:AldousHoover}.
	We expect this proof to facilitate a systematic study of representation theorems for hierarchical exchangeability \cite{austin2014hierarchical,lee2022deFinetti,jung2021generalization} in the future.

%	This expands the already proven criteria for the compatibility with a causal model introduced in \cite{fritz2022dseparation}.

\subsection{The Ordered Markov Property}
\label{ssec:orderedMarkov}

	%\ak{Terminology: names for boxes are $\alpha, \beta, \gamma, \eta$. Our running box will be denoted by $b$. So we write statements like for all boxes $b \in B(\varphi)$. Choosing $b = \alpha$, etc.}	
	
	Here, we introduce the ordered Markov property and show that it is a necessary and sufficient condition for a morphism to be compatible with a causal model.
	We start by introducing basic terminology for generalized causal models.
	
	For a given generalized causal model $\varphi$, we denote its set of wires by $W(\varphi)$ and its set of boxes by $B(\varphi)$. %\footnote{In \cite{fritz2022dseparation, fritz2022free}, the sets of wires and boxes are denoted by $W(G)$ and $B(G)$, respectively, where $G$ is the underlying hypergraph of $\varphi$. 
%	Since we omit the description of $\varphi$ in terms of $G$ here, we refer to the causal model directly instead of the underlying hypergraph.} 
%	\togo{I would omit the footnote here, but I'm okay if someone wants to keep it.}
%	\tob{In the interest of finishing the paper, let's not discuss individual footnotes unless there are issues with the content}
	For example, the causal model in \eqref{eq:linear_model_string} has
	\begin{equation}
		W(\varphi) = \{X,Y,Z\} \quad \text{ and } \quad B(\varphi) = \{\alpha, \beta,\gamma\}.
	\end{equation}
	For every box $b \in B(\varphi)$, we also write $\In(b)$ for the set of its input wires and $\Out(b)$ for the set of its output wires. %\footnote{
	% These sets generally do not encode the whole structure of the string diagram for two reasons: 
	% Firstly, they do not keep track of the ordering of inputs and outputs on a box.
	% Secondly, they also ignore the multiplicity in cases where one wire occurs through copy as input to a box multiple times.}
	% \togo{So what is the advantage of using sets of wires instead of lists of wires? I don't see any. Also, I would remove this footnote too, since we don't suggest that the sets should encode the string diagram.}
	% \tob{Sets are a bit simpler than lists, allowing notation like in \ref{it:order_direct} below}
	For example, in \eqref{eq:linear_model_string}, we have $\In(\beta) = \{X\}$ and $\Out(\beta) = \{Y\}$.
%For the precise definition of (generalized) causal model, we refer to \cite[Section 4]{fritz2022dseparation}.

It is convenient to introduce an order relation, which is similar to a causal order, although it considers both wires and boxes.

\begin{definition}
\label{defn:categoricalSep}
Let $\varphi$ be a generalized causal model.
\begin{enumerate}
%	\item\label{path_def} An \newterm{undirected path} between two wires $X,Y \in W(\varphi)$ is a sequence of wires
%		\begin{equation}
%			X = W_1, \, W_2, \, \ldots, \, W_{k}, \, W_{k+1} = Y
%		\end{equation}
%		together with a sequence of boxes $b_1, \ldots, b_k \in B(\varphi)$ such that for all $i=1,\dots,k$,
%		\begin{equation*}
%		W_i \in \In(b_i) \cup \Out(b_i) \quad \text{ and } \quad  W_{i+1} \in \In(b_i) \cup \Out(b_i).
%		\end{equation*}
%		We write $X - Y$ if there exists an undirected path between $X$ and $Y$.
%		\item\label{underlying_DAG} 
%		For two wires $A, B \in W(\varphi)$, we write $A \rightarrow B$ if there exists a box $b \in B(\varphi)$ such that
%		\[
%			A \in \In(b) \quad \text{ and } \quad B \in \Out(b).
%		\]
%		We write $A \twoheadrightarrow B$ if there exists a sequence of wires $W_1, \ldots, W_{k} \in W(\varphi)$ such that
%		\begin{equation}
%		\label{eq:directedPath} A = W_1 \rightarrow \ \ldots \ \rightarrow W_k = B.
%		\end{equation}
%		\item\label{box_arrows_notation}
%		Similarly, for two boxes $b,c \in B(\varphi)$, we write $b \rightarrow c$ if there exists a wire $W$ such that
%		\[
%			W \in \Out(b) \quad \text{ and } \quad W \in \In(c).
%		\]
%		We write $b \twoheadrightarrow c$ if there exists a sequence of boxes $d_1, \dots, d_k \in B(\varphi)$ such that 
%		\[
%			b = d_1 \rightarrow \dots \rightarrow d_k = c.
%		\]
	\item\label{it:order_direct}
		For a box $b \in B(\varphi)$ and wire $A \in W(\varphi)$, we write:
		\begin{enumerate}
			\item $A \to b$ if $A \in \In(b)$,
			\item $b \to A$ if $A \in \Out(b)$.
		\end{enumerate}
		In particular, we write $A \to B$ if there is a box $b$ such that $A \to b \to B$ and $b \to c$ if there is a wire $B$ such that $b \to B \to c$. 
	\item The reflexive transitive closure of $\to$ on $B(\varphi) \cup W(\varphi)$ is denoted by $\twoheadrightarrow$.
	\item For a box $b \in B(\varphi)$, its set of \newterm{non-descendants} is 
		\begin{equation*}
			\NonDesc(b) \coloneqq \{X \in W(\varphi) \mid b \not\twoheadrightarrow X\}.
		\end{equation*}
	\end{enumerate}
\end{definition}
For instance, the causal model $\varphi$ in \eqref{eq:linear_model_string} satisfies $\alpha \to X \to \beta \to Y \to \gamma \to Z$, and hence $\alpha \twoheadrightarrow Z$. 
In this example, the set of non-descendants of $\gamma$ is $\lbrace X,Y \rbrace$.

To further illustrate the concept, let us also consider the causal model given by a single box $\alpha$ with outputs $X$ and $Y$, and input $A$, i.e.\
	\begin{equation}
		\tikzfig{box_outputXY}
	\end{equation}
	% Then we have $X - Y$ but neither $X \twoheadrightarrow Y$ nor $Y \twoheadrightarrow X$.
	Then we have $\alpha \rightarrow X$ and $\alpha \rightarrow Y$, while $X \not\twoheadrightarrow Y$ and $Y \not\twoheadrightarrow X$.

	From now on, we restrict our investigation to the following kind of models.\footnote{The results of this subsection, and in particular \cref{thm:causalCompdSep}, can also be developed for the case $\In(\varphi) \neq \emptyset$ in the same fashion as in \cite{fritz2022dseparation}. For this purpose, one needs to introduce a certain asymmetric notion of conditional independence for morphisms (see \cite[Definition 20]{fritz2022dseparation}). Since we apply the present results only to causal models without inputs, we restrict to $\In(\varphi) = \emptyset$ for simplicity.}

\begin{definition}
	\label{def:bloom_without_inputs}
	A generalized causal model $\varphi$ is a \newterm{DAG-like model} if:
	\begin{enumerate}
		\item $\varphi$ is pure bloom~\cite[Definition 10]{fritz2022dseparation}, i.e.\ every wire is an overall output wire in exactly one way.
		\item $\varphi$ has no overall inputs, i.e.~$\In(\varphi) = \emptyset$.
		\item All boxes in $\varphi$ have distinct types in the underlying monoidal signature $\Sigma$.
	\end{enumerate}
\end{definition}

The idea behind our terminology is that writing a DAG as a string diagram results in a DAG-like model. 
On the other hand, generalized causal models that arise in this way are exactly DAG-like for which, additionally, every box has exactly one output~\cite{fritz2022dseparation}.

For example, the causal model in \eqref{eq:linear_model_string} is a Bayesian network, while
\begin{equation}
	\tikzfig{non_pure_bloom_morphism}
\end{equation}
is not because it is not pure bloom.

To state the local Markov property in our setting following~\cite[Definition 33]{fritz2022dseparation}, we recall the notation used there.
If $\varphi$ is a DAG-like model and $p$ is a state in a Markov category $\cC$ for which we are interested in determining compatibility with $\varphi$, then for every wire $X \in W(\varphi)$ we write $X'$ for the corresponding output of $p$, and similarly for a family of outputs.\footnote{An exception is conditional independence relations, where we omit the primes $'$ for simplicity and leave it understood that they refer to $p$ rather than to $\phi$.}
So as described in \cref{fig:compatibility}, compatibility amounts to the existence of a Markov functor $F$ with $F(\varphi) = p$ and $F(X) = X'$ for every $X \in W(\varphi)$.

\begin{definition}[local Markov property]\label{def:MarkovProperties}
	Let $\varphi$ be a DAG-like model and $p \colon I \to \Out(\varphi)'$ a state in $\cC$.
	Then $p$ satisfies the \newterm{local Markov property} with respect to $\varphi$ if for every box $b \in B(\varphi)$, we have
		\begin{equation*}
			\Out(b) \perp \NonDesc(b) \setminus \In(b) \ | \ \In(b) \; \text{ in } p.
		\end{equation*}
\end{definition}

\begin{example}
	For the causal model in \eqref{eq:linear_model_string}, taking $b = \gamma$ produces the only non-trivial conditional independence relation in this case, which is $Z \perp X \ | \ Y$ as mentioned above.
\end{example}

To introduce the ordered Markov property, we need one further concept.

\begin{definition}
A \newterm{timing function} for a generalized causal model $\varphi$ is a map $\timing \colon B(\varphi) \to \N$ such that
\begin{equation}
	\label{eq:timing_def}
	b \to c \quad \Longrightarrow \quad \timing(b) < \timing(c).
\end{equation}
\end{definition}
It follows that if $b \twoheadrightarrow c$, then also $\timing(b) \le \timing(c)$, and in fact $\timing(b) < \timing(c)$ if $b \neq c$.
We think of $\timing(b) < \timing(c)$ as saying that the event $c$ happens after the event $b$, and this motivates \cref{eq:timing_def}, since if $b$ causes $c$, then $c$ should happen after $b$.
%Note that introducing a timing function can be understood as an extension of the preorder $\twoheadrightarrow$ to a total preorder.
While the relevant information encoded in $\timing$ is merely the induced preorder relation on boxes, we find it intuitive to introduce this in the form of a function.
If a timing function is injective, then we obtain a total order on the boxes; for a string diagram obtained from a DAG, this corresponds to the standard ordering of the variables in the context of the ordered Markov property~\cite[Theorem~1.2.6]{pearl2009causality}.

\begin{example}\label{ex:timing_triangle}
	Consider the following DAG-like model:
	\begin{equation}\label{eq:ex_timing}
		\varphi \quad \coloneqq \quad \ \tikzfig{ex_timing}
	\end{equation}
	According to $\varphi$, the boxes $\eta$ and $\mu$ are caused by $\beta$ and $\gamma$ respectively, and these two are caused by $\alpha$.
	One example of a timing function, which is implicitly suggested by how we have drawn \cref{eq:ex_timing}, is given by
	\begin{equation}\label{eq:timing_ex}
		\timing (\alpha)=1, \qquad \timing (\beta)= \timing (\gamma)=2, \qquad \timing (\eta)=\timing(\mu)=3.
	\end{equation}	% \begin{equation}\label{eq:triangle_double}
	% 	\varphi \quad \coloneqq \quad \ \tikzfig{triangle_double}
	% \end{equation}
	% According to $\varphi$, we have the following nontrivial causal relations:
	% \begin{equation}
	% 	C_1, A_1 \twoheadrightarrow X, \qquad A_2, B_1 \twoheadrightarrow Y, \qquad B_2, C_2 \twoheadrightarrow Z.
	% \end{equation}
	% One example of a timing function, which is implicitly suggested by how we have drawn \cref{eq:triangle_double}, is given by
	% \begin{equation}\label{eq:timing_triangle}
	% 	\timing(C_1) = \timing(C_2) = 1, \qquad \timing(A_1) = \timing(A_2) = \timing(B_1) = \timing(B_2) = 2, \qquad \timing(X) = \timing(Y) = \timing(Z) = 3.
	% \end{equation}
\end{example}

\begin{remark}
	Although we could define a timing function alternatively on wires, defining it on boxes is more convenient for the proof of \cref{thm:causalCompdSep} below.	
	This choice is also consistent with the intuitive idea that boxes truly represent events, while wires merely encode connections between events or the flow of information.
\end{remark}

\begin{definition}
	For a generalized causal model $\varphi$ with timing function $\timing$, the \newterm{past} of a box $b \in B(\varphi)$ is
	\begin{equation}
		\Past_{\timing}(b) \coloneqq \bigcup_{c \in B(\varphi) \, : \, \timing(c) \,\le\, \timing(b)} \Out(c).
	\end{equation}
\end{definition}

Every wire in the past of $b$ is a non-descendant of $b$; more explicitly,
\begin{equation}\label{eq:inclusion_dec}
	\Past_{\timing}(b) \setminus \Out(b) \subseteq \NonDesc(b).
\end{equation}
Indeed to show the contrapositive, consider a wire $A \in W(\varphi)$ with $b \twoheadrightarrow A$.
Then $A$ is an output of some box $c$ with $b \twoheadrightarrow c$. 
If $c = b$, then we are done.
Otherwise we get $\timing(b) < \timing(c)$, and hence $A \not\in \Past_{\timing}(b)$.
This proves \cref{eq:inclusion_dec}.

\begin{example}\label{ex:Dec_Cl_containment}
	In general, the inclusion of \cref{eq:inclusion_dec} is strict.
	For example, let $\varphi$ be the causal model defined in \eqref{eq:ex_timing}, and let $\timing$ be the timing function defined in \eqref{eq:timing_ex}.
Then for the box $\beta$, we have
\begin{equation}
	\Past_{\timing}(\beta) \setminus \Out(\beta) = \{A,B,W\} \subsetneq \{ A,B,W,Z \} = \NonDesc(\beta).
\end{equation}
Moreover, there does not exist a timing function $\timing$ for which the equality $\Past_{\timing}(b) \setminus \Out(b) = \NonDesc(b)$ holds for all boxes $b$. 
This can be made precise by noting that whenever $\timing(\beta)\ge \timing (\mu)$, then $\timing (\gamma)< \timing (\eta)$, and in this case $\Past_{\timing}(\gamma) \setminus \Out(\gamma) \subsetneq\NonDesc(\gamma)$.
In conclusion, non-descendants typically take future events into account.
\end{example}

We are now able to define the ordered Markov property, which is quite similar to the local Markov property, but with the past playing the role of the non-descendants.

\begin{definition}[Ordered Markov property]
	Let $\varphi$ satisfy \cref{def:bloom_without_inputs} and let $p \colon I \to \Out(\varphi)'$ be a morphism in $\cC$.
	Then $p$ satisfies the \newterm{ordered Markov property} with respect to $\varphi$ if for every box $b \in B(\varphi)$, we have
		\begin{equation*}
			\Out(b) \perp \Past_{\timing}(b) \setminus (\In(b)\cup \Out(b)) \ | \ \In(b) \; \text{ in } p.
		\end{equation*}
\end{definition}

In words, conditioned on the inputs of a box, its outputs should be independent of all other things happening earlier or at the same time.
If $\timing$ is injective, then this specializes to the classical ordered Markov property for DAGs (see \cite[Theorem 1.2.6]{pearl2009causality}). 
It is worth noting that, based on the discussion at the end of \cref{ex:Dec_Cl_containment}, it is not always possible to retrieve the local Markov property from the ordered one.

We now show the main statement of this section, namely that the ordered Markov property is necessary and sufficient for the compatibility with a generalized causal model.

\begin{theorem}
	\label{thm:causalCompdSep}
	Let $\cC$ be a strict Markov category with conditionals and suppose that:
	\begin{itemize}
		\item $\varphi \colon I \to \bigotimes_{j=1}^m V_j$ is a DAG-like model (\cref{def:bloom_without_inputs}).
		\item $p \colon I \to \bigotimes_{j=1}^m V'_j$ is a state in $\cC$.
	\end{itemize}
	Then the following statements are equivalent:
	\begin{enumerate}
		\item\label{thm_compat} $p$ is compatible with $\varphi$.
		\item\label{thm_local} $p$ satisfies the local Markov property.
		\item\label{thm_ordered} $p$ satisfies the ordered Markov property.
	\end{enumerate}
\end{theorem}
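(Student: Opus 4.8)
The plan is to prove the three-way equivalence by closing the cycle of implications compatibility $\Rightarrow$ local Markov property $\Rightarrow$ ordered Markov property $\Rightarrow$ compatibility. Since the equivalence between compatibility and the local Markov property has already been established in \cite{fritz2022dseparation} (and our \cref{def:MarkovProperties} is stated to match \cite[Definition~33]{fritz2022dseparation}), that equivalence may simply be invoked. Hence the genuinely new content is the interplay with the ordered Markov property, which I would isolate into the two remaining implications.

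For local $\Rightarrow$ ordered, the argument is immediate from the decomposition semigraphoid property. For every box $b$, the inclusion
\[
	\Past_{\timing}(b) \setminus (\In(b) \cup \Out(b)) = \bigl(\Past_{\timing}(b) \setminus \Out(b)\bigr) \setminus \In(b) \subseteq \NonDesc(b) \setminus \In(b),
\]
which follows directly from \cref{eq:inclusion_dec}, shows that the conditional independence demanded by the ordered Markov property is a marginalization of the one demanded by the local Markov property. Applying decomposition (\cref{lem:semigraphoid}) box by box then yields the ordered Markov property.

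For ordered $\Rightarrow$ compatibility, the main step, I would give a categorical version of the classical factorization of a state along a topological order. First, refine $\timing$ to an injective total order, listing the boxes as $b_1, \dots, b_m$ with nondecreasing timing, and set $O_i \coloneqq \Out(b_i)$. Using the existence of conditionals, the chain rule writes $p$ as a sequential composite (interleaved with copies) of conditionals $k_i$ generating $O_i$ from the previously generated outputs $O_1, \dots, O_{i-1}$. The two combinatorial facts that make this work are that $\In(b_i) \subseteq O_1 \cup \dots \cup O_{i-1}$ (every input wire of $b_i$ is an output of a strictly earlier box, since $A \to b_i$ forces $\timing$ to strictly increase), and that $(O_1 \cup \dots \cup O_{i-1}) \setminus \In(b_i) \subseteq \Past_{\timing}(b_i) \setminus (\In(b_i) \cup \Out(b_i))$. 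The ordered Markov property together with decomposition then gives $\Out(b_i) \perp (O_1 \cup \dots \cup O_{i-1}) \setminus \In(b_i) \mid \In(b_i)$, so each $k_i$ may be replaced, up to $p$-a.s.\ equality, by a conditional depending only on $\In(b_i)$, via \cite[Proposition~6.9]{chojacobs2019strings}. Assembling these factored conditionals into a strict Markov functor $F$ that sends each box $b_i$ to its conditional yields $F(\varphi) = p$, i.e.\ compatibility.

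The hardest part will be the bookkeeping in this last implication: ensuring that the successive a.s.-equality substitutions remain valid \emph{in context} and compose back to exactly $p$, rather than merely to something $p$-a.s.\ equal. This requires propagating each local substitution through the chain-rule decomposition, which is precisely where \cite[Proposition~6.9]{chojacobs2019strings} and the stability of a.s.\ equality under composition do the real work. A secondary subtlety is the treatment of boxes sharing the same timing value, broken arbitrarily by the total-order refinement: for these one cannot appeal to the ordered Markov relation directly but must go through the decomposition property, as in the local $\Rightarrow$ ordered step.
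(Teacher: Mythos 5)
Your proposal is correct, and two of its three legs coincide with the paper's proof verbatim: the equivalence of compatibility with the local Markov property is quoted from \cite[Theorem~34]{fritz2022dseparation}, and the implication local $\Rightarrow$ ordered is exactly the paper's argument via \cref{eq:inclusion_dec} and the decomposition property of \cref{lem:semigraphoid}. Where you genuinely diverge is the implication ordered $\Rightarrow$ compatibility. You linearize all boxes at once, factor $p$ by the chain rule into conditionals $k_i$, and then repair each $k_i$ up to a.s.\ equality into a morphism factoring through $\In(b_i)$ via \cite[Proposition~6.9]{chojacobs2019strings}; your combinatorial facts ($\In(b_i) \subseteq O_1 \cup \dots \cup O_{i-1}$ and $(O_1 \cup \dots \cup O_{i-1}) \setminus \In(b_i) \subseteq \Past_{\timing}(b_i) \setminus (\In(b_i) \cup \Out(b_i))$) are both valid, using that a DAG-like model is pure bloom with no overall inputs. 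The paper instead runs an induction on $|B(\varphi)|$, peeling off a single box $b$ of \emph{maximal} time: the ordered Markov relation for $b$ gives, directly by the definition of displayed conditional independence, an \emph{exact} decomposition of $p$ into a marginal $q$ and a morphism $h$ depending only on $\In(b)'$; the restricted timing function then makes $q$ satisfy the ordered Markov property for the smaller model $\psi$, and the induction hypothesis plus strict Markov functoriality finishes. This organization buys precisely the two things you flag (or should flag) as the delicate points of your route: the ``compose back to exactly $p$'' bookkeeping vanishes because every inductive step is an exact equation rather than an a.s.\ substitution, and the reassembly $F(\varphi)=p$ reduces to one visible factorization of $\varphi$ into $\psi$ followed by $b$, instead of the comonoid-law argument you would need to identify your chain-rule slicing of $\varphi$ with $F(\varphi)$. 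Your version does go through{\,---\,}a.s.\ equality with respect to the intermediate marginals is stable under composing up the chain{\,---\,}so this is a difference of organization and cost, not of substance. Two smaller remarks: your ``secondary subtlety'' about tied timing values is a non-issue and is resolved exactly as you say, since $\Past_{\timing}(b_i)$ already contains the outputs of all equal-time boxes and decomposition restricts the relation to $(O_1 \cup \dots \cup O_{i-1}) \setminus \In(b_i)$; and the step where you ``freely define'' $F$ on each box silently uses the clause of \cref{def:bloom_without_inputs} that all boxes have distinct types in the signature, which is worth stating explicitly, as the paper does.
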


\begin{proof}
	The equivalence \ref{thm_compat} $\Longleftrightarrow$ \ref{thm_local} was already shown in \cite[Theorem 34]{fritz2022dseparation}.
	We refine the argument given there to prove the equivalence with \ref{thm_ordered}.

	\ref{thm_local} $\Longrightarrow$ \ref{thm_ordered}: 
	According to the local Markov property, for every box $b$, we have the conditional independence
\begin{equation}
	\Out(b) \perp \NonDesc(b) \setminus \In(b)  \ | \ \In(b).
\end{equation}
Then \cref{eq:inclusion_dec} and the decomposition property from \cref{lem:semigraphoid}\ref{semi:decomposition} yields the conditional independence
\begin{equation}\label{eq:ord_Markov_proof}
	\Out(b) \perp \Past_{\timing}(b) \setminus (\In(b)\cup \Out(b))  \ | \ \In(b),
\end{equation}
and hence the ordered Markov property.
	
	\ref{thm_ordered} $\Longrightarrow$ \ref{thm_compat}:
This is analogous to the proof of \cite[Theorem 34]{fritz2022dseparation}, but we include the details here for completeness in terms of the same notation.
We prove the claim by induction on the number of boxes $k \coloneqq |B(\varphi)|$.
The case $k = 1$ is trivial, since then compatibility always holds. For the step from $k$ to $k+1$, let $b$ be a box in $\varphi$ of maximal time, i.e.~such that $\timing(b) \ge \timing (c)$ for all $c \in B(\varphi)$. 
Then $\varphi$ in particular factorizes as\footnote{Here the unlabeled wires with dots refer to all the outputs besides the labeled ones.}
\begin{equation*}
\varphi \quad = \quad \tikzfig{proofLocalMarkov0}
\end{equation*}
where $\psi$ is another DAG-like model, and no box in $\psi$ has the same type in $\Sigma$ as $b$ does.
By assumption, $p$ satisfies the ordered Markov property with respect to $b$.
Since $\Past_{\timing}(b) = B(\varphi)$ holds by the maximality of $b$, we can thus decompose $p$ as
\begin{equation}\label{eq:proofLocalMarkov1} 
	\tikzfig{proofLocalMarkov1} \quad = \quad \tikzfig{proofLocalMarkov4a}
\end{equation}
for suitable morphisms $q$ and $h$ in $\cC$.

The restriction $\timing_{|_{B(\varphi) \setminus \lbrace b\rbrace}}$ is a valid timing function on the causal model $\psi$. Moreover, by the decomposition property from \cref{lem:semigraphoid}\ref{semi:decomposition}, the ordered Markov property for $p$ with respect to $\varphi$ implies the ordered Markov property for $q$ with respect to $\psi$, since every box remaining in $\psi$ still has the same inputs and outputs as in $\varphi$.
Therefore, the induction hypothesis implies that $g$ is compatible with $\psi$.

Since the box $b$ appears only once in $\varphi$, we can freely define the action of the strict Markov functor $F$ on $b$ as $F(b) \coloneqq h$, and by the compatibility of $g$ with $\psi$, we can moreover ensure that $F(\psi) = g$. Then, we obtain
\begin{equation*}
\tikzfig{proofLocalMarkov3} \quad = \quad \tikzfig{proofLocalMarkov4a} \quad = \quad \tikzfig{proofLocalMarkov4} \quad = \quad \tikzfig{proofLocalMarkov5}
\end{equation*}
where we use Equation \eqref{eq:proofLocalMarkov1} in the first step and in the last the fact that $F$ is a Markov functor.
\end{proof}

\subsection{An Ordered Markov Property from Row-Column-Exchangeability}
\label{ssec:Markov_AldousHoover}

In the following, we prove that a row-column exchangeable state satisfies the ordered Markov property of the Aldous--Hoover causal model, as depicted in the strong form of the statement (\cref{thm:AldousHoover}).
More precisely, for a fixed $\A = \{1,\ldots,n\}$, this causal model is defined as having wires $\{S_{i,j}\}_{i,j \in \A}$ corresponding to the matrix entries, $\{R_i\}_{i \in \A}$ and $\{C_j\}_{j \in \A}$ corresponding to the row and column tails, and one additional wire $T$ for the overall tail, and displays the following connections:
\begin{equation}
	\label{eq:AH_causal_structure}
	\varphi_{AH} \quad = \quad \tikzfig{AH_causal_structure}
\end{equation}
For example for $n = 2$, this causal model reads as
\begin{equation}\label{eq:AH_causal_structure_2}
	\varphi_{AH} \quad = \quad \tikzfig{AH_causal_structure_2}
\end{equation}
where the colors have no significance beyond visual distinction.
% This causal model has an underlying causal order with the nontrivial relations given by
% \begin{equation}
% 	\label{eq:AH_preorder}
% 	T \twoheadrightarrow R_i, C_j, S_{i,j}, \qquad R_i \twoheadrightarrow S_{i,j}, \qquad C_j \twoheadrightarrow S_{i,j},
% \end{equation}
% valid for all $i,j \in \A$.
% To formulate the ordered Markov property, we construct an appropriate timing function.
% We will get a particularly simple version of the ordered Markov property from the following specific choice splitting up the boxes into three time slices
To formulate the ordered Markov property, we consider the natural timing function given by 
\begin{equation}
	\timing(\alpha) = 1, \quad \timing(\beta_i) = \timing(\gamma_j) = 2, \quad \timing(\eta_{i,j}) = 3,
\end{equation}
for all values of the indices $i$ and $j$. 
Using \cref{nota:ah}, the induced ordered Markov properties read as follows.

\begin{lemma}[Ordered Markov conditions for the Aldous--Hoover causal model]
\label{lem:AH_orderedMarkov}
The ordered Markov property based on $\timing$ consists of the following conditional independence relations:
\begin{enumerate}
	\item\label{lem:AH_orderedMarkov_i} $R_i \perp R^{\complsm{i}}, C^{\A} \ | \ T$ for every $i \in \A$.
	\item\label{lem:AH_orderedMarkov_ii} $C_j \perp R^{\A}, C^{\complsm{j}} \ | \ T$ for every $j \in \A$.
	\item\label{lem:AH_orderedMarkov_iii} $S_{i,j} \perp S^{\complsm{i},\complsm{j}}, R^{\complsm{i}}, C^{\complsm{j}} \ | \ R_i, C_j, T$ for every $i,j \in \A$.
\end{enumerate}
\end{lemma}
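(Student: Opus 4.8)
This statement is a direct unwinding of the ordered Markov property, so the plan is simply to enumerate the boxes of $\varphi_{AH}$ and substitute their inputs, outputs, and pasts into the defining relation $\Out(b) \perp \Past_{\timing}(b) \setminus (\In(b) \cup \Out(b)) \mid \In(b)$. First I would read off the relevant data for each box directly from \cref{eq:AH_causal_structure}: the box $\alpha$ has $\Out(\alpha) = \{T\}$ and $\In(\alpha) = \emptyset$; each $\beta_i$ has $\Out(\beta_i) = \{R_i\}$ and $\In(\beta_i) = \{T\}$; each $\gamma_j$ has $\Out(\gamma_j) = \{C_j\}$ and $\In(\gamma_j) = \{T\}$; and each $\eta_{i,j}$ has $\Out(\eta_{i,j}) = \{S_{i,j}\}$ with $\In(\eta_{i,j}) = \{R_i, C_j, T\}$.

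Since $\timing$ takes only the three values $1, 2, 3$, there are just three distinct pasts to compute. I would record $\Past_{\timing}(\alpha) = \{T\}$; then $\Past_{\timing}(\beta_i) = \Past_{\timing}(\gamma_j) = \{T\} \cup R^{\A} \cup C^{\A}$, which collects the outputs of $\alpha$ together with those of all boxes at timing $2$; and finally $\Past_{\timing}(\eta_{i,j})$, which equals the entire set of wires because every box has timing at most $3$.

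With these in hand the three relations fall out by substitution. The box $\alpha$ gives an empty right-hand side and hence a trivial relation. For $\beta_i$, deleting $\In(\beta_i) = \{T\}$ and $\Out(\beta_i) = \{R_i\}$ from its past leaves $R^{\complsm{i}} \cup C^{\A}$, which is exactly relation \ref{lem:AH_orderedMarkov_i}; the computation for $\gamma_j$ is symmetric and produces relation \ref{lem:AH_orderedMarkov_ii}. For $\eta_{i,j}$, deleting its inputs $\{R_i, C_j, T\}$ and its output $\{S_{i,j}\}$ from the full wire set leaves the complementary row and column tails $R^{\complsm{i}}$ and $C^{\complsm{j}}$ together with the remaining entries of the $\A \times \A$ square, yielding relation \ref{lem:AH_orderedMarkov_iii}.

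The proof carries no probabilistic content: it is entirely a bookkeeping exercise in the index sets attached to $\varphi_{AH}$, while the claim that a row-column exchangeable state actually satisfies these independences is the separate task handled afterwards via the three independence lemmas of \cref{sec:3_cond_ind} and the criterion \cref{thm:causalCompdSep}. The one step I would check most carefully is the past of $\eta_{i,j}$: because all the entry boxes share the maximal timing value $3$, each $\eta_{i,j}$ sees every other entry's output in its past, so one must confirm exactly which matrix entries survive after removing $R_i$, $C_j$, $T$, and $S_{i,j}$, and verify that this delineation matches the right-hand side appearing in relation \ref{lem:AH_orderedMarkov_iii}.
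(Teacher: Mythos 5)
Your proof is correct and coincides with the paper's treatment: the paper states this lemma without proof, precisely because it is the direct unwinding of the ordered Markov property that you carry out (same box data, same three pasts, same substitutions). The one point you flag for careful checking is indeed the only subtlety, and your resolution is the right one: since all boxes $\eta_{k,\ell}$ share the maximal timing value, the past of $\eta_{i,j}$ contains every other matrix entry, so the symbol $S^{\complsm{i},\complsm{j}}$ in \ref{lem:AH_orderedMarkov_iii} must be read as $S^{(\A\times\A)\setminus\{(i,j)\}}$ (all entries of the square other than $S_{i,j}$, including those sharing row $i$ or column $j$) rather than as $S^{\complsm{i}\times\complsm{j}}$ under a literal application of \cref{nota:ah} --- the reading that is also forced by how the paper translates \cref{lem:independence1} in the proof of \cref{prop:exch-orderedMarkovCond}.
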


By the Partition \cref{lem:partition}, Properties \ref{lem:AH_orderedMarkov_i} and \ref{lem:AH_orderedMarkov_ii} demand that all $R_{i}$ and $C_{j}$ must be conditionally independent given $T$. Condition \ref{lem:AH_orderedMarkov_iii} demands that the outputs $S_{i,j}$ can be generated independently of the rest when $R_{i}, C_{j}$ and $T$ are given.

\begin{proposition}\label{prop:exch-orderedMarkovCond}
	Let $\cC$ be a strict Markov category with conditionals.
	If $p \colon I \to X^{\N \times \N}$ in $\cC$ is row-column exchangeable, then it satisfies the ordered Markov property with respect to the $\varphi_{AH}$ from \cref{eq:AH_causal_structure} using the following assignments:
$$ \begin{array}{ccc}
 T' & \coloneqq & X^{\compl{\A}, \compl{A}}, \\
 (R_i)' & \coloneqq & X^{i, \compl{\A}}, \\
 (C_j)' & \coloneqq & X^{\compl{\A}, j}, \\
 (S_{i,j})' & \coloneqq & X^{i,j}.
\end{array}$$
\end{proposition}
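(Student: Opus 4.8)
The plan is to invoke \cref{lem:AH_orderedMarkov}, which reduces the ordered Markov property of $p$ with respect to $(\varphi_{AH}, \timing)$ to verifying its three conditional independence relations \ref{lem:AH_orderedMarkov_i}--\ref{lem:AH_orderedMarkov_iii}. Under the proposed assignment $T' = X^{\compl{\A},\compl{\A}}$, $(R_i)' = X^{i,\compl{\A}}$, $(C_j)' = X^{\compl{\A},j}$, and $(S_{i,j})' = X^{i,j}$, each of these becomes a conditional independence among marginals of $p$. Before checking them I would record that the assignment is consistent with the type of $p$: the assigned objects, ranged over all wires of $\varphi_{AH}$, tensor together to $X^{\A,\A} \otimes X^{\A,\compl{\A}} \otimes X^{\compl{\A},\A} \otimes X^{\compl{\A},\compl{\A}} \cong X^{\N,\N}$ by \cref{eq:array_decomp}, so that $p \colon I \to \Out(\varphi_{AH})'$ is well-typed and supplies the marginals referenced below. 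The probabilistic content will come entirely from the three lemmas of \cref{sec:3_cond_ind}, while the combination relies only on the semigraphoid properties (\cref{lem:semigraphoid}) and the Partition Lemma.

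For \cref{lem:AH_orderedMarkov}\ref{lem:AH_orderedMarkov_i}, the substitution gives $X^{i,\compl{\A}} \perp X^{\complsm{i},\compl{\A}}, X^{\compl{\A},\A} \mid X^{\compl{\A},\compl{\A}}$, which asserts that the row tail $X^{i,\compl{\A}}$ is independent of all the remaining row and column tails given the array tail. This is immediate from \cref{lem:independence3} (row and column independence): its mutual independence of all the $Y_k$ splits off a single factor $Y_i = X^{i,\compl{\A}}$ via \cref{cor:partition}, whose complement is exactly $X^{\complsm{i},\compl{\A}}, X^{\compl{\A},\A}$. Relation \cref{lem:AH_orderedMarkov}\ref{lem:AH_orderedMarkov_ii} is handled identically after exchanging the roles of rows and columns, singling out the column tail $Y_{n+j} = X^{\compl{\A},j}$ instead.

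The substantive case is the condition for the entry box $\eta_{i,j}$, namely \cref{lem:AH_orderedMarkov}\ref{lem:AH_orderedMarkov_iii}. Since $\timing(\eta_{i,j})$ is maximal, the past of $\eta_{i,j}$ is all of $\varphi_{AH}$, and under the assignment the condition reads $X^{i,j} \perp X^{\complsm{i},\A}, X^{i,\complsm{j}}, X^{\complsm{i},\compl{\A}}, X^{\compl{\A},\complsm{j}} \mid X^{i,\compl{\A}}, X^{\compl{\A},j}, X^{\compl{\A},\compl{\A}}$, where $X^{\complsm{i},\A}$ together with $X^{i,\complsm{j}}$ exhaust all entries of the square $\A \times \A$ other than $X^{i,j}$. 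I would build this from two earlier lemmas. \cref{lem:independence2} (single entry independence II) directly provides $X^{i,j} \perp X^{\complsm{i},\compl{\A}}, X^{\compl{\A},\complsm{j}} \mid X^{i,\compl{\A}}, X^{\compl{\A},j}, X^{\compl{\A},\compl{\A}}$, and \cref{lem:independence1} (single entry independence I), through \cref{cor:partition} and decomposition, provides $X^{i,j} \perp X^{\complsm{i},\A}, X^{i,\complsm{j}} \mid X^{\A,\compl{\A}}, X^{\compl{\A},\A}, X^{\compl{\A},\compl{\A}}$.

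The observation that unlocks the argument is that the conditioning set $X^{\A,\compl{\A}}, X^{\compl{\A},\A}, X^{\compl{\A},\compl{\A}}$ of the second relation is precisely the union of the conditioning set of the first relation with its independent part. Abbreviating $W \coloneqq X^{i,\compl{\A}}, X^{\compl{\A},j}, X^{\compl{\A},\compl{\A}}$ and $Z \coloneqq X^{\complsm{i},\compl{\A}}, X^{\compl{\A},\complsm{j}}$, the two relations are $X^{i,j} \perp Z \mid W$ and $X^{i,j} \perp X^{\complsm{i},\A}, X^{i,\complsm{j}} \mid Z, W$, so contraction (\cref{lem:semigraphoid}\ref{semi:contraction}) yields $X^{i,j} \perp Z, X^{\complsm{i},\A}, X^{i,\complsm{j}} \mid W$, which is exactly the desired condition for $\eta_{i,j}$. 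I expect no genuine obstacle beyond this bookkeeping: the three independence lemmas carry all the work, and the only care needed is to track which marginals each lemma delivers and to recognize the coincidence of conditioning sets that lets contraction glue the two single-entry relations together.
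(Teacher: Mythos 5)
Your proposal is correct and takes essentially the same route as the paper's own proof: conditions \ref{lem:AH_orderedMarkov_i} and \ref{lem:AH_orderedMarkov_ii} are read off from \cref{lem:independence3}, and condition \ref{lem:AH_orderedMarkov_iii} is obtained by combining \cref{lem:independence1} and \cref{lem:independence2} through the contraction property of \cref{lem:semigraphoid}, with exactly the same choice of conditioning sets. You merely spell out the substitutions, the type-checking via \cref{eq:array_decomp}, and the fact that the past of $\eta_{i,j}$ contains \emph{all} other entries $X^{\complsm{i},\A}, X^{i,\complsm{j}}$ (not only those avoiding row $i$ and column $j$), details the paper leaves implicit.
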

%These are exactly the ones that make \cref{eq:AH_causal_structure} match with \cref{eq:aldous_hoover1}.
\begin{proof}
We have to verify the conditional independence relations of \cref{lem:AH_orderedMarkov} for $p \colon I \to X^{\N,\N}$.
Indeed, \ref{lem:AH_orderedMarkov_i} and \ref{lem:AH_orderedMarkov_ii} follow immediately from \cref{lem:independence3} (and are similar to what was used in its proof). To prove \ref{lem:AH_orderedMarkov_iii},
we translate \cref{lem:independence1,lem:independence2} in our current notation. These respectively read as
\begin{equation}
	S_{i,j} \perp S^{\complsm{i},\complsm{j}} \ | \ R^{\A}, C^{\A}, T \qquad \text{and}\qquad S_{i,j} \perp R^{\complsm{i}}, C^{\complsm{j}} \ | \ R_i, C_j, T,
\end{equation}
for every $i,j \in \A$.
%Choosing $X \coloneqq S^{i,j}$, $Y \coloneqq \bigotimes_{i' \neq i, j' \neq j} S^{i',j'}$, $Z \coloneqq R^{\complsm{i}} \otimes C^{\complsm{j}}$, $W \coloneqq R^i \otimes C^j \otimes T$, 
An application of the contraction property from \cref{lem:semigraphoid}\ref{semi:contraction} ensures the desired conditional independence. %\ref{lem:AH_orderedMarkov_iii}.
\end{proof}

We are now able to prove the strong form of the Synthetic Aldous--Hoover Theorem.

\begin{proof}[Proof of \cref{thm:AldousHoover}]
	By the strictification theorem for Markov categories~\cite[Theorem~10.17]{fritz2019synthetic}, we can assume without loss of generality that $\cC$ is strict.
	In this case, the compatibility with the Aldous--Hoover causal model \eqref{eq:AH_causal_structure} is now immediate from \cref{thm:causalCompdSep} in combination with \cref{prop:exch-orderedMarkovCond}.

	The fact that the morphisms can be chosen independently of $i$ and $j$ is analogous to the argument in the proof of \cref{thm:AldousHoover} we gave in \cref{sec:mainProof}.
\end{proof}

\begin{remark}[Relation between the two proofs]
Our two proofs of the Aldous--Hoover Theorem use similar ingredients, since both rely on the three independence lemmas developed in \cref{sec:3_cond_ind}. However, there still are differences, which impact the proofs' clarity and expected generalizability:
\begin{itemize}
	\item The proof in \cref{sec:mainProof} is \emph{more direct} than the one of this section, by leveraging the three independence lemmas directly to construct the resulting Aldous--Hoover string diagram rather than proceeding through an additional step like \cref{prop:exch-orderedMarkovCond}.
	\item Conversely, the proof of this section adopts a \emph{more systematic framework}, which we expect to be more easily generalizable, such as to representation theorems for hierarchical exchangeability.
		% While the necessary independence relations in \cref{sec:3_cond_ind} are constructed from scratch, the ordered Markov properties explicitly specify the necessary and sufficient conditions to establish the Aldous–Hoover theorem.
\end{itemize}
One might argue that the ordered Markov property, as expressed in \cref{lem:AH_orderedMarkov}, could have been established explicitly in \cref{sec:aldous_hoover}, instead of the three independence lemmas in \cref{sec:3_cond_ind}. 
However, this approach would complicate and lengthen the proof of the Aldous--Hoover Theorem, as it would replicate the inductive strategy used in the proof of \cref{thm:causalCompdSep}.
\end{remark}

\newpage
\appendix

\section{Parametric Markov Categories}\label{sec:param}
	
	One powerful aspect of the category-theoretic approach to probability offered by Markov categories is the fact that many results about states immediately generalize to morphisms with arbitrary domain.
	The Synthetic de Finetti Theorem proven in \cite{fritz2021definetti} is a prime example of this phenomenon.
	One type of arguments that facilitates such generalizations is to instantiate the result for states in \newterm{parametric Markov categories} introduced in \cite{fritz2023representable} and proving that the relevant assumptions automatically carry over to these categories.
	
	For a Markov category $\cC$ and an object $W \in \cC$, the corresponding parametric Markov category $\cC_W$ is defined by taking the same objects as those of $\cC$ and using the hom-sets $\param{\cC_W (A,X)} \coloneqq \cC(A \otimes W, X)$.\footnotemark{}
	\footnotetext{We use the convention $A \otimes W$ instead of $W \otimes A$ as in \cite{fritz2023representable}, which makes no difference since Markov categories are \emph{symmetric} monoidal.}%
%	, since this makes diagrams like \eqref{eq:de_finetti_W_proof} more readable.}
	The composition and the monoidal product distribute the parameters $W$ via copying, while the copy and the delete in $\cC_W$ discard the additional parameter $W$.
	We refer to \cite{fritz2023representable} for full details.
	
	\begin{lemma}\label{lem:param_cauchy}
		If $\cC$ satisfies the Cauchy--Schwarz axiom, then for any object $W \in \cC$, the parametric Markov \mbox{category $\cC_W$} satisfies it as well.
	\end{lemma}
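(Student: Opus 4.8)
The plan is to unfold the definition of $\cC_W$ and reduce the Cauchy--Schwarz axiom in $\cC_W$ to a single instance of the Cauchy--Schwarz axiom in $\cC$. Fix objects $A, X, Y$ and morphisms $p \colon A \to X$ and $f, g \colon X \to Y$ of $\cC_W$; by definition of the hom-sets these are $\cC$-morphisms $\hat p \colon A \tensor W \to X$ and $\hat f, \hat g \colon X \tensor W \to Y$. The first step is to \emph{bundle the parameter into the output of $p$}: set $\tilde p \coloneqq (\hat p \tensor \id_W) \comp (\id_A \tensor \cop_W) \colon A \tensor W \to X \tensor W$, which runs $\hat p$ while retaining a pristine copy of $W$. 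Since $\hat f$ and $\hat g$ already have type $X \tensor W \to Y$ in $\cC$, the triple $(\tilde p, \hat f, \hat g)$ is exactly the data to which \cref{def:cauchy_schwarz} applies in $\cC$, with $A \tensor W$, $X \tensor W$, $Y$ playing the roles of $A$, $X$, $Y$.

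Next I would check that the antecedent of the implication in \eqref{eq:spat_causal} transfers. Composition in $\cC_W$ threads the parameter through by copying it, and the comonoid maps of $\cC_W$ act on the $X$-wire while discarding $W$. Unfolding the three inner-product morphisms built from $p$, $f$, $g$ in $\cC_W$ therefore yields $\cC$-morphisms $A \tensor W \to Y \tensor Y$ that copy the incoming $W$, feed one copy through $\hat p$ and one copy each to $\hat f$ and $\hat g$, while sharing a single $X$ emitted by $\hat p$. By coassociativity and cocommutativity of $\cop_W$, these coincide with the inner-product morphisms of $(\tilde p, \hat f, \hat g)$ in $\cC$: the copy of $W$ retained by $\tilde p$ is duplicated by $\cop_{X \tensor W}$ and delivered to $\hat f$ and $\hat g$, reproducing exactly the two parameter copies used in $\cC_W$. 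Hence the $\cC_W$-antecedent for $(p,f,g)$ is literally the $\cC$-antecedent for $(\tilde p, \hat f, \hat g)$, and the Cauchy--Schwarz axiom in $\cC$ delivers $\hat f \ase{\tilde p} \hat g$.

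Finally I would transport this almost sure equality back to $\cC_W$. Spelling out $f \ase{p} g$ in $\cC_W$ via \cite[Definition 13.1]{fritz2019synthetic} produces an equation of $\cC$-morphisms $A \tensor W \to X \tensor Y$ that retains one copy of the sampled $X$ (discarding $W$) while feeding the other $X$ together with a copy of $W$ into $\hat f$, respectively $\hat g$. This is obtained from the witnessing equation for $\hat f \ase{\tilde p} \hat g$ in $\cC$ — which retains the full $X \tensor W$ — simply by discarding the retained $W$, that is by post-composing both sides with $\id_X \tensor \discard_W \tensor \id_Y$. As post-composition preserves equality of morphisms, $\hat f \ase{\tilde p} \hat g$ implies $f \ase{p} g$ in $\cC_W$, which is the desired conclusion.

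The main obstacle is the bookkeeping of $W$ in the antecedent: one must ensure that the two evaluations of $f$ and $g$ inside the $\cC_W$ inner products use copies of the \emph{same} ambient parameter $W$ and the \emph{same} sample from $\hat p$, so that bundling $W$ into $\tilde p$ and duplicating via $\cop_{X \tensor W}$ recreates precisely this sharing pattern. Once these coassociativity and cocommutativity manipulations of $\cop_W$ are verified, the reduction to the axiom in $\cC$ is immediate; the only remaining delicacy is that $\cC_W$-almost sure equality corresponds to $\cC$-almost sure equality \emph{followed by discarding the bundled $W$}, and this asymmetry fortunately runs in the direction we need.
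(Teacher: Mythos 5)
Your proof is correct and takes essentially the same approach as the paper: the paper also bundles the parameter into the output of the first morphism (your $\tilde p$ is exactly the paper's ``dashed box''), identifies the $\cC_W$-antecedent with the $\cC$-antecedent for this bundled morphism, applies the axiom in $\cC$, and recovers the $\cC_W$-conclusion by marginalizing the retained $W$ output. The only difference is presentational — the paper compresses the copy-map bookkeeping into a single string diagram, while you spell it out explicitly.
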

		
	\begin{proof}
		Let $\param{q \colon A \to X}$, $\param{h_1\colon X \to Y}$ and $\param{h_2 \colon X \to Z}$ be arbitrary morphisms in $\cC_W$.
		As in \cite{fritz2023representable}, the blue coloring reminds us that these are morphisms in $\cC_W$, which by definition are morphisms in $\cC$ with an extra input $W$. 
		By definition of the composition in $\cC_W$, we have
		\begin{equation}
			\tikzfig{cs_parametric}
		\end{equation}
		Therefore, the antecedent of the Cauchy--Schwarz axiom in $\cC_W$ is translated into the antecedent of the Cauchy--Schwarz axiom in $\cC$ by replacing $q$ with the dashed box. 
		Upon marginalization of the extra $W$ output that we obtain by applying Implication \eqref{eq:spat_causal}, the consequent of the Cauchy--Schwarz axiom in $\cC_W$ follows.
	\end{proof}
	
	It has been shown earlier in \cite[Lemma 2.10]{fritz2023representable} and \cite[Lemma 5.5]{fritz2021definetti} that conditionals and Kolmogorov products carry over from $\cC$ to $\cC_W$.
	Therefore, the following holds. 
	\begin{corollary}\label{cor:param_assumptions}
		Whenever a Markov category $\cC$ satisfies \cref{ass:de_finetti}, then also the parametric Markov category $\cC_W$ satisfies \cref{ass:de_finetti}.
	\end{corollary}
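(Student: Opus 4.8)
The plan is to verify \cref{ass:de_finetti} for $\cC_W$ condition by condition, since the assumption is a conjunction of three independent requirements, each of which is known to be stable under passing to a parametric Markov category. Concretely, I would observe that the three preservation results needed are already in place: conditionals in $\cC_W$ follow from \cite[Lemma 2.10]{fritz2023representable}, countable Kolmogorov products in $\cC_W$ follow from \cite[Lemma 5.5]{fritz2021definetti}, and the Cauchy--Schwarz axiom for $\cC_W$ is exactly the content of the immediately preceding \cref{lem:param_cauchy}.

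First I would recall that, by hypothesis, $\cC$ satisfies all three items of \cref{ass:de_finetti}. Then I would invoke \cite[Lemma 2.10]{fritz2023representable} to transfer the existence of conditionals from $\cC$ to $\cC_W$, and \cite[Lemma 5.5]{fritz2021definetti} to transfer countable Kolmogorov products. Finally, I would apply \cref{lem:param_cauchy} to conclude that $\cC_W$ also satisfies the Cauchy--Schwarz axiom. Since these are precisely the three requirements of \cref{ass:de_finetti}, the corollary follows.

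The genuinely new input here is \cref{lem:param_cauchy}; the other two transfers are cited from prior work and were already announced in the sentence preceding the statement. Consequently there is no real obstacle to overcome: the proof is purely a matter of assembling the three preservation facts. The only point demanding a moment's care is bookkeeping{\,---\,}ensuring that the indexing and the hom-set convention $\cC_W(A,X) = \cC(A \otimes W, X)$ used in each cited lemma match the convention fixed at the start of this appendix{\,---\,}but since all three results are stated for the same parametric construction, no reconciliation is needed.
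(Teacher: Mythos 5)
Your proof is correct and matches the paper's own argument exactly: the paper likewise combines \cite[Lemma 2.10]{fritz2023representable} for conditionals, \cite[Lemma 5.5]{fritz2021definetti} for Kolmogorov products, and \cref{lem:param_cauchy} for the Cauchy--Schwarz axiom. Nothing further is needed.
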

	
	This provides a pathway for extending our results for states from the main text to theorems about general morphisms with non-trivial inputs.

\section{Further de Finetti Theorems}\label{sec:dF_further}

A de Finetti Theorem for morphisms with inputs has been proven in \cite{fritz2021definetti}.
Similarly, by using parametric Markov categories, it is straightforward to give such a version of the Synthetic de Finetti Theorem in the form of \cref{cor:definetti}.

\begin{theorem}[Synthetic de Finetti Theorem, parametric weak form]\label{thm:definetti_W}
	Let $\cC$ be a Markov category satisfying \cref{ass:de_finetti}.
	Then for every exchangeable morphism $p \colon W \to X^{\N}$, there exist an object $A \in \cC$ together with morphisms $q \colon W \to A$ and $f \colon A \to X$ such that we have
	\begin{equation}
		\tikzfig{de_finetti_standardplate_W}
	\end{equation}
\end{theorem}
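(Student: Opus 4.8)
The plan is to deduce this parametric statement from the weak form of the Synthetic de Finetti Theorem (\cref{cor:definetti}) by instantiating that result in the parametric Markov category $\cC_W$. The groundwork is already in place: by \cref{cor:param_assumptions}, $\cC_W$ satisfies \cref{ass:de_finetti} whenever $\cC$ does, so \cref{cor:definetti} is directly applicable in $\cC_W$. Thus no new analytic input is needed; the whole argument is a transfer along the definition of the parametric category.

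First I would reinterpret the datum $p \colon W \to X^{\N}$ in $\cC$ as a \emph{state} $\hat{p} \colon I \to X^{\N}$ in $\cC_W$, using the defining identity $\cC_W(I, X^{\N}) = \cC(I \otimes W, X^{\N}) = \cC(W, X^{\N})$. The next step is to check that exchangeability transfers cleanly between the two pictures. The permutation endomorphisms $X^{\sigma}$ are determined by the universal property of the Kolmogorov power $X^{\N}$, and by \cite[Lemma 5.5]{fritz2021definetti} this Kolmogorov power in $\cC_W$ coincides with the one in $\cC$; moreover the $X^{\sigma}$ remain deterministic there. Consequently the equation $X^{\sigma} \circ \hat{p} = \hat{p}$ in $\cC_W$ is literally the defining equation \eqref{eq:exchangeable_def} $X^{\sigma} \circ p = p$ in $\cC$, so $\hat{p}$ is exchangeable in $\cC_W$ exactly when $p$ is exchangeable in $\cC$.

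With exchangeability in hand, I would apply \cref{cor:definetti} to $\hat{p}$ inside $\cC_W$. This produces an object $A$, a state $\hat{q} \colon I \to A$, and a morphism $\hat{f} \colon A \to X$, all in $\cC_W$, such that $\hat{p}$ is the $\N$-indexed plate of $\hat{f}$ applied to $\hat{q}$, i.e.\ the plate form of \cref{eq:de_finetti_weak}. Translating back to $\cC$, the state $\hat{q}$ becomes the desired $q \colon W \to A$, while the copy morphisms and the composition of $\cC_W$ unfold in $\cC$ by duplicating the parameter $W$ and feeding a copy into $\hat{q}$ and into every instance of $\hat{f}$. Reading off the resulting string diagram then yields \cref{eq:de_finetti_weak} decorated with the extra input wire $W$, which is precisely the claimed diagram.

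The only genuinely delicate point is the bookkeeping in this last translation: unwinding the definitions of composition, monoidal product, and the infinite copy of $\cC_W$ back into $\cC$ requires tracking exactly where the parameter $W$ is duplicated as it passes through the infinite plate. In particular, $\hat{f} \colon A \to X$ in $\cC_W$ is a morphism $A \otimes W \to X$ in $\cC$, so a copy of $W$ is consumed at each entry of the plate. To land on the literal form $f \colon A \to X$ of the statement, one absorbs this residual parameter into the object, replacing $A$ by $A \otimes W$ and $q$ by $(q \otimes \id_W) \circ \cop_W$; this is consistent because the shared $W$-component is copied deterministically through the plate, so every instance of $f$ sees the same parameter value, exactly as in the $\cC_W$ picture. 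Apart from this formal unfolding, the argument introduces no new ideas beyond those already used for states in \cref{thm:definetti} and \cref{cor:definetti}.
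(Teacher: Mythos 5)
Your proposal is correct and follows essentially the same route as the paper: apply \cref{cor:definetti} in the parametric Markov category $\cC_W$ (justified by \cref{cor:param_assumptions}), then translate back to $\cC$ by absorbing the residual parameter into the object, i.e.\ taking $A \coloneqq A' \otimes W$ and $q \coloneqq (q' \otimes \id_W) \circ \cop_W$, exactly as in the paper's proof. Your extra care in checking that exchangeability and the Kolmogorov power transfer between $\cC$ and $\cC_W$ makes explicit what the paper leaves implicit, but introduces no new ideas.
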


\begin{proof}
	By \cref{cor:param_assumptions}, the Synthetic de Finetti Theorem (\cref{cor:definetti}) holds in $\cC_W$ with some object $\param{A'} \in \cC_W$. 
	Expressing this result in terms of morphisms in $\cC$ shows that there are morphisms $q' \colon W \to A'$ and $f \colon A' \otimes W \to X$ such that
	\begin{equation}\label{eq:de_finetti_W_proof}
		\tikzfig{de_finetti_W_proof}
	\end{equation}
	The statement now follows upon taking $A \coloneqq A' \otimes W$ and defining $q$ to be the dashed box.
\end{proof}

	%	The following two results generalize the above statement to include side information and inputs.
%	In \cite{fritz2021definetti}, the de Finetti Theorem for exchangeable morphisms (rather than states) was established using the parametrization construction \cite[Section 2.2]{fritz2023representable}.	
%	With the Cauchy--Schwarz among our assumptions, the direct proof is elementary thanks to \cref{prop:shift_perm_inv}.
%	Our de Finetti Theorem for morphisms follows as a direct consequence of the following strengthening of \cref{thm:definetti}.

The generality of \cref{prop:shift_perm_inv} also allows us to prove a variation on the de Finetti Theorem---previously not possible---which keeps track of correlations with other variables.
	This amounts to considering a \emph{dilation} of an exchangeable morphism $p \colon I \to X^\N$, which is a morphism $\pi \colon I \to X^\N \otimes Y$ marginalizing to $p$~\cite{fritz2022dilations}.
	If such $\pi$ is \newterm{exchangeable in the $X$ outputs},\footnote{By this we mean $(X^{\sigma} \otimes \id_Y)\comp \pi = \pi$ for all finite permutations $\sigma$.} which is a stronger condition than just the exchangeability of its marginal $p$, then again we obtain a de Finetti decomposition.
	
	\begin{theorem}[Synthetic de Finetti Theorem for dilations, strong form]
		\label{thm:dF_dilation}
		Let $\cC$ be a Markov category satisfying \cref{ass:de_finetti}. 
		Then every state $\pi \colon I \to X^{\N} \otimes Y$ exchangeable in the $X$ outputs 
		can be written in the form
		\begin{equation}\label{eq:exc_cond_iid_side}
			\tikzfig{exc_cond_iid_SideInfo}
		\end{equation}
		for every $n \in \N$.
	\end{theorem}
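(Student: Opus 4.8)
The plan is to mirror the proof of the strong de Finetti Theorem (\cref{thm:definetti}), with the single output $X$ of the tail conditional now regenerated from a conditioning context that also retains the side variable $Y$. Concretely, I would first define a \emph{dilated tail conditional} $\pi_{|\tail}\colon X^{\N}\otimes Y\to X$, namely a conditional of the first $X$-output of $\pi$ given all the remaining $X$-outputs together with $Y$ (here the remaining outputs are reindexed as a copy of $X^{\N}$ via the shift). Since $\pi$ is exchangeable in its $X$-outputs, permuting the tail outputs before conditioning does not change this conditional, so $\pi_{|\tail}$ is invariant under all finite permutations of its $X^{\N}$-argument. The goal is then to promote this permutation invariance to shift invariance and to read off the claimed decomposition \eqref{eq:exc_cond_iid_side}.

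The key step is to apply \cref{prop:shift_perm_inv} to deduce that $\pi_{|\tail}$ is also invariant under the shift $X^{s}$ on its $X^{\N}$-argument. I would carry this out in the parametric Markov category $\cC_{Y}$, which again satisfies \cref{ass:de_finetti} by \cref{cor:param_assumptions}: there $\pi_{|\tail}$ becomes a morphism $X^{\N}\to X$ with $Y$ as a parameter, and the relevant exchangeable state is the conditional of the $X$-tail of $\pi$ given $Y$. With shift invariance of $\pi_{|\tail}$ in hand, the assembly is exactly as in \cref{thm:definetti}: using shift invariance of $\pi_{|\tail}$ together with the shift invariance of $\pi$ (which follows from its exchangeability in the $X$-outputs), one rewrites the first $n$ outputs as $n$ independent applications of $\pi_{|\tail}$ to the common context $X^{\compl{\A}}\otimes Y$, while the wire $Y$ is simply carried along and reappears as the retained output. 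This yields precisely \eqref{eq:exc_cond_iid_side}. An alternative, equivalent phrasing is to invoke \cref{cor:definetti} inside $\cC_{Y}$ and translate the result back to $\cC$, just as in the proof of \cref{thm:definetti_W}, except that here $Y$ plays the role of an output rather than an input.

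The main obstacle, and the reason this variant was \emph{previously not possible}, is that the conditional of the $X$-tail given $Y$ is in general only $\as{}$-exchangeable rather than exactly exchangeable: the identity $(X^{\sigma}\otimes\id_{Y})\comp\pi=\pi$ disintegrates only to an almost sure permutation invariance of the conditional, and synthetically one cannot repair this on a null set. This is exactly where the generality of \cref{prop:shift_perm_inv} is essential---unlike the earlier deterministic version, it requires neither determinism of $\pi_{|\tail}$ nor strict exchangeability, so it still converts the (almost sure) permutation invariance into the (almost sure) shift invariance needed for the assembly. Care must also be taken that all equalities are interpreted in the appropriate almost sure sense throughout, and that the reindexing identifying the remaining outputs with $X^{\N}$ is applied consistently; but these are bookkeeping matters once the shift invariance of $\pi_{|\tail}$ is secured.
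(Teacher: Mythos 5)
There is a genuine gap, in fact two related ones, and both stem from conditioning in the wrong direction. First, your key step misapplies \cref{prop:shift_perm_inv}. That proposition relaxes the determinism requirement on the morphism $f$ (and only asks for \emph{almost sure} permutation invariance of $f$), but it still requires the reference state $p$ to be \emph{exactly} exchangeable; your claim that it ``requires neither determinism of $\pi_{|\tail}$ nor strict exchangeability'' misreads its hypotheses. In your setup inside $\cC_Y$, the reference state would have to be the conditional of $X^{\N}$ given $Y$, which{\,---\,}as you yourself observe{\,---\,}is only almost surely exchangeable (with respect to the $Y$-marginal of $\pi$), not exchangeable on the nose. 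So \cref{prop:shift_perm_inv} does not apply, and there is no easy repair within your scheme: the result that handles \as{}-exchangeable morphisms (\cref{thm:dF_morphism}) is itself deduced from \cref{thm:dF_dilation} in the paper, so invoking anything of that kind here would be circular.

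Second, even if the shift invariance of your $\pi_{|\tail} \colon X^{\N} \otimes Y \to X$ were secured, the diagram you assemble is not \cref{eq:exc_cond_iid_side}. Your construction generates each $X^i$, $i \in \A$, from the joint context $X^{\compl{\A}} \otimes Y$, i.e.\ it only expresses that the first $n$ outputs are conditionally i.i.d.\ given the tail \emph{together with} $Y$. The theorem's decomposition is stronger: $Y$ is produced from the tail $X^{\compl{\A}}$ by a morphism, and the $X^i$ are produced from the tail alone, so that in particular $X^{\A} \perp Y \mid X^{\compl{\A}}$ holds. This conditional independence is the actual content of the dilation statement, and it cannot follow from invariance of $\pi_{|\tail}$ in its $X^{\N}$-argument, since that says nothing about its dependence on the $Y$-input; structurally, parametrizing by $Y$ treats $Y$ as an always-available input, which is exactly what the theorem must rule out. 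The paper's proof conditions the other way around: it takes $h \colon X^{\N} \to Y$, a conditional of $\pi$ given $X^{\N}$. Then the reference state is the $X^{\N}$-marginal $p$, which \emph{is} exactly exchangeable, so \cref{prop:shift_perm_inv} applies directly in $\cC$ (no parametric category is needed); $h$ is \as{p} permutation invariant, hence \as{p} shift invariant, hence reads only the tail, and combining this with \cref{thm:definetti} applied to $p$ yields \cref{eq:exc_cond_iid_side} in one stroke, conditional independence included. Choosing which variable to condition on which is the decisive point that your proposal gets backwards.
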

	%Note that here $\pi$ stands for an arbitrary dilation of the exchangeable state $p$ \cite{fritz2022dilations}.
	\begin{proof}
%		The argument is analogous to the proof of \cite[Lemma 5.4]{fritz2021definetti}, which is why we omit some of the details.
		Let $h \colon X^{\N} \to Y$ denote a conditional of $\pi$ given $X^\N$, which by definition means 
		\begin{equation}\label{eq:cond_SideInfo}
			\tikzfig{cond_SideInfo}
		\end{equation}
		Since $h$ is $\as{p}$ permutation invariant (i.e.~$h \ase{p} h \comp X^{\sigma}$ for every finite permutation $\sigma$), it is also $\as{p}$ shift invariant by \cref{prop:shift_perm_inv}.
		Using the shift invariance $n$ times and decomposing $p$ according to \cref{eq:exc_cond_iid} gives
		\begin{equation}\label{eq:exc_cond_iid_SideInfo_proof}
			\tikzfig{exc_cond_iid_SideInfo_proof}
		\end{equation}
		Combining \cref{eq:cond_SideInfo,eq:exc_cond_iid_SideInfo_proof} then yields the desired statement.
	\end{proof}	
		
	We call a morphism $p \colon A \to X^\N$ \newterm{\as{$m$} exchangeable}, where $m \colon I \to A$ is a state, if exchangeability in the form of \cref{eq:exchangeable_def} holds up to $\ase{m}$. 

	\begin{corollary}[Synthetic de Finetti Theorem, strong almost sure version]
		\label{thm:dF_morphism}
		Let $\cC$ be a Markov category satisfying \cref{ass:de_finetti} and $m \colon I \to A$ any state.
		Then for every \as{$m$} exchangeable morphism $p \colon A \to X^{\N}$, we have
		\begin{equation}\label{eq:exc_cond_iid_2}
				\tikzfig{exc_cond_iid_2}
		\end{equation}
		for every $n \in \N$, where $q \coloneqq p \comp m$.
	\end{corollary}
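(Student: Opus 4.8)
The plan is to derive this from the dilational de Finetti Theorem (\cref{thm:dF_dilation}) by promoting the morphism $p$ to a state with the help of $m$, while retaining a copy of the $A$-input as side information. Explicitly, I would form
\begin{equation*}
	\pi \coloneqq (p \otimes \id_A)\comp \cop_A \comp m \colon I \to X^{\N}\otimes A,
\end{equation*}
which samples $a$ from $m$, duplicates it, feeds one copy into $p$ and keeps the other as an extra $A$-output. By construction the $X^{\N}$-marginal of $\pi$ is exactly $q = p \comp m$, its $A$-marginal is $m$, and $\pi$ displays $p$ as a conditional of itself given the $A$-output.

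First I would verify that $\pi$ is exchangeable in the $X$ outputs, namely $(X^{\sigma}\otimes \id_A)\comp \pi = \pi$ for every finite permutation $\sigma$. This is merely an unfolding of the hypothesis: by the definition of almost sure equality \cite[Definition 13.1]{fritz2019synthetic}, the \as{$m$} exchangeability $X^{\sigma}\comp p \ase{m} p$ says precisely that
\begin{equation*}
	\bigl((X^{\sigma}\comp p)\otimes \id_A\bigr)\comp\cop_A\comp m = (p\otimes\id_A)\comp\cop_A\comp m,
\end{equation*}
and the left-hand side is $(X^{\sigma}\otimes\id_A)\comp\pi$ whereas the right-hand side is $\pi$. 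In particular $q$ is exchangeable. I would then apply \cref{thm:dF_dilation} with $Y\coloneqq A$ to obtain the decomposition \cref{eq:exc_cond_iid_side} of $\pi$, in which the first $n$ entries are produced from the tail $X^{\compl{\A}}$ by the plate of tail conditionals $q_{|\tail}$, and the extra output $A$ is generated from that same tail.

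It remains to read this off as a statement about $p$. Since $\pi$ exhibits $p$ as a conditional given the $A$-output, and conditionals are unique up to $\ase{m}$, I would take the conditional given $A$ on both sides of the dilational decomposition. On the decomposition side, the plate generating $X^{\A}$ is conditionally independent of $A$ given the tail, so conditioning on $A$ leaves the $q_{|\tail}$-plate untouched and only replaces the tail-generating state by the tail-marginal of $p$ itself; matching the two conditionals then yields \cref{eq:exc_cond_iid_2}.

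The main obstacle I anticipate is precisely this final passage from the state-level decomposition of $\pi$ back to the morphism-level identity for $p$: one must argue carefully that the right-hand side of \cref{eq:exc_cond_iid_2} is exactly the conditional of $\pi$ given $A$ and invoke $\ase{m}$-uniqueness of conditionals, rather than settling for an equality of states. All the remaining manipulations are formal rearrangements of copies, deletions and marginals.
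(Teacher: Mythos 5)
Your proposal is correct and is essentially the paper's own proof: the paper likewise applies \cref{thm:dF_dilation} to the state $(p \otimes \id_A)\comp\cop_A\comp m$, observing that its exchangeability in the $X$ outputs is exactly the \as{$m$} exchangeability of $p$. The only difference is that your final conditioning-on-$A$ step is superfluous: since the $A$-wire is retained on both sides, the state-level decomposition produced by \cref{thm:dF_dilation} is, by the very definition of $\ase{m}$ equality \cite[Definition 13.1]{fritz2019synthetic}, already the claimed \cref{eq:exc_cond_iid_2}, so no appeal to almost-sure uniqueness of conditionals is needed.
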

	\begin{proof}
		This statement follows immediately upon applying \cref{thm:dF_dilation} to the morphism
		\begin{equation}
			\tikzfig{f_as}
		\end{equation}
		Indeed this morphism is exchangeable in the $X$ outputs because $p$ is $\as{m}$ exchangeable.
	\end{proof}	
%	
%	\togo{The following proof is I think false, because the $p_{|tail}$ may a priori depend on the prior $m$.
%	The statement may be true (not sure about the precise assumptions needed), but one has to show the conditional independence $X^{\A} \perp Y \mid X^{\compl{\A}}$ which is not proven here.}
%	\begin{corollary}
%		Let $\cC$ be a Markov category satisfying \cref{ass:de_finetti} and being in addition point-separable (see \cite[Definition 2.2.3]{fritz2023absolute}). Then every exchangeable morphism $p \colon Y \to X^{\N}$ satisfies
%		\begin{equation}\label{eq:exc_cond_iid_morphism}
%			\tikzfig{exc_cond_iid_3}
%		\end{equation}
%	\end{corollary}
%	\begin{proof}
%	For every state $m \colon I \to Y$, $p$ is \as{$m$} exchangeable and therefore satisfies \cref{eq:exc_cond_iid}. %$q$-almost surely.
%	Since this holds for every state $m$, point-separability implies the desired statement.
%	\end{proof}
	
%	Note that the analogue of the de Finetti Theorem in the form of~\cite[Theorem~4.4]{fritz2021definetti}, with the assumption of \as{}-compatible representability replaced by the Cauchy--Schwarz axiom and representability, follows immediately.
%
%	\tob{perhaps state it? Or we can only state the relative version in the next subsection}
%	\anto{I don't know if this sentence is really worth saying: we still need representability (with no \as{}-compatibility requirement?), and it basically follows from \cite[(45)]{fritz2021definetti}.
%
%	One important question though: is the Cauchy--Schwarz axiom behaving nicely with the parametric Markov categories?}

\section{Further Aldous--Hoover Theorems}
\label{sec:AH_further}

	We now consider versions of the Synthetic Aldous--Hoover Theorem which are analogous to those of the de Finetti Theorem in the previous section.
	We believe that these results are new even for $\BorelStoch$.

	As before, the weak form of the theorem is amenable to generalization to morphisms with additional inputs via the parametrization construction.
	On the other hand, the strong form require a consideration of invariance under permutations of outputs almost surely.

%This is apparently different from~\cite[Theorem~4.4]{fritz2021definetti}, but in fact, the sampling morphism hides the use of this additional input, as highlighted in Equation~(46) there.
%It is for these reasons that we have highlighted the dilational and almost sure versions in the main text, where this problem does not occur, while we relegated the parametric version to this appendix.
	
	\begin{theorem}[Synthetic Aldous--Hoover Theorem, parametric weak form]\label{thm:AldousHooverParametricWeak}
		Let $\cC$ be a Markov category satisfying \cref{ass:de_finetti}.
		Then for every row-column exchangeable morphism $p \colon W \to X^{\N \times \N}$, there exist objects $A, B, C \in \cC$ and morphisms
		\begin{equation}
			q \colon W \to A, \qquad f \colon A\to B, \qquad g \colon A \to C, \qquad h \colon B\otimes A \otimes C \longrightarrow X
		\end{equation}
		such that
		\begin{equation}
			\tikzfig{aldous_hoover_standardplate_W}
		\end{equation}
	\end{theorem}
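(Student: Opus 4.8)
The plan is to mirror the proof of the parametric weak de Finetti theorem (\cref{thm:definetti_W}), replacing the appeal to the non-parametric weak de Finetti theorem with the non-parametric weak Aldous--Hoover theorem (\cref{thm:AldousHooverWeak}). The entire content is to pass to the parametric Markov category $\cC_W$, apply the already-established state version there, and then translate the resulting decomposition back into $\cC$.

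First I would invoke \cref{cor:param_assumptions} to conclude that $\cC_W$ again satisfies \cref{ass:de_finetti}, so that \cref{thm:AldousHooverWeak} is available in $\cC_W$. Next I would observe that a morphism $p \colon W \to X^{\N \times \N}$ in $\cC$ is precisely a state $p \colon I \to X^{\N \times \N}$ in $\cC_W$, and that the two notions of row-column exchangeability coincide. Indeed, the permutation morphism $X^{\sigma \times \id}$ is deterministic, so composing with it in $\cC_W$ merely discards the redundant parameter $W$; hence the exchangeability equation of \cref{eq:rce_condition} interpreted in $\cC_W$ reduces exactly to the same equation in $\cC$.

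Applying \cref{thm:AldousHooverWeak} in $\cC_W$ then yields objects $A', B', C'$ together with morphisms $q' \colon I \to A'$, $f' \colon A' \to B'$, $g' \colon A' \to C'$, and $h' \colon B' \otimes A' \otimes C' \to X$ in $\cC_W$ realizing the Aldous--Hoover decomposition. Unfolding these as morphisms of $\cC$, each acquiring an extra input wire $W$, I would set $A \coloneqq A' \otimes W$, let $q \colon W \to A$ be the morphism that applies $q'$ while copying its parameter $W$ into the second factor (the ``dashed box'', exactly as in \cref{eq:de_finetti_W_proof}), and take $B \coloneqq B'$, $C \coloneqq C'$, $f \coloneqq f'$, $g \coloneqq g'$, and $h \coloneqq h'$ after reassociating inputs. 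Bundling $W$ into $A$ guarantees that the parameter remains available to each of $f$, $g$, and $h$, which is precisely what the translated decomposition requires.

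The only point needing care, and the one I would flag as the main (albeit mild) obstacle, is the bookkeeping of how the parameter $W$ is distributed by the composition and monoidal product of $\cC_W$ once the diagram is unfolded into $\cC$: one must check that copying $W$ into $A$ and sharing $A$ across the common, row, column, and cell morphisms reproduces exactly the parameter-copying behaviour built into $\cC_W$. This is routine given the definition of $\cC_W$ and identical in spirit to the de Finetti case, so no genuinely new difficulty arises.
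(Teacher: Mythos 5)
Your proof is correct and takes essentially the same route as the paper: invoke \cref{cor:param_assumptions} so that $\cC_W$ satisfies \cref{ass:de_finetti}, apply \cref{thm:AldousHooverWeak} to $p$ viewed as a row-column exchangeable state in $\cC_W$, and translate back to $\cC$ by bundling the parameter into the domain object of the decomposition. The only (immaterial) difference is that the paper sets $A \coloneqq A' \otimes W$, $B \coloneqq B' \otimes W$, $C \coloneqq C' \otimes W$, whereas you absorb $W$ into $A$ alone; both choices reproduce the unfolded $\cC_W$ diagram, since $h$ receives $A$ directly and the comonoid laws make the two ways of distributing copies of $W$ agree.
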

	The proof is analogous to the one of \cref{thm:definetti_W}, so we limit ourselves to a sketch.
	\begin{proof}
	
		By \cref{cor:param_assumptions}, \cref{thm:AldousHooverWeak} holds in the parametric Markov category $\cC_W$ with suitable objects $\param{A'},\param{B'},\param{C'} \in \cC_W$. 
		The statement is now a consequence of translating the result back to $\cC$ and choosing $A\coloneqq A' \otimes W$, $B\coloneqq B' \otimes W$ and $C \coloneqq C' \otimes W$. 
	\end{proof}

	Next, we prove a version of the strong form of our Synthetic Aldous--Hoover Theorem for dilations, which is analogous to \cref{thm:dF_dilation}.
	
	\begin{theorem}[Synthetic Aldous--Hoover theorem for dilations, strong form]
	Let $\cC$ be a Markov category satisfying \cref{ass:de_finetti}. 
	Then every state $\pi \colon I \to X^{\N \times \N} \otimes Y$ which is row-column exchangeable in its $X$ outputs can be written in the form
	\begin{equation}
		\tikzfig{aldous_hoover_dilation}	
	\end{equation}	
	for every $n \in \N$.
	\end{theorem}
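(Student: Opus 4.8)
The plan is to mirror the proof of \cref{thm:dF_dilation}, replacing the role of the Synthetic de Finetti Theorem by the strong Aldous--Hoover Theorem (\cref{thm:AldousHoover}) and the single shift invariance by a two-dimensional (row \emph{and} column) shift invariance. First I would let $p \colon I \to X^{\N \times \N}$ be the marginal of $\pi$ on the $X$-outputs, which is then row-column exchangeable, and choose a conditional $h \colon X^{\N \times \N} \to Y$ of $\pi$ given $X^{\N \times \N}$, so that
\[
    \pi \;=\; (\id_{X^{\N \times \N}} \otimes h) \comp \cop_{X^{\N \times \N}} \comp p .
\]
Because $\pi$ is row-column exchangeable in its $X$-outputs, the essential uniqueness of conditionals forces $h$ to be $\as{p}$ invariant under every finite row permutation and every finite column permutation, exactly as the permutation invariance of the conditional is obtained in the proof of \cref{thm:dF_dilation}.

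The key new step is to upgrade this permutation invariance to shift invariance in each of the two directions. To treat the rows I would use the grouping isomorphism $X^{\N \times \N} \cong (X^{\N})^{\N}$ that regards the $i$-th factor as the whole $i$-th row $X^{i,\N}$; this target is again a Kolmogorov power since Kolmogorov products compose. Under this identification, row permutations of $X^{\N \times \N}$ become ordinary permutations of the sequence $(X^{\N})^{\N}$, the marginal $p$ becomes exchangeable, and $h$ becomes an ordinary morphism out of $(X^{\N})^{\N}$ that is $\as{p}$ permutation invariant. Hence \cref{prop:shift_perm_inv} applies and yields $h \ase{p} h \comp X^{s}$ for the successor endomorphism $X^s$ of $(X^{\N})^{\N}$, i.e.\ invariance under the row shift. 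Grouping by columns and repeating the argument gives invariance under the column shift.

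With both shift invariances in hand, I would note that the row shift and the column shift commute and that $p$ is invariant under each (so that the $\as{p}$ equalities chain, just as in \cref{thm:dF_dilation}). Applying the row shift $n$ times and the column shift $n$ times discards the first $n$ rows and the first $n$ columns, showing that $h$ is $\as{p}$ equal to a morphism $\widetilde{h} \colon X^{\compl{\A}, \compl{\A}} \to Y$ depending only on the array tail. Finally I would invoke \cref{thm:AldousHoover} to decompose $p$ in the strong form of \cref{eq:aldous_hoover1}, in which the array tail $X^{\compl{\A}, \compl{\A}}$ is produced as an explicit intermediate output; feeding that tail into $\widetilde{h}$ to generate $Y$ and recombining via the defining equation of the conditional yields the asserted decomposition for every $n \in \N$.

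I expect the main obstacle to be the bookkeeping around the grouping isomorphism $X^{\N \times \N} \cong (X^{\N})^{\N}$: one must check that ``exchangeable in the $X$-outputs'' transfers cleanly to exchangeability of the regrouped sequence (separately for rows and for columns), and that the two shifts, applied $n$ times each, cooperate so that $h$ genuinely factors through $X^{\compl{\A}, \compl{\A}}$ rather than through a larger tail. A secondary technical point, also present in \cref{thm:dF_dilation}, is verifying that the array tail produced by the Aldous--Hoover decomposition of $p$ is the same object on which $\widetilde{h}$ depends, so that the two pieces glue into a single string diagram of the claimed shape.
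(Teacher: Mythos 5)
Your proposal is correct and follows essentially the same route as the paper's own proof: take a conditional of $\pi$ given $X^{\N \times \N}$, observe it is $\as{p}$ permutation invariant in both axes, upgrade to shift invariance in each axis via \cref{prop:shift_perm_inv}, and then combine the $n$-fold shifts with the strong decomposition of $p$ from \cref{thm:AldousHoover}. Your explicit treatment of the grouping isomorphism $X^{\N \times \N} \cong (X^{\N})^{\N}$ fills in a detail the paper leaves implicit when it says ``applying \cref{prop:shift_perm_inv} to each axis,'' but it is the same argument.
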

	\begin{proof}
		The proof is analogous to the proof of \cref{thm:dF_dilation}.
		We merely need to extend the argument to two dimensions.
		To give a sketch, let $\pi_{|X^{\N,\N}} \colon X^{\N, \N} \to Y$ be a conditional of $\pi$ given $X^{\N, \N}$ and $p \coloneqq  (\id \otimes \discard_Y) \circ \pi$.
		Then $\pi_{|X^{\N,\N}}$ is $\as{p}$ permutation invariant with respect to both axes.
		Thus by applying \cref{prop:shift_perm_inv} to each axis, we conclude that it is also $\as{p}$ shift invariant in each axis.
		Using this shift invariance $n$ times for each axis and decomposing $p$ according to \cref{eq:aldous_hoover1} establishes the statement.
	\end{proof}
	
	For $m \colon I \to A$ a state, let us call a morphism $p \colon A \to X^{\N\times \N}$ \newterm{\as{$\bm{m}$} row-column exchangeable} if row-column exchangeability in the form of \cref{eq:rce_condition} holds up to $\as{m}$ equality.
		
	\begin{corollary}[Synthetic Aldous--Hoover Theorem, strong almost sure version]
		\label{cor:aldous_hoover_morphism}
		Let $\cC$ be a Markov category satisfying \cref{ass:de_finetti} and $m \colon I \to A$ any state.
		Then for every \as{$m$} row-column exchangeable morphism $p \colon A \to X^{\N \times \N}$, we have
		\begin{equation}\label{eq:almost_sure_aldous_hoover}
				\tikzfig{aldous_hoover_almost_sure}
		\end{equation}
	\end{corollary}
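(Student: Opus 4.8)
The plan is to mirror the proof of \cref{thm:dF_morphism} one dimension higher, replacing the appeal to the de Finetti theorem for dilations (\cref{thm:dF_dilation}) by the Synthetic Aldous--Hoover theorem for dilations proven just above. The conceptual point is identical: retaining a copy of the parameter as side information upgrades the \as{m} equality hidden in the exchangeability hypothesis to a genuine, on-the-nose equality.

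First I would form the dilation $\pi \colon I \to X^{\N \times \N} \otimes A$ defined by
\[
	\pi \coloneqq (p \otimes \id_A) \circ \cop_A \circ m,
\]
so that the side-information object is $Y \coloneqq A$. Marginalizing it away gives $(\id \otimes \discard_A) \circ \pi = p \circ m$, which is precisely the state against which the \as{m} row-column exchangeability of $p$ is measured.

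The crucial step is to check that $\pi$ is row-column exchangeable in its $X$ outputs, i.e.\ that applying the permutation of \cref{eq:rce_condition} to the $X^{\N \times \N}$ factor while leaving the retained copy of $A$ untouched returns $\pi$. Writing $\Phi_\sigma \colon X^{\N \times \N} \to X^{\N \times \N}$ for that permutation action, one has
\[
	(\Phi_\sigma \otimes \id_A) \circ \pi = \bigl( (\Phi_\sigma \circ p) \otimes \id_A \bigr) \circ \cop_A \circ m,
\]
and this equals $\pi$ if and only if $\Phi_\sigma \circ p \ase{m} p$, by the copy-and-test characterization of almost sure equality \cite[Definition~13.1]{fritz2019synthetic}. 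But $\Phi_\sigma \circ p \ase{m} p$ for every finite permutation $\sigma$ is exactly the assumption that $p$ is \as{m} row-column exchangeable.

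With $\pi$ thus exchangeable in its $X$ outputs, the Synthetic Aldous--Hoover theorem for dilations applies and decomposes $\pi$; specializing the side output $Y = A$ and marginalizing it where appropriate yields the claimed \cref{eq:almost_sure_aldous_hoover}. I expect the only real subtlety to be the bookkeeping in the displayed equivalence above{\,---\,}namely recognizing that permuting the $X$-outputs of the dilation and testing against the retained copy of the input is literally the defining condition of \as{m} equality. Once that identification is in place, the dilational theorem carries out all of the remaining structural work, exactly as in the de Finetti case.
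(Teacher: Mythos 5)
Your proposal is correct and is essentially the paper's own argument: the paper proves this corollary by declaring it ``identical to the proof of \cref{thm:dF_morphism}'', which is exactly your construction{\,---\,}form the dilation $(p \otimes \id_A) \circ \cop_A \circ m$, observe that its row-column exchangeability in the $X$ outputs is precisely the \as{$m$} row-column exchangeability of $p$ unfolded via the copy-and-test definition of almost sure equality, and then apply the dilational Aldous--Hoover theorem. Your explicit verification of the equivalence between the dilation's exchangeability and the \as{$m$} hypothesis is the one detail the paper leaves implicit, and it is correct.
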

	\begin{proof}
	Identical to the proof of \cref{thm:dF_morphism}.
	\end{proof}		

\section{Omnirandomness}\label{sec:omnirandom}

Here, we introduce a strengthening of the notion of randomness pushback~\cite[Definition~11.19]{fritz2019synthetic}.
We use this in the main text in order to derive the Aldous--Hoover Theorem in its most commonly known form.

\begin{definition}
	\label{def:omnirandom}
	A state $r \colon I \to R$ in a Markov category is \newterm{omnirandom} if for every morphism $f \colon X \to Y$, there is a deterministic morphism $g \colon R \otimes X \to Y$ such that
	\begin{equation}
		\label{eq:omnirandom}
		\tikzfig{omnirandom}
	\end{equation}
\end{definition}

The following instance of this is essentially the \emph{noise outsorcing lemma}, also known as \emph{transfer}~\cite[Theorem~5.10]{kallenberg1997foundations}.
It is usually stated for joint distributions rather than Markov kernels, but we find the kernel version to be more intuitive.

\begin{lemma}
	\label{lem:omnirandom}
	In $\BorelStoch$, the Lebesgue measure $\lambda \colon I \to [0,1]$ is omnirandom.
\end{lemma}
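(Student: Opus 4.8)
The plan is to unpack \cref{def:omnirandom} for the state $r = \lambda$ and to recognize the resulting condition as the noise outsourcing (transfer) lemma. Concretely, in $\BorelStoch$ the diagram \eqref{eq:omnirandom} asks that for every Markov kernel $f \colon X \to Y$ between standard Borel spaces there exist a measurable function $g \colon [0,1] \times X \to Y$ such that
\[
	f(B \mid x) = \lambda\bigl(\{ u \in [0,1] : g(u,x) \in B \}\bigr)
\]
for every Borel $B \subseteq Y$ and every $x \in X$; that is, the map $g(\,\cdot\,, x)$ pushes $\lambda$ forward to $f(\,\cdot \mid x)$. Here determinism of $g$ in $\BorelStoch$ is exactly measurability of an ordinary function, so the content of the lemma is a \emph{measurable} choice of noise representation, uniform in $x$.

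First I would reduce to the case $Y = \R$. By Kuratowski's isomorphism theorem every standard Borel space is Borel isomorphic either to a countable discrete space or to $\R$; precomposing and postcomposing $g$ with such an isomorphism and its measurable inverse lets us assume $Y = \R$ without loss of generality, the countable case being a trivial sub-case handled by the same construction after fixing an enumeration. For $Y = \R$ I would then use the conditional quantile (inverse-CDF) construction: for each $x$ put $F_x(t) \coloneqq f\bigl((-\infty, t] \mid x\bigr)$, the cumulative distribution function of $f(\,\cdot \mid x)$, and define
\[
	g(u, x) \coloneqq \inf\{ t \in \R : F_x(t) \geq u \}.
\]
The classical inverse-transform identity gives that if $U \sim \lambda$ then $g(U,x)$ has law $f(\,\cdot \mid x)$, which is precisely the required pushforward.

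The main obstacle is the joint measurability of $g$ in $(u,x)$. This follows once one knows that $(x,t) \mapsto F_x(t)$ is jointly measurable: measurability in $x$ for each fixed $t$ holds because $f$ is a Markov kernel, and joint measurability then follows from monotonicity and right-continuity in $t$ by approximating $F_x(t)$ from above along rational thresholds. The generalized inverse inherits joint measurability by the same monotone-plus-measurable argument, via the Galois-type identity $\{(u,x) : g(u,x) > t\} = \{(u,x) : u > F_x(t)\}$, which expresses the relevant superlevel sets in terms of the measurable function $F$. Rather than reproving these standard one-dimensional facts in detail, I would cite the transfer theorem \cite[Theorem 5.10]{kallenberg1997foundations}: its usual statement for joint laws specializes to the present kernel formulation by reading the construction pointwise in $x$, and the argument above shows the pointwise constructions can be assembled measurably. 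This completes the verification that $\lambda$ is omnirandom.
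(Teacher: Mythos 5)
Your proposal is correct and follows essentially the same route as the paper: reduce to the real line via Kuratowski's theorem and then use the conditional quantile (inverse-CDF) construction $g(u,x) = \inf\Set{t \given f\bigl((-\infty,t] \mid x\bigr) \geq u}$, verified through its monotonicity in $u$. The only differences are cosmetic{\,---\,}you transport the problem through a Borel isomorphism onto $\R$ where the paper instead embeds $Y$ as a closed subset of $\R$ and checks that $g$ lands in $Y$, and you spell out the joint measurability of $g$ in $(u,x)$, which the paper's proof leaves implicit.
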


We provide a proof for the sake of completeness.
\begin{proof}
	For a given $f \colon X \to Y$, we can assume without loss of generality that its codomain is a closed subset $Y \subseteq \R$ by Kuratowski's theorem.\footnote{Since $Y$ is either finite or isomorphic to $\N$, in which case the statement is clear, or is isomorphic to $\R$ itself.}
	Then define a  measurable function $g \colon [0,1] \otimes X \to \R$ by taking the quantile function,
	\begin{equation}
		g(r,x) \coloneqq \inf \Set{ s  \given  f \bigl( (-\infty,s] \,|\, x \bigr) \geq r }.
	\end{equation}
	We show that $g$ takes values in $Y$. Indeed if $g(r, x) \in \R \setminus Y$, then the facts that $\R \setminus Y$ is open and $f(\ph \,|\, x)$ is supported on $Y$ imply that there is $\eps > 0$ such that $f((-\infty, g(r,x) - \eps] \,|\, x) = f((-\infty, g(r,x)] \,|\, x)$.
	This contradicts the minimality of $g(r,x)$, and hence $g(r,x) \in Y$.
	Note that $g$ is monotone in $r$.

	To verify the desired equation~\eqref{eq:omnirandom}, it is enough to do so on a fixed input $x \in X$ and on a measurable set of the form $(-\infty,s]$.
	Then the right-hand side evaluates to the probability that $g(-,x)$ is at most $s$, which by monotonicity of $g$ is
	\begin{equation}
		\sup \{ r \mid g(r,x) \le s \} = \sup \{ r \mid f((-\infty,s] \,|\, x) \ge r \} = f((-\infty,s] \,|\, x),
	\end{equation}
	as was to be shown.
\end{proof}

\bibliographystyle{plain}
\bibliography{../markov}

\begin{thebibliography}{10}

\bibitem{ackerman2024randomgraphs}
Nate Ackerman, Cameron~E. Freer, Younesse Kaddar, Jacek Karkwowski, Sean Moss, Daniel Roy, Sam Staton, and Hongseok Yang.
\newblock Probabilistic programming interfaces for random graphs: Markov categories, graphons, and nominal sets.
\newblock In {\em Proceedings of the ACM on Programming Languages}, volume~8, pages 1819--1849. ACM, 2024.

\bibitem{aldous1981representations}
David~J. Aldous.
\newblock Representations for partially exchangeable arrays of random variables.
\newblock {\em Journal of Multivariate Analysis}, 11(4):581--598, 1981.

\bibitem{aldous1985exchangeability}
David~J. Aldous.
\newblock Exchangeability and related topics.
\newblock In {\em \'{E}cole d'\'{e}t\'{e} de probabilit\'{e}s de {S}aint-{F}lour, {XIII}---1983}, volume 1117 of {\em Lecture Notes in Math.}, pages 1--198. Springer, Berlin, 1985.

\bibitem{austin2008exchangeable}
Tim Austin.
\newblock On exchangeable random variables and the statistics of large graphs and hypergraphs.
\newblock {\em Probability Surveys}, 5:80 -- 145, 2008.

\bibitem{austin2014hierarchical}
Tim Austin and Dmitry Panchenko.
\newblock A hierarchical version of the de {F}inetti and {A}ldous--{H}oover representations.
\newblock {\em Probability Theory and Related Fields}, 159(3-4):809--823, 2014.

\bibitem{bogachev2007measure}
Vladimir~I. Bogachev.
\newblock {\em Measure theory. {Vol}. {I} and {II}}.
\newblock Berlin: Springer, 2007.

\bibitem{bogachev20kant}
Vladimir~I. Bogachev and Ilya~I. Malofeev.
\newblock Kantorovich problems and conditional measures depending on a parameter.
\newblock {\em Journal of Mathematical Analysis and Applications}, 486(1):123883, 2020.

\bibitem{buntine1994operations}
Wray~L. Buntine.
\newblock Operations for learning with graphical models.
\newblock {\em Journal of Artificial Intelligence Research}, 2:159--225, 1994.

\bibitem{chojacobs2019strings}
Kenta Cho and Bart Jacobs.
\newblock Disintegration and {B}ayesian inversion via string diagrams.
\newblock {\em Math. Structures Comput. Sci.}, 29:938--971, 2019.

\bibitem{dilavore2023evidential}
Elena Di~Lavore and Mario Rom{\'a}n.
\newblock Evidential decision theory via partial {M}arkov categories.
\newblock In {\em Proceedings of LICS}, pages 1--14, 2023.

\bibitem{diaconis2008graph}
Persi Diaconis and Svante Janson.
\newblock Graph limits and exchangeable random graphs.
\newblock {\em Rend. Mat. Appl. (7)}, 28(1):33--61, 2008.
\newblock \href{https://arxiv.org/abs/0712.2749}{arXiv:0712.2749}.

\bibitem{dudley2018real}
Richard~M. Dudley.
\newblock {\em Real analysis and probability}.
\newblock Chapman and Hall/CRC, 2018.

\bibitem{forre2021quasimeasurable}
Patrick Forr\'e.
\newblock Quasi-measurable spaces, 2021.
\newblock \href{https://arxiv.org/abs/2109.11631}{arXiv:2109.11631}.

\bibitem{forre2021conditional}
Patrick Forré.
\newblock Transitional conditional independence, 2021.
\newblock \href{https://arxiv.org/abs/2104.11547}{arXiv:2104.11547}.

\bibitem{fritz2019synthetic}
Tobias Fritz.
\newblock A synthetic approach to {M}arkov kernels, conditional independence and theorems on sufficient statistics.
\newblock {\em Adv. Math.}, 370:107239, 2020.

\bibitem{fritz2022dilations}
Tobias Fritz, Tom{\'a}{\v{s}} Gonda, Nicholas~Gauguin Houghton-Larsen, Antonio Lorenzin, Paolo Perrone, and Dario Stein.
\newblock Dilations and information flow axioms in categorical probability.
\newblock {\em Math. Struct. Comp. Sci.}, 33:913--957, 2023.

\bibitem{fritz2023representable}
Tobias Fritz, Tom{\'a}{\v{s}} Gonda, Paolo Perrone, and Eigil~Fjeldgren Rischel.
\newblock Representable {M}arkov categories and comparison of statistical experiments in categorical probability.
\newblock {\em Theoretical Computer Science}, 961:113896, 2023.

\bibitem{fritz2021definetti}
Tobias Fritz, Tom\'{a}\v{s} Gonda, and Paolo Perrone.
\newblock de {F}inetti's theorem in categorical probability.
\newblock {\em J. Stoch. Anal.}, 2(4), 2021.

\bibitem{fritz2022dseparation}
Tobias Fritz and Andreas Klingler.
\newblock The $d$-separation criterion in categorical probability.
\newblock {\em J. Mach. Learn. Res.}, 24(46):1--49, 2023.

\bibitem{fritz2024hidden}
Tobias Fritz, Andreas Klingler, Drew McNeely, Areeb Shah-Mohammed, and Yuwen Wang.
\newblock Hidden {M}arkov models and the {B}ayes filter in categorical probability, 2024.

\bibitem{fritz2022free}
Tobias Fritz and Wendong Liang.
\newblock Free gs-monoidal categories and free {M}arkov categories.
\newblock {\em Appl. Categ. Structures}, 31(21), 2023.

\bibitem{fritzrischel2019zeroone}
Tobias Fritz and Eigil~Fjeldgren Rischel.
\newblock Infinite products and zero-one laws in categorical probability.
\newblock {\em Compositionality}, 2:3, 2020.

\bibitem{fritz2024causal}
Tobias Fritz and Pedro Ter{\'a}n.
\newblock Causal {M}arkov categories and possibility.
\newblock In {\em Combining, Modelling and Analyzing Imprecision, Randomness and Dependence}, pages 142--148. Springer Nature, 2024.

\bibitem{hoover1982row}
David~N Hoover.
\newblock Row-column exchangeability and a generalized model for probability.
\newblock {\em Exchangeability in probability and statistics (Rome, 1981)}, pages 281--291, 1982.

\bibitem{jung2021generalization}
Paul Jung, Jiho Lee, Sam Staton, and Hongseok Yang.
\newblock A generalization of hierarchical exchangeability on trees to directed acyclic graphs.
\newblock {\em Annales Henri Lebesgue}, 4:325--368, 2021.

\bibitem{kallenberg1997foundations}
Olav Kallenberg.
\newblock {\em Foundations of Modern Probability}.
\newblock Springer, 1997.

\bibitem{kallenberg2005symmetries}
Olav Kallenberg.
\newblock {\em Probabilistic symmetries and invariance principles.}
\newblock Probab. Appl. Springer, 2005.

\bibitem{kechris}
Alexander~S. Kechris.
\newblock {\em Classical descriptive set theory}, volume 156 of {\em Graduate Texts in Mathematics}.
\newblock Springer-Verlag, New York, 1995.

\bibitem{lee2022deFinetti}
Jiho Lee.
\newblock A de {Finetti}-type representation of joint hierarchically exchange.
\newblock {\em Lat. Am. J. Probab. Math. Stat.}, 19(1):925--942, 2022.
\newblock \href{https://arxiv.org/abs/2006.13091}{arXiv:2006.13091}.

\bibitem{lovasz2012large}
L{\'a}szl{\'o} Lov{\'a}sz.
\newblock {\em Large networks and graph limits}, volume~60.
\newblock American Mathematical Soc., 2012.

\bibitem{moss2022ergodic}
Sean Moss and Paolo Perrone.
\newblock A category-theoretic proof of the ergodic decomposition theorem.
\newblock {\em Ergodic Theory Dynam. Systems}, pages 1--27, 2023.

\bibitem{oneill2009exchangeability}
Ben O'Neill.
\newblock Exchangeability, correlation, and {B}ayes' effect.
\newblock {\em International statistical review}, 77(2):241--250, 2009.

\bibitem{orbanz2014bayesian}
Peter Orbanz and Daniel~M. Roy.
\newblock Bayesian models of graphs, arrays and other exchangeable random structures.
\newblock {\em IEEE transactions on pattern analysis and machine intelligence}, 37(2):437--461, 2014.

\bibitem{pearl2009causality}
Judea Pearl.
\newblock {\em Causality}.
\newblock Cambridge University Press, Cambridge, second edition, 2009.
\newblock Models, reasoning, and inference.

\bibitem{perrone2024markov}
Paolo Perrone.
\newblock Markov categories and entropy.
\newblock {\em Transactions on Information Theory}, 70(3), 2024.

\bibitem{pvb2024martingales}
Paolo Perrone and Ruben Van~Belle.
\newblock Convergence of martingales via enriched dagger categories, 2024.
\newblock \href{https://arxiv.org/abs/2306.03057}{arXiv.2404.15191}.

\bibitem{towsner2023AH}
Henry Towsner.
\newblock An {A}ldous--{H}oover {T}heorem for {R}adon distributions, 2023.
\newblock \href{https://arxiv.org/abs/2306.03057}{arXiv:2306.03057}.

\end{thebibliography}

\end{document}